\theoremstyle{plain}
\newtheorem{hyp}{Assumption}[section]
\newtheorem{theorem}{Theorem}[section]
\newtheorem{lem}[theorem]{Lemma}
\newtheorem{corollary}[theorem]{Corollary}
\newtheorem*{hyp*}{Assumption}
\theoremstyle{remark}
\newtheorem{example}[theorem]{Example}
\newtheorem{remark}[theorem]{Remark}
\newtheorem{rem}[theorem]{Remark}
\newcommand{\cc}{{\mathcal C}}
\newcommand{\ck}{{\mathcal K}}
\newcommand{\cK}{{\mathcal K}}
\newcommand{\cq}{{\mathcal Q}}
\newcommand{\Tau}{{\mathrm{Test}}} 
\newcommand{\sparse}{{ s }}
\newcommand{\E}{{\mathbb E}}
\newcommand{\N}{{\mathbb N}}
\renewcommand{\P}{{\mathbb P}}
\newcommand{\R}{{\mathbb R}}
\newcommand{\Z}{{\mathbb Z}}
\newcommand{\rd}{{\rm d}}
\newcommand{\Leb}{{\rm Leb}}
\newcommand{\varmodel}{{\Xi_T}}
\newcommand{\ind}{{\bf 1}}
\newcommand{\scale}{{\overline{\sigma}}}
\newcommand{\inv}[1]{\mathop{\frac{1}{ #1}}\nolimits}
\newcommand{\expp}[1]{\mathop {\mathrm{e}^{ #1}}}
\newcommand{\dT}{\mathfrak{d}_{T}}
\newcommand{\DT}{\mathcal{V}_{T}}
\newcommand{\CT}{C_T}
\newcommand{\tD}{\tilde D}
\newcommand{\kernel}{h}
\newcommand{\Funk}{F}
\newcommand{\Var}{{\rm Var}}
\newcommand{\sgn}{{\rm sgn}}
\newcommand{\coeff}{{\xi}}
\newcommand{\norm}[1]{{\left\lVert #1 \right\rVert}}
\newcommand{\threshold}{t}
\newcommand{\Sep}{\Sigma(\eta,r,s)}
\newlist{propenum}{enumerate}{1} 
\setlist[propenum]{label=(\roman*)}
\begin{document}

\begin{frontmatter}

\title{Off-the-grid prediction and testing for linear combination of translated features}
\runtitle{Off-the-grid prediction and testing}



\author{\fnms{Cristina}~\snm{Butucea$^1$}\ead[label=e1]{cristina.butucea@ensae.fr}},
\author{\fnms{Jean-François}~\snm{Delmas$^2$}\ead[label=e2]{jean-francois.delmas@enpc.fr}}
\author{\fnms{Anne}~\snm{Dutfoy$^3$}\ead[label=e3]{anne.dutfoy@edf.fr}}
\and
\author{\fnms{Cl\'ement}~\snm{Hardy$^2$}\ead[label=e4]{clement.hardy@enpc.fr}}
\address{$^1$CREST, ENSAE, IP Paris, France, \printead{e1}}

\address{$^2$CERMICS, \'{E}cole des Ponts, France, \printead{e2,e4}}
\address{$^3$EDF R\&D, Palaiseau, France, \printead{e3}}

\runauthor{Butucea, Delmas, Dutfoy, Hardy}

 \begin{abstract}
	We consider a model where a signal (discrete or continuous) is observed with an additive  Gaussian noise process. The signal is issued from a linear combination of a finite but increasing number of translated features. The features are continuously parameterized by their location and depend on some scale parameter. First, we extend previous prediction results for off-the-grid estimators by taking into account here that the scale parameter may vary. The prediction bounds are analogous, but we improve the minimal distance between two consecutive features locations in order to achieve these bounds. 
	
	Next, we propose  a goodness-of-fit test for the  model and give
        non-asymptotic  upper bounds  of  the testing  risk  and of  the
        minimax separation rate between two distinguishable signals.  In
        particular,   our   test   encompasses  the   signal   detection
        framework.  We  deduce  upper  bounds  on  the  minimal  energy,
        expressed as  the $\ell_2$-norm  of the linear  coefficients, to
        successfully detect a  signal in presence of  noise. The general
        model considered in this paper  is a non-linear extension of the
        classical high-dimensional regression model.  It turns out that,
        in this  framework, our  upper bound  on the  minimax separation
        rate matches (up to a logarithmic factor) the lower bound on the
        minimax  separation  rate  for  signal  detection  in  the  high
        dimensional  linear model  associated to  a fixed  dictionary of
        features.   We also  propose  a procedure  to  test whether  the
        features  of  the  observed  signal belong  to  a  given  finite
        collection under the assumption that the linear coefficients may
        vary,  but  have  prescribed  signs under  the  null
        hypothesis. A non-asymptotic upper bound  on the testing risk is
        given.
	
	We illustrate our results on the spikes deconvolution model with Gaussian features on the real line and with the Dirichlet kernel, frequently used in the compressed sensing literature, on the torus.
\end{abstract}
\begin{keyword}[class=MSC2020]
	\kwd[Primary ]{62G05}
\kwd{62G10}
\kwd[; secondary ]{62G08}
\end{keyword}

\begin{keyword}
\kwd{Goodness-of-fit testing}
\kwd{Mixture model}
\kwd{Non-linear regression model}
\kwd{Non-parametric hypotheses testing}
\kwd{Off-the-grid methods}
\kwd{Spikes deconvolution}
\end{keyword}



\end{frontmatter}

\section{Introduction}
\label{sec:intro}

In many fields, a signal of interest can be described as a linear combination of shifted source signals having the same shape. Thus, the source signal is supposed to belong to a parametric set of functions (for example, Gaussian, Cauchy or sinusoidal-shaped functions) parameterized by its location parameter. The signal is observed with an additive noise process in discrete or continuous time. We assume that the noise and the observation space can vary with some parameter $T$ increasing with the quality of the observations. 

For example, the chemical analysis of a material is done through spectroscopy and each chemical component is represented by a spiked Gaussian-shaped signal located at some prescribed frequency, see \cite{butucea2021}. The final signal is a linear combination of such spikes. In multiple source detection, sound or image may present a similar structure. 



More general non-linear models (not necessarily location models) for the
features  have been  discussed in  \cite{butucea22}, and  the particular
case   of  location   families   has  been   discussed   in  Section   8
therein.  However, we  allow  here the  features to  depend  on a  scale
parameter which  varies with  $T$. This makes  the proof  technique very
different from the previous one.

We  are interested  in estimating  both the  coefficients of  the linear
combination  and  the  location  parameters of  the  different  features
appearing  in the  signal. We  give  sufficient conditions  in order  to
obtain upper bounds for the quadratic  prediction risk of the same order
as  if  the  non-linear  parameters  were  known.  We  show  that  these
sufficient conditions are milder  than those in \cite{butucea22} without
loosing on the prediction risk bounds.

We  are also  interested in  testing problems.  First, we  want to  test
whether the observations  are issued from a given  linear combination of
features. We remark that it includes the case of signal detection.  This
test problem finds an application in spectroscopy to detect the presence
of a  chemical compound  in a  material. Finally,  we are  interested in
testing whether the observed signal  is a linear combination of features
located at a  prescribed list of values with  linear coefficients having
prescribed  signs under  the null  hypothesis.  This is  of interest  in
spectroscopy:   in   a  material   we   expect   a  list   of   chemical
components. This test problem detects  ageing or important damage to the
material which  can be  detected if  unexpected chemical  components are
present.

\subsection{Model}

Let $T  \in \N$. We  observe a random element  $y$ in the  Hilbert space
$L^2(\lambda_T)$  of square  integrable  functions with  respect to  the
measure  $\lambda_T$   on  the  Borel  $\sigma$-field   of  some  metric
space. The observation is the sum  of a deterministic signal and a noise
process  $w_T$ in  $L^2(\lambda_T)$. We  assume  that the  signal is  an
unknown linear  combination of a  finite unknown number $s$  of features
belonging     to     a      continuously     parameterized     subfamily
$(\varphi_T(\theta), \, \theta \in \Theta)$ of $L^2(\lambda_T)$. We call
this  family  a  continuous  dictionary,   the  weights  of  the  linear
combination  -  the  linear  coefficients, and  the  parameters  of  the
features - the non-linear parameters. Moreover, we assume that the noise
is a Gaussian random process. Thus, the general model is fully specified
by the choice of the Hilbert space of our observation, of the continuous
dictionary of features and of the noise process.

The Hilbert space $L^2(\lambda_T)$ is endowed with the natural scalar
product noted $\left \langle \cdot, \cdot \right
\rangle_{L^2(\lambda_T)} $ and norm $\norm{\cdot}_{L^2(\lambda_T)}$. Let
us define the normalized function $\phi_T$ defined on $\Theta$ by:  
\begin{equation}
\label{eq:def-phi_T}
\phi_{T}(\theta)=\varphi_T(\theta) /\norm{\varphi_T(\theta)}_{L^2(\lambda_T)}.
\end{equation} 
We assume that the signal is  a linear combination with unknown non-zero
linear  coefficients $\beta^{\star}  = (\beta_1^*,\ldots,\beta_s^*)$  in
$(\R^*)^s$ of  an unknown  number  $s \in  \N$ of  active
features     with     unknown     distinct     non-linear     parameters
$\vartheta^\star    =(\theta_1^\star,   \ldots,    \theta_s^\star)   \in
\Theta^s$. 
{We use  the notation $\R^*=\R\backslash  \{0\}$. }

Thus, we observe $y$ in the model:
\begin{equation}
\label{eq:model}
y = \sum_{k=1}^s \beta_k^\star \Phi_T(\theta_k^\star) + w_T \quad  \text{in $L^2(\lambda_T)$}.
\end{equation}
Let us define the multivariate function $\Phi_T$  on $\Theta^s$ by:
\[
\Phi_{T}(\vartheta)= (	\phi_{T}(\theta_{1}), \ldots,
\phi_{T}(\theta_{s}) )^\top 
\quad\text{for}\quad
\vartheta = \left ( \theta_1,\ldots,\theta_s\right ) \in \Theta^s.
\] 
Model \eqref{eq:model} writes 
\[
y = \beta^\star \Phi_T(\vartheta^\star) + w_T \quad  \text{in $L^2(\lambda_T)$}.
\]
When       $s=0$,       we       set      by       convention       that
$\beta^\star \Phi_T(\vartheta^\star)=0$ as well  as $ A^s=\{0\}$ for any
set              $A$.              We             denote              by
$\cq^{\star} = \{\theta_\ell^\star, \, 1 \leq \ell \leq s \}$ the set of
the non-linear  parameters associated  to active features. 

\medskip

In this paper we consider a dictionary given by a one
  dimensional location model scaled with a given $\sigma_T>0$:
\begin{equation}
\label{eq:def-features}
\Big(\varphi_T(\theta) = \kernel(\theta - \cdot,\sigma_T), \, \theta \in \Theta\Big)
\end{equation}
where the  set  $\Theta$ is the  real line $\R$ or
the  torus $\R/\Z$, the real-valued function $\kernel$ is
defined  on $\Theta  \times \mathfrak{S}$,  smooth with  respect to  its
first        variable        and        normalized        so        that
$\norm{h(\cdot,\sigma_T)}_{L^2(\Leb)} =1$, and  $\sigma_T$ is an element
of  the  set  $\mathfrak{S}$  of  admissible  positive  scale  parameter
values. Note  that $\varphi_T$  depends on $T$  only through  the argument
$\sigma_T$.   See  Section  \ref{sec:features-spaces}  for  examples  of
functions  $\kernel$  including  the   Gaussian  scaled-spikes  and  the
low-pass filter.

{ The  process   $y$  is  observed   over  the  support  of   the  measure
$\lambda_T$.  Therefore  it  is  legitimate  to  consider  models  whose
location parameters belong to the smallest interval covering the support
of the measure $\lambda_T$.   Hence, we  introduce the  set $\Theta_T$,  a compact
interval of $\Theta$ (when $\Theta$ is the torus, then we can take
$\Theta_T=\Theta$),  and we shall assume that $\cq^\star$  is a subset
of $\Theta_T$.  We denote by  $|\Theta_T|$ the Euclidean diameter of the
set $\Theta_T$.}

We consider a large variety of Gaussian noise processes. Indeed, we only assume the following mild assumption on $w_T$, where the decay rate  $\Delta_T>0$ controls the noise variance decay as the parameter $T$ grows and $\scale >0$ is the
intrinsic noise level. A wide range of noise processes satisfy our assumptions, see Section~\ref{sec:example}; they can be discrete or continuous, white or coloured under these constraints.
\begin{hyp}[Admissible  noise] \label{hyp:bruit}  Let $T  \in \N$.   The Gaussian
	noise process  $w_T$ satisifies $\mathbb{E}\left[ \norm{w_T}_{L^2(\lambda_T)}^4\right] < + \infty$,  and there exist  a noise
	level  $\scale>0$ and  a decay  rate  $\Delta_T>0$ such  that for  all
	$f\in  L^2(\lambda_T)$,  the  random  variable $\langle  f,w_T  \rangle_{L^2(\lambda_T)}$  is  a
	centered Gaussian random variable satisfying:
	\begin{equation}
  \label{eq:ineq-sharp}
	\Var \left( \langle f,w_T  \rangle_{L^2(\lambda_T)} \right) \leq \scale^2 \,
	\Delta_T\,  \norm{f}_{L^2(\lambda_T)}^2.
	\end{equation}
\end{hyp}

We            assume             that            the            quantity
$\mathbb{E}\left[ \norm{w_T}_{L^2(\lambda_T)}^2\right]$ is known for the
considered models.
Using Cauchy-Schwarz inequality, we get:
\begin{equation}
  \label{eq:ineq-not-sharp}
  \Var \left( \langle f,w_T  \rangle_{L^2(\lambda_T)} \right) \leq
  \E\left[\norm{w_T}^2_{L^2(\lambda_T)}\right]\,  \norm{f}_{L^2(\lambda_T)}^2,
\end{equation}
which is in some examples not as sharp as~\eqref{eq:ineq-sharp}, see
Section~\ref{sec:continuous_noise}. 
We shall  also consider the  finite variance  of the
squared norm of the noise:
\begin{equation}
\label{eq:def:noise-variance}
\varmodel = \Var \left( \norm{w_T}_{L^2(\lambda_T)}^2\right).
\end{equation}

To sum up, the quality of the information provided by our observation $y$ depends on the support of the measure $\lambda_T$ and on the noise $w_T$ through $\Delta_T$. It increases with the parameter $T$.  Due to the particular form of the features, we refer to our model as a Linear combination of translation features (LCTF-model).

In this paper, we are interested both in building estimators $\hat \beta$ and $\hat \vartheta$ of the parameters $\beta^\star$ and $\vartheta^\star$, respectively, and in hypothesis testing problems concerning our model. Our goal is two-fold: on the one hand, we attain best known non asymptotic prediction bounds for the risk measure:
$$
\|\hat \beta \Phi_T(\hat \vartheta) - \beta^\star \Phi_T(\vartheta^\star )\|_{L^2(\lambda_T)}
$$
under less restrictive conditions than previous works. Moreover, we use
the certificate functions designed as tools in these proofs in order to
build test procedures in our model that generalize the signal detection
problem in a linear regression model. On the other hand, we treat the
goodness-of-fit test problem and then, the more general problem of
testing whether the signal in our observation presents only features
included in a prescribed list, {with associated linear coefficients that may vary but cannot change signs.}

\subsection{Previous work} 
Estimating the linear coefficients and the parameters of model \eqref{eq:model} from an observation $y$ has attracted a lot of attention over the past decade. A major contribution in this field comes from the formulation of the BLasso problem in \cite{de2012exact}. This optimization problem on a space of measures allows to estimate both linear coefficients and non-linear parameters without using a grid on the parameter space. This off-the-grid method has successfully  been used in \cite{candes2014towards} and \cite{candes2013super} in the context of super-resolution as well as in \cite{duval2015exact} for spikes deconvolution. High probability bounds for the prediction error have been given in \cite{tang_compressed_sensing}, \cite{tang2014near} and \cite{boyer2017adapting} for the specific dictionary of complex exponential functions continuously parameterized by their frequencies and more recently in \cite{butucea22} for a wide range of dictionaries parameterized over a one-dimensional space. These results are based on certificate functions whose existence have been proven in a very general framework in \cite{poon2018geometry} provided that the non-linear parameters of the mixture are well-separated with respect to a Riemannian metric.

\medskip

Goodness-of-fit tests are used to check whether observations are indeed derived from a given statistical model. We refer to the monograph \cite{ingsterbook2003} for a comprehensive presentation of goodness-of-fit testing. When we consider a finite dictionary of features $(\varphi_T(\theta), \theta \in \cq)$ with $\cq$ a known finite subset of $\Theta$, the model \eqref{eq:model} can be rewritten as a linear regression model, possibly of high dimension depending on the size of the finite dictionary $p := \operatorname{Card}(\cq)$.  In this case, testing the goodness-of-fit of the model amounts to testing whether the linear coefficients in the mixture are equal to some given linear coefficients. When the dictionary is known, the testing problem is homogeneous in the linear coefficients $\beta$ and is therefore equivalent to testing $\beta \equiv 0$, which is a signal detection problem.  

Signal detection has raised a lot of interest over the past decades. It is well known that the alternative hypothesis $H_1$ (presence of signal) must be well separated from the null hypothsesis $H_0$ (only noise) in order to have tests with small risks. The separation can be seen as a minimal signal intensity allowing the detection. Then, it is a matter of interest to evaluate the minimax separation rate, i.e., the smallest separation that  allows to distinguish the tested hypotheses.  
In \cite{Ermakov90},  asymptotic rates for the minimax separation in the framework of signal detection are derived for the non-parametric Gaussian white noise model. Non-asymptotic rates were then derived in 
\cite{Baraud02} and later in  \cite{Laurent12} to tackle the case of heterogeneous variances. 
We refer to the monograph \cite{GineNickl} for an overview of non-parametric hypotheses testing. Regarding the high dimensional regression model where the observation is of dimension $T$ and the dictionary is fixed, known and of size $p$, the work of \cite{Ingster2010} established the following asymptotic minimax separation rates under coherence assumptions on the dictionary:
$$
\frac{1}{T^{\frac{1}{4}}} \wedge \sqrt{\frac{s}{T}\log (p)} \wedge \frac{p^{\frac{1}{4}}}{\sqrt{T}} \cdot
$$
The signal intensity is expressed by the $\ell_2$-norm of the linear coefficients. Their lower bounds on the asymptotic minimax separation stand for both fixed and random designs whereas their upper bounds stand for random designs. The work of \cite{Arias-Castro11} does not tackle the high dimension but provides tests achieving the minimax separation for fixed designs under coherence assumptions on the dictionary. We note that the existing results do not apply to our context. 

For the non-linear extension of linear regression models that we consider here, goodness-of-fit testing does not reduce to signal detection as the mixture is not homogeneous with respect to the non-linear parameters. Therefore, we introduce new testing procedures. We stress that one of the test statistics is not derived from estimators of the linear coefficients. In fact, depending on the sparsity of the signal, the dimension of the observation and the size of the dictionary, plug-in methods using sparse estimators might not be the best way to proceed. They do not always lead to the minimal separation. In this sense, testing is a very different statistical problem from estimation.

\subsection{Description of the results} 

{The   aim  of   this  paper}   is   twofold.  First,   we  improve   on
\cite{butucea22}  in  the  case  of  linear  combination  of  translated
features  by  giving  bounds  on   the  prediction  error  under  milder
separation  constraints between  the {unknown} non-linear
parameters in $\cq^\star$. Indeed,  the sufficient separation conditions
between two neighboring non-linear  parameters 
are difficult to track explicitly. In all generality, they can be rather
restrictive  and scale  with  a factor  $s$  for arbitrary  dictionaries
satisfying  the conditions.  In the  particular case  of Gaussian-shaped
features,  more  explicit  calculations  are possible  and  the  minimal
separation reduces to some constant value.

In this paper, due to the shape of our dictionary of features, {\it i.e.} a location model scaled by some $\sigma_T$, we get more explicit sufficient separation conditions which are less restrictive. This is achieved by taking the scale parameter of the features $\sigma_T$ into account. In particular, in the case of Gaussian-shaped features, the minimal separation is of order $\sigma_T$. Intuitively, this is can be explained by the fact that for peaked features (with small scaling parameter $\sigma_T$) we may distinguish spikes located at smaller (by a factor $\sigma_T$) distance.

The second goal of this paper is to study hypotheses  testing problems in  these models.   We give
procedures  for the  goodness-of-fit of  the mixture  model in  order to
determine          whether          the          unknown          signal
$\beta^\star  \Phi_T(\vartheta^\star)$ is  equal to  a reference  signal
$\beta^0\Phi_T(\vartheta^0)$      for       some      known      vectors
$\beta^0      \in       (\R ^*)^{s^0}$      and
$\vartheta^0 \in  \Theta_T^{s^0}$. Under  our assumptions, the  model is
identifiable, thus  the null  hypothesis is  equivalent to  testing that
$\beta^\star, \vartheta^\star$  coincide with $\beta^0,  \vartheta^0$ up
to  a permutation.   This setup  includes the  case of  signal detection
where the null  hypothesis is $\beta^\star \equiv 0$, that  is $s=0$ On this aspect, our minimal intensity rates allowing signal detection are similar up to a log factor  to the rates obtained in \cite{Ingster2010} for high dimensional linear models. We
propose a combined procedure based  on differences between the reference
signal $\beta^0 \Phi_T(\vartheta^0)$ and either the observation $y$ or a
reconstructed   signal   obtained   from   estimators   of   the   model
parameters. In  order to successfully  perform the test, we  remove from
the alternative hypothesis the signals  whose proximity to the reference
signal $\beta^0\Phi_T(\vartheta^0)$ is  below some separation parameter,
with  respect to  the  norm $\norm{\cdot}_{L^2(\lambda_T)}$.  We give  a
non-asymptotic upper bound of the testing risk and deduce an upper bound
on  the   minimal  separation   needed  to  distinguish   two  different
signals.  This upper  bound yields  two  regimes according  to the  test
procedures that  we define and study.  In the case of  signal detection,
the  separation can  be expressed  as  the $\ell_2$-norm  of the  linear
coefficients  of   the  observed   mixture.  In  particular,   when  the
observation $y$ is  issued from a non-linear extension  of the classical
high-dimensional  regression  model,  our  upper bound  matches  (up  to
logarithmic  factors)   the  asymptotic  lower  bound   of  the  minimal
separation  needed  to  distinguish  two signals  that  are  mixture  of
features from a finite high-dimensional dictionary.

Moreover, we test  the presence of at most $s_0$  prescribed features in
the mixture with  arbitrary linear coefficients of given  sign. That is,
we  test  whether  for  each  $\epsilon\in \{+,  -\}$  the  unknown  set
$\cq^{\star,   \epsilon}=   \{\theta^\star_k\in   \cq^\star\,   \colon\,
\epsilon \beta^\star_k>0  \}$ is a  subset of $\cq^{0,  \epsilon}$, with
$\cq^{0,  +}$  and  $  \cq^{0,-}$   being  given disjoint  finite  subsets  of
$\Theta_T$.  This  setup is issued  from an application  to spectroscopy
(see  \cite{butucea2021}),   where  the   presence  of   other  chemical
components than the prescribed ones are indicating ageing or substantial
modifications of the analyzed material.  To separate the null hypothesis
from the  alternative hypothesis, we  introduce a discrepancy that  is 0
{if      and      only      if}      the      parameters
$(\beta^\star, \vartheta^\star)$ belong to the null hypothesis.  We give
an upper  bound on  the minimal separation  to successfully  perform our
test.  The test  statistic introduced and studied in  this context makes
explicit  use   of  the   construction of certificates   used  in compressed sensing  \cite{candes2011probabilistic,tang_compressed_sensing,poon2018geometry}, super resolution  \cite{candes2014towards}, spikes deconvolution \cite{duval2015exact},    as well as in \cite{butucea22,tang2014near,boyer2017adapting}  for
establishing    the   prediction    rates   of    the   estimators    of
$(\beta^\star,  \vartheta^\star)$.  We stress  the  fact  that the  test
statistic is not an estimator  of the discrepancy measure separating the
null  and  the  alternative  hypotheses,  as  is  usually  the  case  in
non-parametric tests.

\subsection{Roadmap of the paper} Section~\ref{sec:exmaple_models} gives
several  possible  specific choices  in  our  general model  by  showing
examples  of dictionaries  of  features, of  observation  spaces and  of
Gaussian processes (white or coloured under our assumptions). In Section
\ref{sec:assumptions}, we start by  presenting the assumptions needed to
perform a successful estimation of  the linear coefficients and location
parameters  of our  model. After  giving a  prediction bound  in Theorem
\ref{maintheorem}, we  show in Lemma \ref{lem:identifiability}  that the
required assumptions  are sufficient conditions for  the identifiability
of the model. In Section  \ref{sec:goodness-of-fit}, we test whether the
observation  derives from  a given  mixture or  from some  other mixture
sufficiently   separated  from   the   latter.  We   give  in   Theorems
\ref{th:non_sparse}  and  \ref{th:higly_sparse}  bounds of  the  testing
risks  associated  to   two  different  test  procedures.   We  show  in
Corollaries \ref{th:non_sparse_R} and \ref{th:higly_sparse_R} that these
two tests give two regimes for our upper bound on the minimal separation
to distinguish two different signals from an observation contaminated by
noise.  We  also provide  a discussion  on the  comparison of  our upper
bounds    with    some    existing    lower    bounds.     In    Section
\ref{sec:goodness-of-fit-features},  we  propose  a  procedure  to  test
whether the  active features in  the observed  signal belong to  a given
finite  collection with  linear coefficients  of prescribed  signs. Both
hypotheses of this  test problem are composite and a  new measure of the
separation between  these hypotheses  has been introduced.  The proposed
test relies on  the certificates used in the  proof of the prediction
bounds in  an original  way. A  bound of  the testing  risk is  given in
Theorem       \ref{theorem:inclusion}       and       in       Corollary
\ref{theorem:inclusion_R},  we  provide  an  upper  bound  on  the  minimax
separation rate.   The examples of Gaussian  scaled spikes deconvolution
on  $\R$ and  low-pass  filter  on $\R  /\Z$  are  adressed in  Sections
\ref{ex:gaussian-spike} and  \ref{sec:low-pass-filter}. Some  proofs can
be found in Section~\ref{sec:proofs}.

\section{Specific models covered by our general model}
\label{sec:exmaple_models}
We consider a large variety of models: discrete models where the process $y = (y(t_1),\ldots,y(t_T))$ is observed on a finite grid $t_1<\ldots < t_T$ or continuous models where the process $y= y(t)$ is observed on a continuous interval.

\subsection{Examples of feature functions}
\label{sec:features-spaces}

Various continuous dictionaries of features can be considered under regularity conditions required later on. They include many parametric families of functions known in statistics and compressed sensing literature.
\begin{enumerate}
	\item \textit{Gaussian scaled-spikes deconvolution.} The noisy linear combination of translated and re-scaled Gaussian features corresponds to:
	\begin{equation}
	\label{eq:def:gaussian-features}
	\kernel(t,\sigma) \mapsto  \frac{ \exp(-t^2/2\sigma^2) }{\pi^{1/4} \sigma ^{1/2} }
	\quad\text{on $\Theta \times \mathfrak{S} = \R \times \R_+^*$}.
	\end{equation} 
	The example of Gaussian spikes deconvolution is analyzed in full details in \cite[Section 8]{butucea22} when $\sigma_T$ does not depend on $T$. We shall consider here that the scale parameter $\sigma_T$ may vary with $T$.
	\item \textit{Multi-resolution approximation.}  We consider
	the  normalized Shannon  scaling function:
	\[
	\kernel(t,\sigma) \mapsto  \sqrt{\sigma}\,  \frac{\sin(\pi t/\sigma)}{\pi t}
	\quad\text{on $\Theta \times \mathfrak{S} = \R \times \R_+^*$}.
	\]
	The associated dictionary allows to recover functions whose Fourier transform have their support in $[-\pi/\sigma, \pi / \sigma]$ (see \cite[Theorem 3.5]{mallat2008wavelet}).

	\item \textit{Low-pass filter.}
	We consider the normalized Dirichlet kernel on the torus for some cut-off frequency $f_c \in \N^*$ and $T=2f_c+1$: 
	\begin{equation}
	\label{eq:low-pass-h}
	\kernel(t,\sigma) = \frac{1}{\sqrt{T}}\sum_{k=-f_c}^{f_c} \expp{2 i\pi k t} = \frac{\sin(T \pi  t)}{\sqrt{T} \, \sin(\pi t)},
	\end{equation}
	with  $\sigma = 1/T$, $T \in 2 \N^* +1$ and $t\in \Theta  = \R / \Z $.
	The example of the low-pass filter is adressed in \cite{duval2015exact}, where exact support recovery results are obtained for the BLasso estimators. This dictionary is also used in \cite{candes2013super} in the context of super-resolution. Bounds on some prediction risks (different from those considered in this paper) are established therein for estimators obtained by solving the constrained formulation of the BLasso.
\end{enumerate}

\subsection{Examples of observation spaces and Gaussian noise processes}
\label{sec:example}

We consider both discrete-time and continuous-time processes in our general model.

\subsubsection{Discrete-time process observed on a regular grid}
\label{sec:discrete_noise}
Consider  a  real-valued process  $y$  observed  over a  regular  grid
$t_1<\ldots< t_T$ of a symmetric interval $[-a_T, a_T] \subset \R$, with $T\geq
1$,  $t_j= -a_T + j \Delta_T $ for $j=1, \ldots, T$ and grid step:
$\Delta_T=2 a_T/{T}$. We set:
\begin{equation}
\label{eq:def-lambdaT-reg-grid}
\lambda_T = \Delta_T\sum_{j=1}^T \delta_{t_j}
\end{equation}
Then, we see $y$ as an element of $L^2(\lambda_T)$.
We have for any function $f \in L^2(\lambda_T)$ that
$\norm{f}_{L^2(\lambda_T)}=\sqrt{\Delta_T}  \norm{f}_{\ell_2}$, where  the right-hand  side is
understood     as     the     $\ell_2$-norm (Euclidean norm)      of     the     vector
$(f(t_1), \ldots, f(t_T))$.

We assume that $(a_T, T\geq 2)$  is a sequence of positive numbers, such
that:   $\lim_{T\rightarrow    \infty   }   a_T   =    +   \infty$   and
$\lim_{T\rightarrow  \infty }  \Delta_T =  0$  so that  the sequence  of
measures $(\lambda_T,  T \geq  1)$ converges with  respect to  the vague
topology  towards the  Lebesgue measure,  noted $\Leb$,  on $\R$.   When
$\Theta=\R$,  it  is  therefore  natural   in  this  case,  to  consider
non-linear parameters  within the support  of the observations  and take
$\Theta_T = [-a_T,a_T]$. When $T$ tends  to infinity, in the limit model
the  observation  corresponds  to  a square  integrable  random  process
indexed on $\Theta = \R$.  In the  case of periodic signals, we may take
the sets  $\Theta$ and  $\Theta_T$ to be  the torus $\R  / \Z$,  and the
limit  measure is  then the  Haar measure  identified with  the Lebesgue
measure.

In this formalism, the  noise $w_T\in L^2(\lambda_T)$ is
given by: 
\begin{equation}
\label{eq;def-wT-reg-grid}
w_T(t)=\sum_{j=1}^T G_j \ind_{\{t_j\}}(t),
\end{equation}
where $\ind_A$ 
denotes the indicator function of an arbitrary set $A$ and
$(G_1,\cdots,G_T)$ is a centered Gaussian random vector with independent
entries of variance $\scale^2$.

In this case Assumption~\ref{hyp:bruit} 
 holds with an equality in~\eqref{eq:ineq-sharp} and
 $\E[\norm{w_T}^4_{L^2(\lambda_T)}]$
 is finite. 
{Notice  that $\E[\norm{w_T}^2_{L^2(\lambda_T)}]=\scale^2 \Delta_T
\, T$, thus the Cauchy-Schwarz inequality~\eqref{eq:ineq-not-sharp} gives an upper bound larger by a factor $T$ than the value given by~\eqref{eq:ineq-sharp}.}
We also have  that $\varmodel = 2 \scale^4 \Delta_T^2 \, T$.

Finally, the model writes:
	$$
	y_j := y\left( t_j \right) = \sum_{k=1}^s \beta^\star_k \, \phi_T \left(\theta_k^\star,  t_j \right) + G_j,\quad j=1,\ldots, T.
	$$
	We stress that when the noises $(G_j)_{1 \leq j \leq T}$ are independent the model encompasses the Gaussian sequence model where the mean vector is the sampling of  a linear combination of shifts of a known function.


\subsubsection{Continuous-time processes}
\label{sec:continuous_noise}
Assume we  observe a real-valued process  $y$ on  a topological  state
space.  We note $\lambda=\lambda_T$ for a $\sigma$-finite measure on the
state   space.    In   this   framework,    $y$   is   an   element   of
$L^2(\lambda)$.     Let    us     assume    that     the    noise     is
$w_T=\sum_{k\in    \N}   \sqrt{\xi_k}    \,    G_k\,   \psi_k$,    where
$(G_k, k\in \N)$ are independent centered Gaussian random variables with
variance  $\scale^2$,  $\psi=(\psi_k,  k  \in   \N)$  an  orthonormal
sequence   of
$L^2(\lambda)$, and   $\xi=(\xi_k, k\in \N)$  a
summable sequence of non-negative real numbers. The sequences $\psi$ and
$\xi$  may depend  on $T$.  Let $\norm{\xi}_{\ell_p}$  denote the  usual
$\ell_p$-norm of the sequence $\xi$.  We have:
\[
\Var( \langle f , w_T\rangle_{L^2(\lambda)})=\scale^2 \sum_{k\in \N} \xi_k\,
\langle f, \psi_k \rangle_{L^2(\lambda)}^2  
\leq \scale^2 \,\Delta_T\,   \norm{f}^2_{L^2(\lambda)},
\]
with $\Delta_T=  \norm{\xi}_{\ell_\infty }=\sup_{k \in \N} \xi_k$. 
We also have $\mathbb{E}[\norm{w_T}^2_{L^2(\lambda)}] =\scale^2
\norm{\xi}_{\ell_1}$
 and
 $\varmodel = \Var(\norm{w_T}_{L^2(\lambda)}^2) = 2 \scale^4
 \norm{\xi}_{\ell_2}^2$. 
 In particular Assumption~\ref{hyp:bruit} holds.
 
\medskip
 

We may consider different choices for $\xi$ that lead to different
values for $\varmodel$, the variance of the squared norm of  the
noise. For instance, our framework encompasses the truncated white noise
by taking for all $k \in \N$, $\xi_k = T^{-1}\ind_{ \{1 \leq k \leq T
  \}}$. In this case, we have
$\norm{\xi}_{\ell_\infty }=1/T$ and $\norm{\xi}_{\ell_1}=1$. In
particular, we get that the inequality~\eqref{eq:ineq-not-sharp} is not
as sharp as~\eqref{eq:ineq-sharp}
since $\Delta_T=1/T$ whereas 
$\mathbb{E}[\norm{w_T}^2_{L^2(\lambda)}] =\scale^2$.



\section{Assumptions and prediction bounds}
\label{sec:assumptions}
We recall in this section assumptions and definitions from Sections 3-5 of \cite{butucea22} in a simpler way adapted to our framework. In \cite{butucea22}, the authors established high probability bounds for prediction and estimation errors associated to some estimators of $\beta^\star$ and $\vartheta^\star$ tackling a wider range of dictionaries.

\subsection{Regularity of the features}
We gather in this section the hypotheses that will be required  on the features defined by \eqref{eq:def-features}.

Recall that  the parameter  space $\Theta$  is either  $\R$ or the   torus 
$\R  /  \Z$   endowed  with  the  Lebesgue  measure
$\Leb$.  For convenience,  we write  $|x-y|$ for  the  Euclidean  distance
between $x$ and $y$  either on  $\R$ or   on  the
torus. Recall also that $L^2(\lambda_T)$ and $L^2(\Leb)$ are the sets of
square integrable  functions on  $\Theta$ with  respect to  the measures
$\lambda_T$ and $\Leb$ respectively. We denote $\mathfrak{S}$ the set of
scale parameter values.
\begin{hyp}[Smoothness of the features]
	\label{hyp:reg-f} 
	Let $\kernel$ be a function defined on $\Theta\times \mathfrak{S}$. Let $T \in \N$ and $\sigma_T \in \mathfrak{S}$. We assume that the function $\theta \mapsto h(\theta,\sigma_T)$  is 
	of class    $\cc^3$ on $\Theta$.  We assume furthermore that
        $\norm{\kernel(\cdot,\sigma_T)}_{L^2(\Leb)}=1$, and that for all
        $\theta \in \Theta$ $\norm{\kernel
          (\theta-\cdot,\sigma_T)}_{L^2(\lambda_T)}    >   0$  and all
        $ i \in \{0,\cdots,3\}$: 
        \[
          \norm{\partial_\theta^{i}\kernel(\cdot,\sigma_T)}_{L^2(\Leb)}
          < + \infty
          \quad\text{and}\quad
          \norm{\partial_\theta^{i} \kernel  (\theta-\cdot,
            \sigma_T)}_{L^2(\lambda_T)} < + \infty.
        \]
\end{hyp}


Recall the function $\varphi_T$ defined by \eqref{eq:def-features} and notice that Assumption \ref{hyp:reg-f} implies  $\norm{\varphi_T (\theta)}_{L^2(\lambda_T)} > 0$ on $\Theta$. We define the function:
\begin{equation}
\label{def:g_T}
g_T(\theta)= \norm{\partial_\theta
	\phi_T(\theta)}_{L^2(\lambda_T)}^2, \quad \text{ where } \phi_{T}(\theta) = \varphi_T (\theta) /  \norm{\varphi_T (\theta)}_{L^2(\lambda_T)}.
\end{equation}

\begin{hyp}[Positivity of $g_T$]
	\label{hyp:g>0}
	Assumption \ref{hyp:reg-f} holds and we have	$g_T>0$ on $ \Theta$.
\end{hyp}
Let us mention that if for all $\theta \in \Theta$, $\varphi_T (\theta)$
and $\partial_\theta \varphi_T (\theta)$ are linearly independent
functions of $L^2(\lambda_T)$ and $\norm{\partial_\theta \varphi_T
  (\theta)}_{L^2(\lambda_T)} > 0$, then  $g_T>0$ on  $\Theta$ (see \cite[Lemma 3.1]{butucea22}).

\subsection{Definition of the kernel and its approximation}

\subsubsection{Measuring the colinearity of the features}

We define the symmetric kernel $\cK_T$ on $\Theta^2$ by: 
\begin{equation}
\label{eq:def-KT}
\cK_T(\theta, \theta')=\langle \phi_{T}(\theta), \phi_{T}(\theta')
\rangle_{L^2(\lambda_T)}.
\end{equation}
The kernel $\ck_T$ measures the colinearity of two features belonging to the continuous dictionary.
It does not \emph{a priori} have a simple form. In the following, we  approximate this kernel by another kernel easier to handle.

As mentioned  in the introduction, we  consider in this paper  a setting
where the sequence of measures $(\lambda_T, T \geq 1)$ converges in some
sense   towards   the   Lebesgue   measure  $\Leb$   on   $\Theta$.   In
\cite{butucea22}, the  kernel $\cK_T$  was free  of any  scale parameter
$\sigma_T$  and  authors  have   considered  a  pointwise  limit  kernel
$\cK_\infty = \lim_{T\to \infty} \cK_T$ which  is free of $T$ and allows
to continue the proofs under some assumptions. However, due to our scale
parameter $\sigma_T$ which  decreases towards zero with $T$,  we show in
the following example that the pointwise limit kernel is degenerate.

\begin{example}[Degenerate limit kernel]
  Consider    the   discrete-time    process   presented    in   Section
  \ref{sec:discrete_noise}     with      the     measure     $\lambda_T$
  from~\eqref{eq:def-lambdaT-reg-grid}   and   the   Gaussian   features
  \eqref{eq:def:gaussian-features}              from             Section
  \ref{sec:features-spaces} scaled by the sequence $(\sigma_T, T\geq 1)$
  that  tends  towards   zero  when  $T$  grows  to   infinity  so  that
  $\lim_{T \rightarrow + \infty} \Delta_T  /\sigma_T = 0$. In this case,
  the  sequence  of measures  $(\lambda_T,  T  \geq 1)$  converges  with
  respect to the  vague topology towards the Lebesgue measure  and it is
  easy to  check that  $\cK_\infty$, the pointwise  limit of  the kernel
  $\cK_T$, is equal to zero almost everywhere and to 1 on the diagonal.
	
\end{example}

Thus, instead of the pointwise limit kernel $\cK_\infty$, we shall approximate (for finite large enough $T$) the kernel $\cK_T$ by a kernel $ \ck_T^{\text{prox}}$ of the form:
\begin{equation}
\label{eq:def_K-app}
\ck_T^{\text{prox}} :(\theta,\theta') \mapsto \Funk(|\theta-\theta'|/\sigma_T),
\end{equation}
where  $\Funk$ is  a  real-valued  function  defined  on $\R_+$  with
$\Funk(0)=1$.  (Recall that $|\theta-\theta'|$ is the Euclidean distance
between $\theta$ and $\theta'$ on $\Theta$ which is either $\R$ or the
torus $\R/ \Z$.)
Notice that if $\Funk$ is  of class $\cc^{2\ell}$ with $F^{(2i+1)}(0)=0$
for $i\in \{0, \ldots,  \ell -1\}$ for some integer $\ell \geq 1$ 
(which is the case  if $\Funk$ can be
extended into  an even function of  class $\cc^{2\ell}$ on $\R$  ), then
$\ck_T^{\text{prox}}$ is  of class  $\cc^{\ell, \ell}$.  The  choice of
the  function $\Funk$  follows  from  the model  given  by $\kernel$  in
\eqref{eq:def-features}, so  that $\ck_T$ and  $\ck_T^{\text{prox}}$ are
close      (see     $\ref{hyp:proximity-setting}$      of     Assumption
\ref{hyp-estimation}     below).       We     refer      to     Sections
\ref{ex:gaussian-spike} and \ref{sec:low-pass-filter}  for examples with
$h$ given by \eqref{eq:def:gaussian-features} and \eqref{eq:low-pass-h},
respectively.  The  introduction of the kernel  $\ck_T^{\text{prox}}$ is
significantly   different   from    the   approximation   developed   in
\cite{butucea22}.

\subsubsection{Covariant derivatives of the kernel}
Let $\cK$ be a symmetric kernel of class
$\cc^2$ such  that the  function $g_\cK$  defined on
$\Theta$ by:
\begin{equation}
\label{eq:def-gK}
g_\cK(\theta)= \partial^2_{x,y}
\cK( \theta,\theta),
\end{equation}
is positive, where $\partial_x$
(respectively $\partial_y$) denotes the usual  derivative with respect to the
first (respectively second) variable. Under Assumptions \ref{hyp:reg-f} and $\ref{hyp:g>0}$, the definitions \eqref{def:g_T}  and  \eqref{eq:def-gK} coincide so that $g_T = g_{\ck_T}$ on $\Theta$.

Similarly to \cite{poon2018geometry},   we    introduce   the   covariant
derivatives
which reduce to elementary expressions since  the location parameters are
one-dimensional. More precisely following \cite[Section 4]{butucea22}, we set for  a smooth function $f$ defined on $\Theta$,  $\tilde D_{0;\cK}[f] =  f$, $\tilde D_{1;\cK}[f] =  g_\ck^{-1/2} f'$ and  for $i \geq 2$:
\begin{equation*}
\label{eq:tDi}
\tD_{i; \cK}[f]=\tD_{1; \cK}[\tD_{i-1; \cK}[f]].
\end{equation*}
Let us assume that the kernel $\ck$ has the form $\ck(\theta,\theta') = \left \langle f(\theta), f(\theta') \right \rangle_{L^2(\lambda)} $ for some function $f$ of class $\mathcal{C}^3$ and some measure $\lambda$ on $\Theta$.
We then define the covariant derivatives (see (27) in \cite{butucea22})
of $\cK$ for $i, j\in \{0, \ldots, 3\}$ and
$\theta, \theta'\in \Theta$ by:
\begin{equation*}
\label{def:derivatives_kernel}
\cK^{[i,j]}(\theta,\theta') = \langle
\tD_{i;\cK}[f](\theta), \tD_{j;\cK}[f](\theta')
\rangle_{L^2(\lambda)}. 
\end{equation*}
We also define the function $h_\cK$ on $\Theta$ by:
\begin{equation*}
\label{eq:def-h_K}
h_\cK(\theta)=\cK^{[3,3]}(\theta, \theta).
\end{equation*}

The previous notation will be used both for the kernel $\cK_T$ in \eqref{eq:def-KT}, which is determined by the particular choice of the features, but also for the kernel $\cK_T^{\text{prox}}$ in \eqref{eq:def_K-app}. The latter is determined by the  function $\Funk$ and we derive next the particular expressions of $g_{\cK_T^{\text{prox}}}$ and of the covariant derivatives of $\cK_T^{\text{prox}}$ under additional assumptions on $F$.

For a real valued function $f$ defined on a set $A$, we write $\norm{f}_{\infty} \!\!= \sup_{x \in A} |f(x)|$.
\begin{hyp}[Properties of the function $\Funk$]
	\label{hyp:properties_k}
Let $\Funk$ be a function defined on $\R_+$ of class
$\mathcal{C}^6$ with $F(0)=1$ and $F^{(2i+1)}(0)=0$ for $i\in \{0,1,
2\}$. We set: 
 \begin{equation}
\label{eq:def-g}
g_\infty = -F''(0).
\end{equation}
	We assume that:
	\begin{equation}
	\label{def:M_h_M_g}
	\begin{aligned}
	&g_\infty > 0, \quad  L_6 := g_\infty^{-3}|\Funk^{(6)}(0)| < + \infty,
	\quad\\
	&\text{and}\quad  L_i := g_\infty^{-i/2} \,   \norm{\Funk^{(i)}}_{\infty}
	<+\infty
	\quad\text{for all  $ i \in \{0, \cdots,4\}$}.
	\end{aligned}
	\end{equation}
\end{hyp}
We give  the covariant  derivatives of the  kernel $\ck_T^{\text{prox}}$
according  to  the  definition  given  in  \cite[(27)]{butucea22}:
for any $\theta,\theta' \in \Theta$
and $i,j \in \{0,\cdots,3\}$,
\begin{equation}
\label{eq:cov-derivative-F}
\ck_T^{\text{prox}[i,j]}(\theta,\theta') =  \frac{(-1)^j}{g_\infty^{(i+j)/2}} \Funk^{(i+j)} \left (|\theta-\theta'|/{\sigma_T} \right ).
\end{equation}
We notice that we have for any $\theta \in \Theta$:
\begin{equation}
\label{eq:def-g-inf}
g_{\ck_T^{\text{prox}}}(\theta) = g_\infty / \sigma_T^2.
\end{equation}

\subsubsection{Measuring the quality of the approximation}
\label{sec:approximation}
In this section, we  quantify the proximity of the kernel $\ck_T$ and $\ck_T^{\text{prox}}$. 

Following \cite{poon2018geometry}, we define the  one-dimensional Riemannian  metric $\dT(\theta,\theta')$   between
$\theta, \theta'\in \Theta$ by:
\begin{equation}
\label{eq:def_metric}
\dT(\theta,\theta')=  |G_T(\theta) -G_T(\theta')|,
\end{equation}
where  $G_T$ is a primitive of the function $\sqrt{g_T}$  assumed positive on $\Theta$ thanks to Assumption \ref{hyp:g>0}.

Recall that $\Theta_T$, introduced below the model \eqref{eq:model}, is a compact sub-interval of $\Theta$. 
Since $\Theta_T$ is compact, under
Assumptions~\ref{hyp:g>0} and~\ref{hyp:properties_k}, we deduce that the constant $\CT$
below is positive and finite, where:
\begin{equation}
\label{eq:def-rho}
\CT=\max \left(\sup_{\Theta_T} \sqrt{\frac{ g_{\ck_T^{\text{prox}}}}{ g_T
}},\sup_{\Theta_T} \sqrt{\frac{ g_{T} }{ g_{\ck_T^{\text{prox}}} }} \right). 
\end{equation}
Elementary calculations show that the metric $\dT$ defined in \eqref{eq:def_metric} is equivalent, up to a factor $\sigma_T$, to the Euclidean metric on $\Theta_T$ as for any $\theta,\theta' \in \Theta_T$:
\begin{equation}
\label{eq:equi-dT-dI}
\inv{\CT}\,  \sqrt{g_\infty} \, \sigma_T^{-1}\,  | \theta- \theta'| \leq  \dT(\theta,\theta') \leq  \CT\,  \sqrt{g_\infty} \, \sigma_T^{-1}\,  | \theta- \theta'|.
\end{equation}

\medskip

In order to quantify  the approximation of $\cK_T$ by
$\cK_T^{\text{prox}}$, we set:
\begin{equation}
\label{def:V_1}
\begin{aligned}
\DT=\max( \DT^{(1)}, \DT^{(2)})
\end{aligned}
\end{equation}
\begin{equation*}
	\quad\text{with}\quad \!\!
	\DT^{(1)}= \!\!\max_{i,j\in \{0, 1, 2\} }\, \sup_{\Theta_T^2} |
	\cK_T^{[i,j]} - \cK_T^{{\text{prox}}[i,j]}|
	\quad\text{and} \!\! \quad
	\DT^{(2)}=\sup_{\Theta_T} |h_{\cK_{T}} - h_{\cK_T^{\text{prox}}}|.
\end{equation*}

\subsection{Boundedness and local concavity on the diagonal  of the approximating kernel}

Recall the definition of the kernel $\ck_T^{\text{prox}}$ given by
\eqref{eq:def_K-app} using the  function $\Funk$.
We quantify the boundedness and local concavity on the diagonal of the kernel $\ck_T^{\text{prox}}$ using for $r> 0$:
\begin{align}
\label{eq:def-e0}
\varepsilon(r)
& = 1 - \sup \left\{ |F(r')|; \quad r' \geq r
\right\},\\
\label{eq:def-e2}
\nu(r)
&= -\sup \left\{
F''(r')/g_\infty; \quad r' \in [0, r] \right\}.
\end{align}

We also quantify the colinearity between $s \in \N$ features belonging to the continuous dictionary, by setting for $u>0$:
\begin{multline}
\label{eq:def-delta-rs}
\delta(u,s) = \inf \Big\{ \delta>0\, \colon\,   \max_{1\leq  \ell \leq
	s} \sum\limits_{k=1, k\neq
	\ell}^{s}
g_\infty^{-\frac{i}{2}}|F^{(i)}(x_\ell - x_k)|
\leq u, \\  \text{ for all } i \in  \{0,1,2, 3 \} \text{ and } 
(x_1,\cdots,x_s)  \in \R^s(\delta) \Big\},
\end{multline}
where for any subset $A$ of $\R$ or $\R/\Z$  and for any $\delta \geq 0$,
\begin{equation}
\label{eq:def-set-restriction}
A^s(\delta)= \Big  \{  (\theta_1,\cdots,\theta_s) \in A^s\,
\colon\,  |\theta_{\ell} - \theta_{k}| >  \delta \text{  for
	all distinct } k, \ell\in \{1, \ldots, s\}   \Big \}. 
\end{equation}
with the conventions $\inf  \emptyset=+\infty $, and for $s=0, 1$: $A^0(\delta) = \{0\}$  and $A^1(\delta) = A$.

Following \cite{butucea22},  we   define     quantities    which    depend    only on  the function $\Funk$ and on a real parameter $r>0$:
\begin{align*}
H_{\infty}^{(1)}(r)
&= \inv{2} \wedge L_{2} \wedge L_{3} \wedge L_4 \wedge L_6 \wedge
\frac{\nu(2 r)}{10} \wedge
\frac{\varepsilon(r/2) }{10},\\ 
H_{\infty}^{(2)} (r)
&= \frac{1}{6} \wedge \frac{8\varepsilon (r/2) }{10(5 + 2
	L_{1})}  \wedge 
\frac{8\nu(2 r)}{9 ( 2L_{2} + 2 L_{3} + 4)},
\end{align*}
where the constants $L_i$ are defined in \eqref{def:M_h_M_g}.

\subsection{Main assumption and identifiability of the model}
\label{sec:assumption}

We summarize here all assumptions that are needed for the following results. They concern the features, the function $F$ characterizing the proxy kernel $\cK_T^{\text{prox}}$, the proximity of the kernel $\cK_T$ defined by the original features to the prox kernel $\cK_T^{\text{prox}}$ and, last but not least, the assumption that two neighbouring non-linear parameters $\theta$ and $\theta'$ are at least separated by some constant multiplied by $\sigma_T$. This is the most important improvement on the sufficient conditions in \cite{butucea22}, as the scaling parameter $\sigma_T$ can be chosen small in some models.
\begin{hyp}
	\label{hyp-estimation}
	Let $T \in \N$, $s \in \N$,  $r \in \left (0,1/\sqrt{2 \, g_\infty \, L_{2}} \right )$, $\eta \in (0,1)$ and a subset $\cq \subset \Theta_T$ of cardinal $s$. 
	
	\begin{propenum}		
		\item\textbf{Regularity           of            the           dictionary
			$\varphi_T$:}\label{hyp:theorem_properties_dico}    The   dictionary
		function   $\varphi_T$   satisfies   the  smoothness   conditions   of
		Assumption~\ref{hyp:reg-f}.    The    function   $g_T$    defined   in
		\eqref{def:g_T},    satisfies    the     positivity    condition    of
		Assumption~\ref{hyp:g>0}.
		
		\item\textbf{Properties            of            the            function
			$\Funk$:}         \label{hyp:theorem_properties_k}        Assumption
		\ref{hyp:properties_k} holds  and we  have $\varepsilon(r/2) >  0$ and
		$\nu(2 r) > 0$.
		
		\item\textbf{Proximity             to              the             limit
			setting:}       \label{hyp:proximity-setting}       The       kernel
		$\cK_T$      defined      from      the      dictionary,
		see~\eqref{eq:def-KT}, is sufficiently close  to the kernel $\cK_T^{\text{prox}}$  in
		the sense that we have:
		\[
		\CT \leq 2
		\]
		and if $s \geq 1$, we have in addition:
		\[
		\DT\leq H_{\infty}^{(1)}(r)
		\quad\text{and}\quad  (s-1) \DT\leq (1-\eta) H_{\infty}^{(2)}(r).
		\]
		
		\item\textbf{Separation of the non-linear parameters:}  \label{hyp:separation} If $s\geq1$, we have:
		\[ 
		\delta(\eta H_{\infty}^{(2)}(r),s)  < + \infty \quad \text{ and for any
		}  \theta \neq \theta' \in \cq, \quad |\theta - \theta' |> \sigma_T \,
		\Sep,  
		\]
		where,
		\[
		\Sep = 4 \, \max \left (r g_\infty^{-1/2}, 2  \,\delta(\eta
		H_{\infty}^{(2)}(r),s) \right ).
		\]
	\end{propenum}
\end{hyp}
\begin{rem}[On the separation condition]
	{The separation condition corresponds to the minimal distance between any pair of nonlinear parameters ensuring that a coherence function remains bounded from above by a specified constant dependent on the dictionary. This condition is mathematically represented in \eqref{eq:def-delta-rs} and expressed with the following coherence function:
		$$
		\max_{1\leq \ell \leq s} \sum_{k=1, k \neq \ell}^{s} g_\infty^{-\frac{i}{2}} |F^{(i)}(x_\ell - x_k)|,
		$$
		where $\{x_1,\cdots,x_s\}$ is a set of nonlinear parameters.
	This function is quite similar to the Babel function introduced in \cite{tropp2004greed}, which measures the maximum total coherence between a fixed atom and a collection of other atoms in a finite dictionary. In linear cases (when the dictionary consists of a finite number of atoms), keeping the Babel function below a certain threshold  allows for the derivation of results on the recovery of sparse signals. 
	We stress that similar separation conditions to   Assumption \ref{hyp-estimation} are common in super-resolution, compressed sensing and spikes deconvolution for recovering signals  derived from  continuous dictionaries, see \cite{candes2014towards,duval2015exact,poon2018geometry} among many other references.}
\end{rem}

In Sections~\ref{ex:gaussian-spike} and~\ref{sec:low-pass-filter} we give simplified expressions of the quantities involved in the previous assumption for the particular models in hand.

Under  Assumption   \ref{hyp-estimation},  we  shall   build  consistent
estimators  for   $\beta^\star$  and  $\vartheta^\star$  of   the  model
\eqref{eq:model}  and test  statistics.   The following  lemma gives  an
identifiability  result  for the  considered  model  under the  previous
assumptions. Its  proof relies on
the construction of  certificates from \cite{butucea22} and  is based on
ideas  developed  in  \cite{de2012exact}  for  exact  reconstruction  of
measures,  see  Lemma  1.1  therein.    We  recall  that  by  convention
$\beta^\star\Phi_T(\vartheta^\star)=0$ when $s=0$.
\begin{lem}[Sufficient conditions for identifiability]
	\label{lem:identifiability}
	Let $T \in \N$ and let  $r \in \left (0,1/\sqrt{2 \, g_\infty \, L_{2}} \right )$, $\eta \in (0,1)$. Suppose that Assumption \ref{hyp-estimation} holds for the set $\cq^\star = \{\theta^\star_1,\cdots,\theta_{s}^\star\}  \subset \Theta_T$ of cardinal $s \in \N$ and for the set $\cq^0  = \{\theta^0_1,\cdots,\theta_{s^0}^0\} \subset \Theta_T$  of cardinal $s^0 \in \N$.
	Then, for any vectors $\beta^\star \in (\R^*)^s, \beta^0 \in (\R^*)^{s^0}$, we have that, up to the same permutation on the components of $\beta^\star$ and $\vartheta^\star$:
	\begin{equation}
	\label{eq:identifiability}
	\beta^\star \Phi_T(\vartheta^\star) = \beta^0 \Phi_T(\vartheta^0) \quad \! \! \text{in }L^2(\lambda_T), \, \, \, \text{ implies that }\,\,\,  s=s^0, \,\, \beta^\star = \beta^0, \, \, \, \vartheta^\star= \vartheta^0.
	\end{equation}
\end{lem}
The proof is in Section~\ref{sec:proof_Lemma_2_4}. 
\begin{remark}
	Recall that if $s\geq 1$, then  $\beta^\star$ is a $s$-dimensional vector with non-zero entries. Under the assumptions of Lemma \ref{lem:identifiability} we have that:
	\[
	\beta^\star \Phi_T(\vartheta^\star) = 0 \quad \text{if and only if} \quad s =0.
	\]  
\end{remark}

\subsection{Prediction error bound}

We define the estimators $\hat \beta$ and $\hat \vartheta$ of $\beta^\star$ and $\vartheta^\star$ as the solution to the following regularized optimization problem with a real tuning parameter $\kappa>0$ and a bound $K$ on the unknown number $s$ of active features in the observed mixture:
\begin{equation}
\label{eq:generalized_lasso}
(\hat{\beta},\hat{\vartheta}) \in \underset{\beta \in
	\mathbb{R}^{K}, \vartheta \in \Theta_{T}^K}{\text{argmin}} \quad
\frac{1}{2}\norm{y - \beta\Phi_{T}(\vartheta)}_{L^2(\lambda_T)}^{2} +\kappa
\norm{\beta}_{\ell_1},
\end{equation}
where $\norm{\cdot}_{\ell_1}$ corresponds to the usual $\ell_1$ norm.
Since the interval $\Theta_T$ on  which the  optimization of the  non-linear
parameters is performed is  a compact interval and the function $\Phi_T$ is continuous, the existence of at least a  solution is guaranteed. 
The  bound $K$ on the number $s$ of features in the mixture from model \eqref{eq:model} allows to formulate an optimization problem. It can be arbitrarily large. In particular, it is not involved in the bounds on estimation and prediction risks given in \cite{butucea22} with high probability (see Remark 2.4 therein). We stress that the constants in \cite{butucea22} appearing in those bounds may \emph{a priori} depend on $T$ when the features are scaled by $\sigma_T$. We show below that, in fact, those bounds still hold with constants free of $T$. The results in \cite{butucea22} as well as the proof of Theorem \ref{maintheorem} below rely on the existence of certificate functions. In \cite{butucea22}, sufficient conditions for the certificate functions to exist are given, see Proposition 7.4 and 7.5 therein. Those conditions require the non-linear parameters in $\cq^\star$ to satisfy the separation condition \eqref{eq:separation-Q-star}. In our framework where the scaling $\sigma_T$ decreases to zero, it turns out that this separation is in general increasing with $s$ and decreasing with $T$. However, for some dictionary composed of translated spikes that vanish quickly, it converges to zero when both $s$ and $T$ grow to infinity. We refer to Section \ref{ex:gaussian-spike} in this direction.

\medskip

Recall the definitions of $g_\infty$ and $L_2$ given by \eqref{eq:def-g}  and \eqref{def:M_h_M_g}. The following theorem is a variation of \cite[Theorem 2.1]{butucea22}. 

\begin{theorem}
	\label{maintheorem}
	Let $T \in \N, s \in \N^*$, $K \in \N^*$, $\eta \in (0,1)$,  $r \in \left (0,1/\sqrt{2 \, g_\infty \, L_{2}} \right )$. Assume we observe the random element $y$ of $L^2(\lambda_T)$ under the regression model (\ref{eq:model}) with unknown parameters  $\beta^\star \in (\R^{*})^s$ and $\vartheta^\star= \left (
	\theta_1^\star,\cdots,\theta_s^\star\right )$ a vector with distinct entries in
	$ \Theta_T$, a compact interval of $\Theta$,   such that Assumption \ref{hyp-estimation} holds for $\cq^\star = \{\theta_1^\star,\cdots,\theta_s^\star\} \subset \Theta_T$. Assume that the unknown number of active features $s$ is bounded by $K$. Suppose also that the noise process $w_T$ satisfies Assumption \ref{hyp:bruit} for a noise level $\scale>0$ and a decay rate for the noise variance $\Delta_T>0$.

	Then, there exist finite positive constants $\mathcal{C}_i$, for $i=0,  \ldots, 3$, 
	depending on the function $\Funk$ and on $r$ such that for 
	any $\tau > 1$ and a tuning parameter:
	\begin{equation}
	\label{eq:bound-kappa}
	\kappa \geq \mathcal{C}_1 \scale \sqrt{\Delta_T  \log (\tau)},
	\end{equation}
	we have the prediction error bound of the estimators $\hat{\beta}$ and $\hat{\vartheta}$ defined in
	\eqref{eq:generalized_lasso} given by:
	\begin{equation}
	\label{eq:main_theorem}
	\begin{aligned}
	\norm{\hat{\beta}\Phi_{T}(\hat{\vartheta}) -
		\beta^{\star}\Phi_{T}(\vartheta^{\star}) }_{L^2(\lambda_T)}
	&\leq  \mathcal{C}_0 \,  \sqrt{\sparse} \, \kappa,
	\end{aligned}
	\end{equation}
	with  probability larger than $1  -
	\mathcal{C}_2 \left (  \frac{|\Theta_T|}{ \sigma_T  \, \tau \sqrt{\log (\tau)} }\vee
	\frac{1}{\tau}\right )$ where $|\Theta_T|$ is the Euclidean length of $\Theta_T$. Moreover, with the same probability, the difference of the $\ell_1$-norms of $\hat{\beta}$ and $\beta^\star$ is bounded by:
	\begin{equation}
	\label{eq:main_theorem_diff_l1}
	\left |\| \hat{\beta}\|_{\ell_1} - \| \beta^\star \|_{\ell_1} \right | \leq \mathcal{C}_3 \, \kappa \, \sparse.
	\end{equation}
\end{theorem}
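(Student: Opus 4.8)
The plan is to reduce the statement to \cite[Theorem 2.1]{butucea22} and to check that, after rescaling by $\sigma_T$, every constant produced by that argument can be chosen independently of $T$. Assumption~\ref{hyp-estimation} has been tailored so as to match the sufficient conditions under which the certificate functions of \cite[Propositions 7.4 and 7.5]{butucea22} exist. As a first step I would record that, by the metric equivalence \eqref{eq:equi-dT-dI}, the Euclidean separation \eqref{eq:separation-Q-star} imposed on $\cq^\star$ translates into a separation of the images $G_T(\theta_\ell^\star)$ in the intrinsic Riemannian metric $\dT$ of the form $\dT(\theta_k^\star,\theta_\ell^\star)\geq c\,\Sep$ for a numerical constant $c$. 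This is exactly the type of separation, measured in the intrinsic metric, required by the certificate constructions of \cite{poon2018geometry} and \cite{butucea22}.

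The key observation is that the covariant derivatives of the approximating kernel are scale-free. Indeed, \eqref{eq:cov-derivative-F} shows that $\ck_T^{\text{prox}[i,j]}(\theta,\theta')$ depends on $T$ only through the rescaled argument $|\theta-\theta'|/\sigma_T$, while \eqref{eq:def-g-inf} gives $g_{\ck_T^{\text{prox}}}=g_\infty/\sigma_T^2$. Consequently the change of variables $x=(\theta-\theta')/\sigma_T$ turns every quantity entering the certificate construction for $\ck_T^{\text{prox}}$ into an expression depending only on $F$ and on the rescaled locations. The proximity conditions $\CT\leq 2$, $\DT\leq H_{\infty}^{(1)}(r)$ and $(s-1)\DT\leq(1-\eta)H_{\infty}^{(2)}(r)$ of \ref{hyp:proximity-setting}, together with the separation of \ref{hyp:separation} and the properties of $F$ from Assumption~\ref{hyp:properties_k}, then let me transfer the certificate estimates from $\ck_T^{\text{prox}}$ to $\cK_T$ at the cost of the controlled error $\DT$, exactly as in \cite{butucea22}, but now with all constants output by the construction being functions of $F$ and $r$ alone. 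This produces a certificate interpolating the signs $\sgn(\beta_\ell^\star)$ at $\theta_\ell^\star$ with vanishing first covariant derivative, staying strictly below $1$ in modulus away from $\cq^\star$, with quadratic decay near each $\theta_\ell^\star$ quantified through $\nu(2 r)$ and $\varepsilon(r/2)$.

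Next I would control the noise. The lower bound \eqref{eq:bound-kappa} on $\kappa$ is dictated by the requirement that it dominate the stochastic fields $\langle \phi_T(\theta),w_T\rangle$ and their first covariant derivative, uniformly over $\theta\in\Theta_T$. Under Assumption~\ref{hyp:bruit} each such field is centered Gaussian with variance at most $\scale^2\Delta_T$ after the intrinsic normalization, and its sample paths fluctuate on the scale $\sigma_T$ because of the factor $\sigma_T^{-1}$ carried by $g_T^{-1/2}$ in \eqref{eq:equi-dT-dI}. Discretizing $\Theta_T$ on a grid of mesh proportional to $\sigma_T$ yields of order $|\Theta_T|/\sigma_T$ points; a union bound over this grid for the Gaussian maxima, refined by the Gaussian tail factor that produces the extra $1/\sqrt{\log\tau}$, together with the $\cc^3$ smoothness used to interpolate between grid points, gives a uniform control holding with probability at least $1-\mathcal{C}_2\big(|\Theta_T|/(\sigma_T\,\tau\sqrt{\log\tau})\vee 1/\tau\big)$. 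This is precisely where the effective dimension $|\Theta_T|/\sigma_T$, rather than a $T$-dependent constant, enters the probability.

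Finally, on the event where the noise is controlled, the prediction bound \eqref{eq:main_theorem} follows from the usual certificate argument: writing the optimality of $(\hat\beta,\hat\vartheta)$ for \eqref{eq:generalized_lasso} against $(\beta^\star,\vartheta^\star)$, pairing the resulting inequality with the certificate, and exploiting its strict local concavity near $\cq^\star$ to convert the $\ell_1$ control into an $L^2(\lambda_T)$ prediction control of order $\sqrt{s}\,\kappa$. The same set of inequalities, read off differently, bounds $\big|\,\norm{\hat\beta}_{\ell_1}-\norm{\beta^\star}_{\ell_1}\,\big|$ and yields \eqref{eq:main_theorem_diff_l1}. I expect the main obstacle to be the bookkeeping of the second paragraph: one must verify that \emph{every} constant inherited from \cite{butucea22} becomes, after the $\sigma_T$-rescaling, a function of $F$ and $r$ only, since a careless application would let the vanishing scale $\sigma_T$ leak into the constants and make them blow up as $T\to\infty$. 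The scale invariance exhibited by \eqref{eq:cov-derivative-F} and \eqref{eq:def-g-inf} is exactly what prevents this.
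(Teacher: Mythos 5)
Your proposal follows essentially the same route as the paper: reduce to \cite[Theorem 2.1]{butucea22} with the approximating kernel $\ck_T^{\text{prox}}$ in place of the limit kernel, verify the certificate hypotheses via \cite[Propositions 7.4 and 7.5]{butucea22} using the separation transferred to the intrinsic metric through \eqref{eq:equi-dT-dI}, and observe that the scale-invariance in \eqref{eq:cov-derivative-F} and \eqref{eq:def-g-inf} keeps every constant a function of $\Funk$ and $r$ only, with the effective dimension $|\Theta_T|/\sigma_T$ entering the probability. The extra detail you give on the chaining/union-bound noise control and the certificate-based optimality argument merely unpacks the cited theorem rather than departing from the paper's argument, so nothing further is needed.
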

\begin{proof}
	The proof is similar to the proof of \cite[Theorem~2.1]{butucea22}
	where one replaces the limit kernel noted $\ck_\infty$ therein by the approximating kernel
	$\ck_T^{\text{prox}}$ defined in~\eqref{eq:def_K-app}. The main difference is in checking condition 
	$(v)$ in Theorem~2.1 on the existence of certificate functions. This is done by using 
	Propositions 7.4 and 7.5 therein, and by noticing that the special form of the approximating kernel 
	$\ck_T^{\text{prox}}$ implies that the constants involved do not depend on the 
	scale parameter $\sigma_T$. Indeed Equation~\eqref{eq:cov-derivative-F} clearly entails  that they 
	do not depend on the scale parameter. The details of the proof  are left to the interested reader.
\end{proof}

\medskip
\begin{remark}[On the separation]
	\label{rem:separation}
	We perform the estimation of $\beta^\star$ and $\vartheta^\star = (\theta_1^\star,\cdots,\theta_s^\star)$ from model \eqref{eq:model} under the separation condition:
	\begin{equation}
	\label{eq:separation-Q-star}
	|\theta_k^\star - \theta_\ell^\star| \geq \sigma_T \,  \Sep, \quad \text{ for all } 1 \leq k , \ell \leq s, \, k \neq \ell,
	\end{equation}
	with $\Sep$ given in $\ref{hyp:separation}$ of Assumption \ref{hyp-estimation}. Taking into account the separation condition, the number of admissible features which can be used for the prediction is at most of order $|\Theta_T|/\sigma_T$; this provides a natural upper bound on $s$.  
	As $\eta$ is usually fixed, we highlight that the least separation bound tends towards zero when the scaling $\sigma_T$ goes down to zero.
\end{remark}



\section{Goodness-of-fit for the LCTF model}
\label{sec:goodness-of-fit}
In this section, we build a test procedure to decide if the observation $y$ derives from a given linear combination of translated features.
We build a test $\Psi$, {\it i.e.} a measurable function of the observation $y$ taking value in $\{0,1\}$, in order to distinguish  a null hypothesis $H_0$ against an alternative $H_1(\rho)$ depending on a nonnegative separation parameter $\rho$. We recall that the maximal type I and II error probabilities are $\sup_{(\beta^\star, \vartheta^\star) \in H_0}\E_{(\beta^\star,\vartheta^\star)}[\Psi]$ and $\sup_{(\beta^\star, \vartheta^\star) \in H_1(\rho)} \E_{(\beta^\star,\vartheta^\star)}[1-\Psi]$, respectively, 
where $\Psi$ is a function of $y$ which is equal to  $\beta^\star\Phi_T(\vartheta^\star) + w_T$ under $\E_{(\beta^\star,\vartheta^\star)}$. The maximal testing risk is the sum of the former quantities, that is:
\[
R_\rho(\Psi) = \sup_{(\beta^\star, \vartheta^\star) \in H_0}\E_{(\beta^\star,\vartheta^\star)}[\Psi] + \sup_{(\beta^\star, \vartheta^\star) \in H_1(\rho)} \E_{(\beta^\star,\vartheta^\star)}[1-\Psi],
\]
and the minimax testing risk is: 
\begin{equation}
\label{def:minimax_testing_risk}
R^\star_\rho = \inf_{\Psi}R_\rho(\Psi),
\end{equation}
where the infinimum is taken over all the measurable functions from $L^2(\lambda_T)$ to $\{0,1\}$.
The minimax separation rate of the test problem is defined for any $\alpha \in (0,1)$ as: 
\begin{equation}
\label{def:minimax_separation}
\rho^\star(\alpha) = \inf \{\rho>0: R^\star_{\rho} \leq \alpha\}.
\end{equation}

\subsection{Test problem}
Let $s^0 \in \N$ and consider the set  $\Theta_T^{s^0}(\delta^0)\subset \Theta_T^{s^0}$ of vectors whose components are pairwise separated by a distance $\delta^0 \geq 0$ (recall the definition \eqref{eq:def-set-restriction}). Consider the vectors $\beta^0 \in (\R^*)^{s^0}$ and $\vartheta^0 = (\theta^0_1, \cdots, \theta_{s^0}^0) \in \Theta_T^{s^0}(\delta^0)$. By convention, we have for $s^0=0$ that $\beta^0 = 0$, $\vartheta^0=0$ and $\beta^0\Phi_T(\vartheta^0) = 0$.

We  build a test procedure based on the observation $y$ to decide, for some $\delta^\star \geq 0$, whether:
\begin{equation}
\label{eq:H0_goodness_of_fit}
\begin{cases}
H_0 : &(\beta^\star, \vartheta^\star) \in (\R^*)^s \times \Theta_T^{s}(\delta^\star) \!\quad \text{s.t.} \!\quad   \quad \beta^\star \Phi_T(\vartheta^\star) = \beta^0 \Phi_T(\vartheta^0), \\
H_1 (\rho): \!\!\!&(\beta^\star, \vartheta^\star) \in (\R^*)^s \times \Theta_T^{s}(\delta^\star) \quad \!\text{s.t} \!\quad \norm{\beta^\star \Phi_T(\vartheta^\star) - \beta^0 \Phi_T(\vartheta^0)}_{L^2(\lambda_T)} \geq \rho,
\end{cases}
\end{equation}
where $\rho$ is a nonnegative  separation parameter.
When  Assumption \ref{hyp-estimation} holds for the sets $\cq^\star = \{\theta^\star_1,\cdots,\theta_{s}^\star\}$ and  $\cq^0  = \{\theta^0_1,\cdots,\theta_{s^0}^0\}$,  by Lemma  \ref{lem:identifiability}, the null hypothesis implies that $ (\beta^\star,\vartheta^\star)=(\beta^0, \vartheta^0) $ (up to the same permutation on the components of $\beta^\star$ and $\vartheta^\star$). We remark that the separation condition from Point $\ref{hyp:separation}$ of Assumption~\ref{hyp-estimation} required between the elements of $\cq^\star$ (resp. $\cq^0$) is automatically satisfied when  $\delta^\star \geq \sigma_T\, \Sep$ (resp. $\delta^0 \geq \sigma_T\, \Sigma(\eta,r,s^0)$).

We shall denote the distribution under the null hypothesis as associated to the parameters $(\beta^0,\vartheta^0)$ and see that the maximal type I error probability writes in this case  $\E_{(\beta^0,\vartheta^0)}[\Psi]$ for $\E_{(\beta^\star,\vartheta^\star)}[\Psi]$. 
Furthermore, when $s^0= 0$, under Assumption \ref{hyp-estimation} for the set $\cq^\star$, Lemma \ref{lem:identifiability} implies that the null hypothesis reduces to $H_0: s= 0$.

\subsection{Main results}
We consider the test procedure  $\Psi_\Tau(\threshold)$ associated to a real valued statistic $\Tau$ (measurable function of the observation $y$) and a threshold $\threshold >0$ (defining a critical region) given by: 
\begin{equation}
\label{eq:def-test}
\Psi_\Tau(\threshold) = \ind_{ \{|\Tau| >\threshold \}}.
\end{equation}
We recall that for a test $\Psi$, we accept $H_0$ when $\Psi = 0$ and reject it when $\Psi = 1$.

{It is  now well-known that several  test statistics may
  be combined to cover for several regimes in the set of parameters. Our
  test statistics will  be produced by estimating in  two different ways
  $\norm{\beta^\star       \Phi_T(\vartheta^\star)       -       \beta^0
    \Phi_T(\vartheta^0)}_{L^2(\lambda_T)}^2$, the squared $L^2(\lambda_T)$
  distance separating  the null and  the alternative hypothesis.  On the
  one hand,  we plug-in  the estimators from  the previous  section into
  this distance and, on the other  hand, we use the observed process $y$
  as a proxy  for the unknown signal,  in which case it  is necessary to
  remove            the            known            bias            term
  $\mathbb{E}  \left   [  \norm{w_T}^2_{L^2(\lambda_T)}  \right   ]$  as
  follows.}

Let $s^0 \in \N$ and consider known linear coefficients and location parameters $\beta^0 \in (\R^*)^{s^0}$ and $\vartheta^0 = (\theta^0_1, \cdots, \theta_{s^0}^0) \in \Theta_T^{s^0}$, respectively.
We define two statistics $\Tau_1$ and $\Tau_2$ by:
\begin{equation}
\label{def:test_statistic}
\begin{aligned}
&\Tau_1 = \norm{y-\beta^0\Phi_T(\vartheta^0)}_{L^2(\lambda_T)}^2 - \mathbb{E} \left [ \norm{w_T}^2_{L^2(\lambda_T)} \right ] ,\\
&\Tau_2 = \norm{\hat{\beta}\Phi_T(\hat{\vartheta}) - \beta^0\Phi_T(\vartheta^0)}_{L^2(\lambda_T)}^2,
\end{aligned}
\end{equation}  
where $\hat{\beta}$ and $\hat{\vartheta}$ denote the estimators obtained from \eqref{eq:generalized_lasso} for a given value of the tuning parameter $\kappa$ and a bound $K$ on the unknown number $s \in \N$ of active features in the observed signal.

Recall the definition \eqref{eq:def:noise-variance} of $\varmodel$, the
variance of the squared $L^2(\lambda_T)$-norm of  the noise $w_T$. The
following theorem gives an upper bound of the maximal testing risk
associated to the test $\Psi_{\Tau_1}(\threshold)$ for some positive
threshold $\threshold$ and positive separation $\rho$. Its proof can be
found in Section~\ref{sec:proof_thm_3_1}.

\begin{theorem}
	\label{th:non_sparse}
	Let $T \in \N$ and $s^0 \in \N$. Let:
        \[
          \delta^\star \geq 0 \quad\text{and}\quad
          \delta^0 \geq 0.
        \]
        Assume that we observe the random element $y$ of $L^2(\lambda_T)$ under the regression model \eqref{eq:model} with unknown parameters   $s \in \N$, $\beta^\star \in (\R^*)^s$ and $\vartheta^\star  \in \Theta_T^s(\delta^\star)$. Let $\beta^0 \in (\R^*)^{s^0}$ and $\vartheta^0 \in \Theta_T^{s^0}(\delta^0)$. Suppose that Assumption \ref{hyp:reg-f} on the smoothness of the features holds. Suppose that Assumption \ref{hyp:bruit} holds for a noise level $\scale>0$ and a decay rate for the noise variance $\Delta_T>0$. 
	
	Then, the test $\Psi_{\Tau_1}$ in  \eqref{eq:def-test}  using $\Tau_1$ in \eqref{def:test_statistic} satisfies:
	\begin{equation}
	\label{eq:non_sparse}
	R_\rho\left(\Psi_{\Tau_1}(\threshold)\right) \leq  \frac{\varmodel}{\threshold^2} +  \frac{4 \, \varmodel}{(\rho^2 - \threshold)^2} +  \expp{ - ( \rho^2 - \threshold)^2/(32\scale^2 \Delta_T \rho^2)},
	\end{equation}
	for any threshold $\threshold$ and any separation $\rho$  such that $ \rho^2 > \threshold > 0$.
\end{theorem}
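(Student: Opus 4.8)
The plan is to decompose $\Tau_1$ into a deterministic signal term plus two centered fluctuations — one Gaussian, one coming from the squared noise norm — and then to control the type~I and type~II errors separately using Chebyshev's inequality and a Gaussian tail bound. Write $f^\star = \beta^\star\Phi_T(\vartheta^\star)$ and $f^0 = \beta^0\Phi_T(\vartheta^0)$, and set $d = \norm{f^\star - f^0}_{L^2(\lambda_T)}^2$. Since $y - f^0 = (f^\star - f^0) + w_T$, expanding the square yields
\[
\Tau_1 = d + W + Z, \qquad W = 2\langle f^\star - f^0, w_T\rangle_{L^2(\lambda_T)}, \qquad Z = \norm{w_T}_{L^2(\lambda_T)}^2 - \E\big[\norm{w_T}_{L^2(\lambda_T)}^2\big].
\]
By Assumption~\ref{hyp:bruit}, $W$ is a centered Gaussian variable with $\Var(W) \leq 4\scale^2\Delta_T\, d$, while $Z$ is centered with $\Var(Z) = \varmodel$.

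For the type~I error, under $H_0$ we have $f^\star = f^0$ by definition of the null, so $d = 0$ and $W = 0$, whence $\Tau_1 = Z$. Chebyshev's inequality then gives $\E_{(\beta^0,\vartheta^0)}[\Psi_{\Tau_1}(\threshold)] = \P(|Z| > \threshold) \leq \varmodel/\threshold^2$, which is the first term. For the type~II error, under $H_1(\rho)$ we have $d \geq \rho^2 > \threshold$. Since $\{|\Tau_1|\le\threshold\}\subseteq\{\Tau_1\le\threshold\}$ and $\Tau_1\le\threshold$ reads $W + Z \leq -(d-\threshold)$ with $d-\threshold > 0$, a union bound performed at the data-dependent level $(d-\threshold)/2$ gives
\[
\P(\Tau_1 \leq \threshold) \leq \P\big(-W \geq (d-\threshold)/2\big) + \P\big(-Z \geq (d-\threshold)/2\big).
\]
Chebyshev controls the second term by $4\varmodel/(d-\threshold)^2 \leq 4\varmodel/(\rho^2-\threshold)^2$ (using $d\ge\rho^2$), and the Gaussian tail bound controls the first by $\exp\big(-(d-\threshold)^2/(32\scale^2\Delta_T\, d)\big)$.

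The main subtlety is that this Gaussian variance bound involves $d$, which ranges over the whole half-line $[\rho^2,\infty)$, whereas the announced bound features only $\rho^2$. I would resolve this by a monotonicity argument: the function $\psi(d) := (d-\threshold)^2/d = d - 2\threshold + \threshold^2/d$ has derivative $1 - \threshold^2/d^2 > 0$ for $d > \threshold$, hence is increasing on $(\threshold,\infty)$, so $(d-\threshold)^2/d \geq (\rho^2-\threshold)^2/\rho^2$ uniformly over $d \geq \rho^2$. This turns the Gaussian tail into $\exp\big(-(\rho^2-\threshold)^2/(32\scale^2\Delta_T\,\rho^2)\big)$. Summing the type~I and type~II contributions yields the stated bound. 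The crucial and only nonroutine choice is to split the union bound at the level $(d-\threshold)/2$ rather than at $(\rho^2-\threshold)/2$, so that the true signal strength $d$ drives the Gaussian concentration before the monotonicity of $\psi$ trades it down to the worst case $d = \rho^2$; a naive split would leave a spurious factor and fail to reproduce the clean $\rho^2$ in the exponent.
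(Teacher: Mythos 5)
Your proof is correct and follows essentially the same route as the paper's: Chebyshev for the type~I error, and for the type~II error a union bound on the two fluctuation terms at the data-dependent level $(d-\threshold)/2$, with the monotonicity of $d\mapsto(d-\threshold)^2/d$ on $(\threshold,\infty)$ (equivalently, of $R\mapsto(R^2-\threshold)/R$) reducing the Gaussian tail to the worst case $d=\rho^2$. The paper performs exactly this reduction, merely phrased in terms of $R=\sqrt{d}$ and with absolute values in place of your one-sided events.
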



We deduce from Theorem \ref{th:non_sparse} upper bounds on the minimax separation $\rho^\star$ defined in \eqref{def:minimax_separation} for the goodness-of-fit test problem \eqref{eq:H0_goodness_of_fit}.
\begin{corollary}
	\label{th:non_sparse_R}
	Under the framework and the assumptions of Theorem \ref{th:non_sparse}, the minimax separation rate for the test problem \eqref{eq:H0_goodness_of_fit} verifies for any $\alpha \in (0,1)$:
	\begin{equation}
	\label{eq:non_sparse_R}
	\rho^\star(\alpha) \leq  \rho^{(1)}(\alpha) 
	\quad\text{with}\quad 
	\rho^{(1)}(\alpha):=\max \left( \left (\frac{40 \varmodel}{\alpha} \right )^{1/4} , 8 \, \scale \sqrt{ 2\Delta_T \log\left (\frac{2}{\alpha} \right )} \right ).
	\end{equation}
\end{corollary}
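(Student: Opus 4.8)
The plan is to produce, for each fixed $\alpha \in (0,1)$, a single explicit member of the test family $\Psi_{\Tau_1}(\threshold)$ whose maximal risk at separation level $\rho=\rho^{(1)}(\alpha)$ is at most $\alpha$. Since $R^\star_\rho=\inf_\Psi R_\rho(\Psi)$ is an infimum over all tests, exhibiting one such test immediately gives $R^\star_{\rho^{(1)}(\alpha)}\leq \alpha$, so that $\rho^{(1)}(\alpha)$ belongs to the set $\{\rho>0:R^\star_\rho\leq\alpha\}$ and hence $\rho^\star(\alpha)\leq \rho^{(1)}(\alpha)$ directly from the definition \eqref{def:minimax_separation}. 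No monotonicity of $\rho\mapsto R^\star_\rho$ is required, only this membership.

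First I would fix $\rho=\rho^{(1)}(\alpha)$ and couple the threshold to the separation by setting $\threshold=\rho^2/2$. This choice is the crux: it keeps $\rho^2>\threshold>0$, so that the quantitative bound \eqref{eq:non_sparse} of Theorem \ref{th:non_sparse} applies, and it balances the orders of the first two terms, which both become multiples of $\varmodel/\rho^4$. Substituting $\threshold=\rho^2/2$ into \eqref{eq:non_sparse} and using $\rho^2-\threshold=\rho^2/2$, the first two terms collapse to $4\varmodel/\rho^4+16\varmodel/\rho^4=20\varmodel/\rho^4$, while the exponent simplifies to $-(\rho^2/2)^2/(32\scale^2\Delta_T\rho^2)=-\rho^2/(128\scale^2\Delta_T)$. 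Thus $R_\rho(\Psi_{\Tau_1}(\rho^2/2))\leq 20\varmodel/\rho^4+\expp{-\rho^2/(128\scale^2\Delta_T)}$.

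The last step is to split the budget $\alpha$ between the two surviving terms and read off the two branches of the maximum defining $\rho^{(1)}(\alpha)$. Imposing $20\varmodel/\rho^4\leq\alpha/2$ is exactly $\rho\geq(40\varmodel/\alpha)^{1/4}$, the first argument of the maximum; imposing $\expp{-\rho^2/(128\scale^2\Delta_T)}\leq\alpha/2$ is exactly $\rho^2\geq 128\scale^2\Delta_T\log(2/\alpha)$, i.e. $\rho\geq 8\scale\sqrt{2\Delta_T\log(2/\alpha)}$ since $\sqrt{128}=8\sqrt2$, the second argument. Therefore, for $\rho=\rho^{(1)}(\alpha)$ both constraints hold at once, each surviving term is at most $\alpha/2$, and $R_\rho(\Psi_{\Tau_1}(\rho^2/2))\leq\alpha$, which closes the argument.

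Being an explicit corollary of \eqref{eq:non_sparse}, the statement presents no genuine obstacle; the only point needing a little care is the coupling $\threshold=\rho^2/2$, which simultaneously secures the hypothesis $\rho^2>\threshold>0$ of Theorem \ref{th:non_sparse} and renders the polynomial part free of any further tuning. I would also remark that this construction uses a threshold depending on the target separation $\rho$, which is harmless here: we are bounding the minimax separation rate, and the infimum in \eqref{def:minimax_testing_risk} ranges over all tests, in particular over $\Psi_{\Tau_1}(\rho^2/2)$.
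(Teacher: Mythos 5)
Your proposal is correct and follows essentially the same route as the paper: the paper likewise takes $\threshold=\rho^2/2$ in Theorem \ref{th:non_sparse}, collapses the first two terms to $20\varmodel/\rho^4$, simplifies the exponential term to $\expp{-\rho^2/(128\scale^2\Delta_T)}$, and splits the budget $\alpha$ to read off the two branches of the maximum. The only difference is cosmetic: you make explicit the (correct) observation that exhibiting one test with risk at most $\alpha$ at $\rho=\rho^{(1)}(\alpha)$ suffices by the definition of $\rho^\star(\alpha)$ as an infimum, which the paper leaves implicit.
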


\begin{proof}[Proof of Corollary \ref{th:non_sparse_R}]
	This result is a direct consequence of Theorem \ref{th:non_sparse} by taking  the threshold $\threshold$ of the test therein equal to $\rho^2/ 2$.
	Then, we have that for $\rho > 0$:
	\[
	R_\rho^\star  \leq R_\rho \left (\Psi_{\Tau_1}(\rho^2/2) \right )\! \leq \! \frac{4\varmodel}{\rho^4} + \frac{16 \, \varmodel}{\rho^4} + \expp{ - \rho^2/(128\scale^2 \Delta_T)}=  \frac{20 \, \varmodel}{\rho^4} + \expp{ - \rho^2/(128\scale^2 \Delta_T)}.
	\]
	We deduce that $R_\rho^\star \leq \alpha$ for any $\alpha \in (0,1)$ whenever the separation $\rho$ satisfies: 
	\begin{equation}
	\label{rho:non_sparse_R}
	\rho \geq \left(\frac{40\varmodel}{\alpha} \right )^{\frac{1}{4}} \vee \scale\sqrt{128 \, \Delta_T \log\left (\frac{2}{\alpha} \right )}.
	\end{equation}
	This implies \eqref{eq:non_sparse_R}.
\end{proof}

In the following theorem, we give a bound of the maximal testing risk associated to the test $\Psi_{\Tau_2}(\threshold)$ using $\Tau_2$ in \eqref{def:test_statistic} for solving the test problem \eqref{eq:H0_goodness_of_fit}. The statistic $\Tau_2$ is defined using estimators of the model parameters $(\beta^\star,\vartheta^\star)$. In view of recovering the latter, we assume that the minimal distance $\delta^\star$ (resp. $\delta^0$) is large enough so that Point $\ref{hyp:separation}$ of Assumption \ref{hyp-estimation} is satisfied for the components of $\vartheta^\star$ (resp. $\vartheta^0$).

Recall the definitions of $g_\infty$ and $L_2$ given by \eqref{eq:def-g}  and \eqref{def:M_h_M_g}, that $|\Theta_T|$ denotes the Euclidean length of the compact set $\Theta_T$ and  $\Sigma$ defined in $\ref{hyp:separation}$ of Assumption \ref{hyp-estimation}.

\begin{theorem}
	\label{th:higly_sparse}
	Let $T \in \N$, $s^0  \in \N$ and choose $K\in\N$ such that $s_0
        \leq K$. Let also $\eta \in (0,1)$ and  $r \in \left
          (0,1/\sqrt{2 \, g_\infty \, L_{2}} \right )$. Let
        \begin{equation}
          \label{eq:sep-delta-43}
          \delta^\star \geq \sigma_T\, \Sep
          \quad\text{and}\quad
        \delta^0 \geq \sigma_T\, \Sigma(\eta,r,s^0).
        \end{equation} Assume we observe the random element $y$ of $L^2(\lambda_T)$ under the regression model (\ref{eq:model}) with unknown parameters  $s \in \N$ such that $s \leq K$, $\beta^\star \in (\R^*)^s$ and $\vartheta^\star= \left (
	\theta_1^\star,\cdots,\theta_s^\star\right ) \in \Theta_T^s(\delta^\star)$.  Let $\beta^0 \in (\R^*)^{s^0}$ and $\vartheta^0 =(\theta_1^0,\cdots,\theta_{s^0}^0) \in \Theta_T^{s^0}(\delta^0)$. Suppose that Assumption~\ref{hyp-estimation} holds for the sets $\cq^\star = \{\theta^\star_1,\cdots,\theta_{s}^\star\}  \subset \Theta_T$ of cardinal $s$ and  $\cq^0  = \{\theta^0_1,\cdots,\theta_{s^0}^0\} \subset \Theta_T$  of cardinal $s^0$. Suppose also that the noise process $w_T$ satisfies Assumption~\ref{hyp:bruit} for a noise level $\scale>0$ and a decay rate for the noise variance $\Delta_T>0$.
	
	Then, there exist finite positive constants $\mathcal{C}_0$, 
	$\mathcal{C}_1$, $\mathcal{C}_2$, depending on $r$ and on the function $\Funk$,  such that for the tuning parameter $\kappa$:
	\begin{equation}
	\label{eq:choice_kappa}
	\kappa \geq \mathcal{C}_1 \scale \sqrt{\Delta_T  \log (\tau)}, \quad \text{for some } \tau>1,
	\end{equation}
	the test $\Psi_{\Tau_2}$ using $\Tau_2$ in \eqref{def:test_statistic} satisfies:
	\begin{equation}
	\label{eq:R-sparse-case}
	R_\rho\left(\Psi_{\Tau_2}(\threshold)\right)  \leq 2 \, \mathcal{C}_2 \left (  \frac{|\Theta_T|}{ \sigma_T  \, \tau \sqrt{\log (\tau)} }\vee
	\frac{1}{\tau}\right ), 
	\end{equation}
	for any threshold $\threshold$ and any separation $\rho$ satisfying:
	\begin{equation}
	\label{eq:ineq-rho-t}
	0 < t, \quad \mathcal{C}_0 \sqrt{s^0} \, \kappa \leq \sqrt{\threshold} < \rho \quad \text{  and } \quad  \sqrt{\threshold} + \mathcal{C}_0 \sqrt{s} \, \kappa \leq \rho .
	\end{equation}  
\end{theorem}
The proof can be found in Section~\ref{sec:proof_thm_3_3}. 
\begin{remark}[On the bound $K$]
	The bound $K$ on $s$ is assumed to be known. It is needed to formulate the optimization problem \eqref{eq:generalized_lasso} whose solutions are the estimators of $\beta^\star$ and $\vartheta^\star$. However, we stress that  the constants $\mathcal{C}_0$, 
	$\mathcal{C}_1$, $\mathcal{C}_2$ and the bound on the maximal testing risk do not depend on $K$. Thus, $K$ can be taken arbitrarily large.
\end{remark}

In the next Corollary, we obtain an additionnal upper bound on the minimax separation rate.

\begin{corollary}
	\label{th:higly_sparse_R}
	Under the framework and the assumptions of Theorem \ref{th:higly_sparse} and provided that $|\Theta_T|/\sigma_T \geq 1$, there exist finite positive constants $c$ and $C$, depending on $r$ and the function $\Funk$, such that the minimax separation rate for the test problem \eqref{eq:H0_goodness_of_fit} verifies for any $\alpha \in (0,1)$:
	\begin{equation}
	\label{eq:higly_sparse_R}
	\rho^\star\left ( \alpha \right ) \leq  \rho^{(2)}(\alpha), \quad  \rho^{(2)}(\alpha):= C \,  \scale \, \sqrt{(s\vee s^0\vee 1) \Delta_T \log \left (\frac{c \, |\Theta_T|}{\alpha \, \sigma_T} \right )}.
	\end{equation}
\end{corollary}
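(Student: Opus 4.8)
The plan is to obtain the corollary directly from Theorem~\ref{th:higly_sparse} by optimizing over the three free quantities at our disposal, namely the parameter $\tau$, the tuning parameter $\kappa$ and the threshold $\threshold$, so as to make the maximal testing risk of $\Psi_{\Tau_2}(\threshold)$ at most $\alpha$ while keeping the separation $\rho$ as small as possible. The conclusion will then follow from the definitions \eqref{def:minimax_testing_risk} and \eqref{def:minimax_separation}: exhibiting one test with risk $\leq \alpha$ at level $\rho$ yields $R^\star_\rho \leq \alpha$, hence $\rho^\star(\alpha) \leq \rho$.

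First I would choose $\tau$ to force the right-hand side of \eqref{eq:R-sparse-case} below $\alpha$. Writing $M = |\Theta_T|/\sigma_T \geq 1$ and recalling $\mathcal{C}_2 \geq 3$, I would set $\tau := 2\mathcal{C}_2 M/\alpha$, which exceeds $6$ (so $\tau > 1$ and $\log\tau \geq 1$). Since $M \geq 1$ and $\sqrt{\log\tau} \geq 1$, both $1/\tau$ and $M/(\tau\sqrt{\log\tau})$ are then at most $\alpha/(2\mathcal{C}_2)$, so \eqref{eq:R-sparse-case} gives $R_\rho(\Psi_{\Tau_2}(\threshold)) \leq \alpha$. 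With $\tau$ fixed, I would take the minimal admissible tuning parameter $\kappa = \mathcal{C}_1 \scale\sqrt{\Delta_T \log\tau}$ allowed by \eqref{eq:choice_kappa}, and select the threshold through $\sqrt{\threshold} = \mathcal{C}_0 \kappa\sqrt{s^0 \vee 1}$ and the separation through $\rho = 2\mathcal{C}_0 \kappa\sqrt{s \vee s^0 \vee 1}$.

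It then remains to check that this pair $(\threshold, \rho)$ satisfies the three requirements in \eqref{eq:ineq-rho-t}. The choice of $\sqrt{\threshold}$ gives $\threshold > 0$ and $\sqrt{\threshold} \geq \mathcal{C}_0\sqrt{s^0}\,\kappa$; moreover $\sqrt{\threshold} = \mathcal{C}_0\kappa\sqrt{s^0\vee 1} < \rho$, and using $\sqrt{s^0\vee 1} + \sqrt{s} \leq 2\sqrt{s\vee s^0 \vee 1}$ one gets $\sqrt{\threshold} + \mathcal{C}_0\sqrt{s}\,\kappa \leq \rho$. Substituting $\log\tau = \log\bigl(2\mathcal{C}_2 |\Theta_T|/(\alpha\sigma_T)\bigr)$ into $\rho$ yields
\[
\rho = 2\mathcal{C}_0\mathcal{C}_1\,\scale\,\sqrt{(s\vee s^0 \vee 1)\,\Delta_T\,\log\!\left(\frac{2\mathcal{C}_2\,|\Theta_T|}{\alpha\,\sigma_T}\right)},
\]
which is exactly $\rho^{(2)}(\alpha)$ with $C = 2\mathcal{C}_0\mathcal{C}_1$ and $c = 2\mathcal{C}_2$, so \eqref{eq:higly_sparse_R} follows.

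This argument is essentially bookkeeping, and I do not expect a genuine obstacle; the only delicate point is the \emph{simultaneous} fulfilment of all the constraints. This is why I would choose the threshold at scale $\sqrt{s^0\vee 1}$ rather than $\sqrt{s^0}$ (to keep $\threshold > 0$ in the detection case $s^0 = 0$) and the separation at twice the $\sqrt{s\vee s^0\vee 1}$ scale (to absorb both the type~I constraint $\mathcal{C}_0\sqrt{s^0}\kappa \leq \sqrt{\threshold}$ and the type~II constraint $\sqrt{\threshold}+\mathcal{C}_0\sqrt{s}\kappa \leq \rho$ at once). The hypothesis $|\Theta_T|/\sigma_T \geq 1$ is used precisely so that a single choice of $\tau$ controls both terms of the maximum in \eqref{eq:R-sparse-case}.
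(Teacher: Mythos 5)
Your proof is correct and follows essentially the same route as the paper's: fix $\kappa$ at its minimal admissible value, set the threshold at scale $\mathcal{C}_0^2\,(s^0\vee 1)\,\kappa^2$, verify the constraints \eqref{eq:ineq-rho-t}, and choose $\tau$ proportional to $|\Theta_T|/(\alpha\,\sigma_T)$ so that the risk bound \eqref{eq:R-sparse-case} drops below $\alpha$. The only cosmetic difference is that the paper takes $c=(2\mathcal{C}_2)\vee \mathrm{e}$ rather than $c=2\mathcal{C}_2$, which secures $\log\tau\geq 1$ without relying on your claim that $\mathcal{C}_2\geq 3$ (the paper only guarantees $\mathcal{C}_2=2\sqrt{g_\infty}\,\mathcal{C}_2'$ with $\mathcal{C}_2'>3$, which could be small if $g_\infty$ is); your argument goes through verbatim with that replacement.
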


\begin{remark}[On the condition $|\Theta_T|/\sigma_T \geq 1$]
	We recall that the set $\Theta_T$ is a compact subset of $\Theta$. In the case where $\Theta$ is the torus $\R/\Z$, $\Theta_T = \Theta$ and the scale parameter $\sigma_T$ tends towards $0$ when $T$ grows to infinity, the condition  $|\Theta_T|/\sigma_T \geq 1$ is satisfied for $T$ large enough. This condition also holds for $T$ large enough in the  Gaussian spikes deconvolution example, with the particular choices for $\Theta_T$ and $\sigma_T$  from  Section \ref{ex:gaussian-spike}, where $\Theta = \R$, $\lim_{T \rightarrow + \infty}\, \Theta_T  = \Theta$ and $\lim_{T \rightarrow + \infty} \, \sigma_T = 0$.
\end{remark}

\begin{proof}[Proof of Corollary \ref{th:higly_sparse_R}]
	Notice that all the assumptions of Theorem \ref{th:higly_sparse} are in force. The result is a direct consequence of Theorem \ref{th:higly_sparse}.
	We fix the tuning parameter $\kappa = \mathcal{C}_1 \scale \sqrt{ \Delta_T \log (\tau)}$ by taking the equality in \eqref{eq:choice_kappa}.
	Then, for
	\begin{equation}
	\label{eq:rho-t:higly_sparse}
	\rho \geq \mathcal{C}_0 \, \sqrt{s\vee 1} \,\kappa  + \sqrt{\threshold} \quad \text{  and } \quad \threshold = \mathcal{C}_0^2 \,  (s^0 \vee 1) \, \kappa^2,
	\end{equation}
	we have \eqref{eq:ineq-rho-t} (in particular $0 < \threshold < \rho$) and by Theorem \ref{th:higly_sparse} for $\tau > 1$:
	\[
	R_\rho^\star \leq R_\rho \left (\Psi_{\Tau_2}(\threshold) \right ) \leq   2 \mathcal{C}_2  \, \left (  \frac{|\Theta_T|}{ \sigma_T  \, \tau \sqrt{\log (\tau)} }\vee
	\frac{1}{\tau}\right ),
	\]
	where the finite positive constants $\mathcal{C}_0$, $\mathcal{C}_1$, $\mathcal{C}_2$, from  Theorem \ref{th:higly_sparse} depend on $r$ and $\Funk$.
	
	Then, taking $\tau =  c |\Theta_T|/(\alpha \sigma_T)$ with $c =  (2\mathcal{C}_2) \vee \rm e $ and using that by assumption $|\Theta_T|/\sigma_T \geq 1$,  we get for $\rho \geq \sqrt{2}\mathcal{C}_0 \mathcal{C}_1 \scale \sqrt{(s + s^0) \vee 2} \sqrt{ \Delta_T \log (c |\Theta_T|/(\alpha \sigma_T))}$ and $\alpha \in (0, 1)$ that $R_\rho^\star \leq  \alpha$.
	We readily deduce \eqref{eq:higly_sparse_R} with $C = 2\mathcal{C}_0 \mathcal{C}_1$.	
\end{proof}

\begin{remark}[Combining the upper bounds of Corollaries \ref{th:non_sparse_R} and \ref{th:higly_sparse_R}]
	Let $\alpha \in (0,1)$. Suppose that the assumptions of Corollaries \ref{th:non_sparse_R} and \ref{th:higly_sparse_R} hold.  
	Previous results show that each procedure may perform better than the other one in convenient regimes of the parameters, involving the unknown parameter $s$. In order to aggregate the two procedures into an automatic one, we take the maximum of the two test procedures. This aggregated test procedure rejects as soon as at least one of the procedures rejects, and accepts otherwise. 
	
	More precisely, let $\rho^{(1)}(\alpha/2)$ be defined by \eqref{eq:non_sparse_R} with $\alpha$ replaced by $\alpha/2$ and  set $t^{(1)} = (\rho^{(1)}(\alpha/2))^2/2$; and let  $\rho^{(2)}(\alpha/2)$ be defined in \eqref{eq:higly_sparse_R} and $t^{(2)}$ be given by \eqref{eq:rho-t:higly_sparse} with $\alpha$ replaced by $\alpha/2$. Then,  Corollaries \ref{th:non_sparse_R} and \ref{th:higly_sparse_R} imply that $R_{\rho^{(1)}} \left (\Psi_{\Tau_1}(\threshold^{(1)}) \right ) \leq \alpha/2$ and  $R_{\rho^{(2)}} \left (\Psi_{\Tau_2}(\threshold^{(2)}) \right ) \leq \alpha/2$. We define the test: 
	\[
	\Psi^{\max} = \max(\Psi_{\Tau_1}(\threshold^{(1)}),\Psi_{\Tau_2}(\threshold^{(2)})).
	\]	
	It is straightforward to see that the type I error probability satisfies:
	\[
	\sup_{(\beta^\star, \vartheta^\star) \in H_0}\E_{(\beta^\star,\vartheta^\star)}[\Psi^{\max}] \leq \alpha.
	\]
	Moreover, we have for $\rho^{\min}(\alpha) = \rho^{(1)}(\alpha/2) \wedge \rho^{(2)}(\alpha/2)$ the following bound on the type II error probability:
	\begin{equation*}
	\sup_{(\beta^\star, \vartheta^\star) \in H_1(\rho^{\min})} \E_{(\beta^\star,\vartheta^\star)}[1-\Psi^{\max}] \leq \alpha/2.
	\end{equation*}
	Therefore, we deduce an upper bound on $\rho^\star(\alpha)$ of order $\rho^{\min}(\alpha)$, that is:
	\begin{equation}
	\label{eq:rates_detection}
	\rho^{\min}(\alpha) = \min\left (\left ( \frac{80 \varmodel}{\alpha} \right )^{1/4} , C\scale \,\sqrt{(s \vee s^0 \vee 1) \Delta_T \log \left (\frac{2 \, c \, |\Theta_T|}{\alpha \, \sigma_T} \right )} \right ),
	\end{equation}
	for a positive constant $c \geq 2$.
	We identify two regimes depending on whether the number of features of the observed signal 
        is sufficiently small or not. 
	Indeed, we notice that when $\alpha$ is fixed and:
	$$ 
	s\vee s^0 \vee 1 \ll \left(\frac{\varmodel}{\alpha} \right )^{1/2} \cdot \left (\scale^2\Delta_T \log \left (\frac{ 2 \, c \, |\Theta_T|}{\alpha \, \sigma_T} \right ) \right )^{-1},
	$$ Corollary \ref{th:higly_sparse_R} yields a sharper upper bound on the separation rate than Corollary \ref{th:non_sparse_R}.
\end{remark}

\subsection{Minimax separation rates for signal detection}
\label{sec:signal-detection}
We   illustrate   our  results   on   a   simple  model   motivated   by
\cite{Ingster2010}  for   sparse  linear   regression.  We   consider  a
discrete-time  process $y$  over  a regular  grid  $t_1<\cdots< t_T$  on
$\Theta= \R  / \Z$ with grid  step $\Delta_T = 1/T$.  We set $\lambda_T$
and         $w_T$          as         in~\eqref{eq:def-lambdaT-reg-grid}
and~\eqref{eq;def-wT-reg-grid} from Section \ref{sec:discrete_noise}. We
recall that  $\varmodel = 2  \scale^4 \Delta_T^2 T$ where  $\scale>0$ is
the  noise level.  In  the  following, we  assume  without  any loss  of
generality that $\scale=1$.  \medskip

Let us consider the framework of signal detection when $s^0=0$.
Under the assumptions of Corollary \ref{th:higly_sparse_R}, the test problem \eqref{eq:H0_goodness_of_fit} reduces to: 
\begin{equation}
\label{eq:signal_detection}
\begin{cases}
H_0 : &\beta^\star =0,\\
H_1 (\rho) : &(\beta^\star, \vartheta^\star) \in (\R^*)^s \times
\Theta_T^{s}(\delta^\star) \quad \text{s.t.}\quad  \norm{\beta^\star \Phi_{T}(\vartheta^\star)}_{L^2(\lambda_T)}\geq \rho.
\end{cases}
\end{equation}
Moreover,  under the  assumptions  of Corollary  \ref{th:higly_sparse_R}
(which   in  particular   gives   a  lower   bound  on   $\delta^\star$,
see~\eqref{eq:sep-delta-43})  and  with  the   same  arguments  used  to
establish      \eqref{eq:small-eig}      in       the      proof      of
Lemma~\ref{lem:identifiability}, we can show that:
\begin{equation}
5/6 \leq C_{\min} := \min_\beta \, \frac{\norm{\beta\Phi_{T}(\vartheta^\star)}_{L^2(\lambda_T)}}{\norm{\beta}_{\ell_2}}, \quad \! \! C_{\max} := \max_\beta \, \frac{\norm{\beta\Phi_{T}(\vartheta^\star)}_{L^2(\lambda_T)}}{\norm{\beta}_{\ell_2}} \leq 7/6. 
\end{equation}
Therefore, the separation in the alternative hypothesis $H_1(\rho)$ can be formulated as a lower bound on $\norm{\beta^\star}_{\ell_2}$ since we have: $$ C_{\min }\norm{\beta^\star}_{\ell_2} \leq \norm{\beta^\star \Phi_{T}(\vartheta^\star)}_{L^2(\lambda_T)} \leq C_{\max}\norm{\beta^\star}_{\ell_2}.$$

We set $\Theta_T = \Theta$ and thus $|\Theta_T| = 1$. We get from \eqref{eq:rates_detection} the following upper bound on $\rho^\star(\alpha)$ for any $\alpha \in (0,1)$:
\begin{equation}
\rho(\alpha) = C \min\left ( \frac{1}{(\alpha T)^{\frac{1}{4}}} , \sqrt{\frac{s}{T}\log \left (\frac{c}{\alpha \, \sigma_T} \right )} \right ),
\end{equation}
with $C$ a finite positive constant.
Let $(\alpha_{T}, T \geq 1)$ be a $(0, 1)$-valued sequence  which converges  to zero when $T$  grows to infinity. We deduce that:
\[
\lim_{s,T \to +\infty} R_{ \rho(\alpha_{T})}^\star = 0.
\]
By letting the sequence $(\alpha_{T}, T \geq 1)$ converge towards $0$ as slow as we want, we deduce that for a sequence of separations $(\rho_{s,T}, \, T \geq 1, s \geq 1)$ such that:
\begin{equation}
\label{eq:rate-continuous-dico}
\lim_{s,T \to +\infty} \frac{\rho_{s,T}}{  \frac{1}{ T^{\frac{1}{4}}} \wedge \sqrt{\frac{s}{T}\log \left (\frac{c}{\sigma_T} \right )} }= + \infty ,
\end{equation}
we have { $\rho_{s,T}\geq  \rho(\alpha_T)$ and thus:}
\begin{equation*}
\lim_{s,T \to +\infty} R_{\rho_{s,T}}^\star = 0.
\end{equation*}
Hence, we have obtained an asymptotic upper bound of the minimax separation associated to the detection of a finite linear combination of features issued from a continuous dictionary. 
\medskip

We now compare this upper bound  to the asymptotic lower bound obtained in the case where  the dictionary contains a finite number of features instead of a continuum. 
Assume that the dictionary is fixed, known and contains $p$ features parametrized by the parameters in the known and fixed set $ \cq^0=\{\theta_1^0,\cdots, \theta_p^0\} \subset \Theta_T$.
We consider the high dimensional linear regression model:
\begin{equation*}
y = \beta^\star \Phi_T(\vartheta^0) + w_T \quad  \text{in $L^2(\lambda_T)$},
\end{equation*}
with $\vartheta^0 = (\theta_1^0,\cdots,\theta_p^0) \in \Theta_T^p$ and where $\beta^\star \in \R^p$ is a $s$-sparse vector. Notice that in this model the entries of $\beta^\star$ can take the value 0.  
The high dimension comes from the fact that $p$ can be much larger than $T$. Under coherence assumptions on the finite dictionary and 
for a sequence of separations $(\rho_{s,T}, \, T \geq 1, s \geq 1)$ such that:
\begin{equation}
\label{eq:rate-finite-dico}
\lim_{s,T \to +\infty}  \frac{\rho_{s,T}}{\frac{1}{T^{\frac{1}{4}}} \wedge \sqrt{\frac{s}{T}\log (p)} \wedge \frac{p^{\frac{1}{4}}}{\sqrt{T}} }=0,
\end{equation}
the authors of \cite{Ingster2010} showed for different hypotheses on the design matrix $\Phi_T(\vartheta^0)$ that:
$$
\lim_{s,T \to +\infty} R_{\rho_{s,T}}^\star = 1.
$$
It means that the hypotheses \eqref{eq:signal_detection} cannot be distinguished asymptotically when the separation converges to zero faster than the rate given by \eqref{eq:rate-finite-dico}.

\begin{remark}[Comparison between the rates obtained for finite and continuous dictionaries]
In the high-dimensional linear case (i.e., \( T \leq p \)), given that \( 1 /T^{1/4} \leq p^{1/4}  / \sqrt{T} \), the asymptotic minimal intensity allowing signal detection given by \eqref{eq:rate-finite-dico} becomes:
$$
\frac{1}{T^{\frac{1}{4}}} \wedge \sqrt{\frac{s}{T}\log (p)}.
$$
This rate matches, up to a logarithmic factor, the rate given by
\eqref{eq:rate-continuous-dico} for our more general model. There are two distinct regimes: the sparse case (\( s \leq \sqrt{T} / \log(p) \)) and the non-sparse case. Additionally, the magnitude of the size \( p \) of the finite dictionary plays an analogous role as the quantity \( 1/\sigma_T \) that appears in the logarithmic terms. The term \( 1/\sigma_T \) is of the order of the maximal number of shifted elements permissible in our mixture, considering a separation condition of order $\sigma_T$ and shift parameters within a compact set possibly growing with $T$.
\end{remark}

\section{Goodness-of-fit of the dictionary}
\label{sec:goodness-of-fit-features} 

In spectroscopy, a prescribed material has known chemical components and a list of $s_0$ corresponding location parameters of the features is provided. From a sampled material we want to decide whether its chemical components are included in the prescribed list. The linear coefficients are non-negative in this case and they are not given, which makes the null hypothesis composite, that is, fixed location parameters and varying positive linear coefficients.
We generalize this setup to real valued linear coefficients. Under the null hypothesis the location parameters are still fixed, but the linear coefficients vary with fixed sign.

More precisely, let $s^0 \in \N$ and let $\cq^0 = \{\theta_1^0, \cdots, \theta_{s_0}^0\} \subset \Theta_T$ be a set of known location parameters pairwise separated by a distance $\delta^0 \geq 0$ so that the model is identifiable, see Lemma \ref{lem:identifiability}.  We set the vector $\vartheta^0 = (\theta_1^{0},\cdots, \theta_{s^0}^0)$.
We include in the null hypothesis all linear combinations:
$$
\sum_{j=1}^{s^0} \beta_j^\star \varphi_T(\theta_j^0)
$$
with $ \beta_j^\star $ being either 0 or with the same sign as $\beta_j^0$, for all $j$ from 1 to $s^0$. Thus we split the set $\cq^0$ into $\cq^{0,+}$ and $\cq^{0,-}$, those parameters $\theta_k^0$ associated to $\beta_k^0 >0$ and to $\beta_k^0<0$, respectively: 
$$
\cq^{0, \epsilon}=\{\theta^0_k\in \cq^0\, \colon\, \epsilon 
\beta^0_k > 0\}, \quad \epsilon  \in \{+,-\}.
$$
Let $s \in \N^*$. Assume that we observe a random element $y$ issued from the model \eqref{eq:model} with linear coefficients $\beta^\star\in (\R^*)^s$ and non-linear parameters $\vartheta^\star = ( \theta_1^\star, \cdots,\theta_s^\star) \in \Theta_T^s$.
We  test whether the unknown set:
$$
\cq^{\star, \epsilon}= \{\theta^\star_k\in \cq^\star\, \colon\,
\epsilon  \beta^\star_k>0 \} \text{ is a subset of }\cq^{0,
  \epsilon}\text{ for each } \epsilon\in \{+, -\}.
$$
If $s^0=0$, this amounts to testing that $\cq^\star$ is empty, which corresponds to the signal detection framework presented in Section \ref{sec:goodness-of-fit} in the case $s^0=0$. Hence, we shall assume in this section that $s_0 \geq 1$. 

For example, in spectroscopy, $\cq^{0, -}$ is empty because all linear parameters are positive and this amounts to testing that the present chemical elements are in the prescribed list $\cq^{0}$ but they may appear with various positive linear coefficients (amplitudes). Under the alternative, other chemical components are present (located at unknown frequencies not in the prescribed list).

\subsection{A measure of discrepancy between dictionaries}
We define  the closed balls centered at $\theta\in
\Theta_T$ with radius $r$ by:
\[
\mathcal{B}_{T}(\theta,r) = \left\{ \theta' \in \Theta_T\,\colon\, 
\dT(\theta, \theta') \leq r\right\}  \subseteq \Theta_T.
\]
Let us define for $\epsilon\in \{+, -\}$ the set of indices
$\mathcal{I}^\epsilon = \{k\in \{1, \ldots, s^0\}, \, \epsilon \beta^0_k > 0\}$.
We introduce for $r> 0$,  $\epsilon\in \{+, -\}$ and $k \in
\mathcal{I}^\epsilon$,  the set $S_k^\epsilon(r)$ gathering the indices of the elements of $\cq^{\star, \epsilon}$ that are close to the element $\theta_k^0$ of $\cq^{0, \epsilon}$:
\begin{equation}
\label{def:sets_S0}
S_k^\epsilon(r) = \left \{\ell\in \{1,\cdots,s\}: \theta^\star_\ell \in
  \mathcal{B}_{T}(\theta_k^0,r) \text{ and } \sgn (\beta^\star_\ell)=1 \right \}.
\end{equation}
Notice that the sets $S_k^\epsilon(r)$ can be empty. Furthermore, we assume that $ r < \min_{\ell\neq k} \dT(\theta^0_\ell,\theta^0_k)/2$  so that
the sets $S_k^\epsilon(r)$ with $\epsilon\in \{+, -\}$ and $k \in \mathcal{I}^\epsilon$ are pairwise disjoint.
We also set:
\[
S(r) = \bigcup_{\epsilon\in \{+, -\}} S^{\epsilon}(r)\quad \text{with} \quad S^\epsilon(r) =  \bigcup_{k \in \mathcal{I}^\epsilon} S_{k}^\epsilon(r).
\]

We now define a discrepancy measure between the model and any approximation by a linear combination of features having their non-linear parameters in $\cq^0 $ and the linear parameters with the same signs, for $r >0$:
\begin{equation*}
\mathcal{D}_{T,r}(\beta^\star,\vartheta^\star,v^0, \vartheta^0)  =
\sum_{\epsilon\in \{+, -\}}\, \sum_{k \in \mathcal{I}^\epsilon} \,\sum\limits_{\ell\in
	S_k^ \epsilon(r)}  
| \beta_{\ell}^\star| \, \dT(\theta_{\ell}^\star,\theta_{k}^{0})^{2} + \sum\limits_{k \in
	S(r)^{c}}|\beta_{k}^\star|,
\end{equation*}
where $S(r)^{c}$ denotes the complementary set of $S(r)$ in $\{1,
\ldots, s\}$ and $v^0=(v^0_1, \ldots, v^0_{s^0})$ contains the signs of
all linear coefficients $\beta^0$, $v_j^0 = \sgn  (\beta_j^0)$. 
Notice that $\mathcal{D}_{T,r}(\beta^\star,\vartheta^\star,v^0, \vartheta^0) = 0$ if and only if $\cq^{\star,+} \subseteq \cq^{0, +}$ and  $\cq^{\star,-} \subseteq \cq^{0, -}$. 

\subsection{The testing hypotheses}

We shall test the following hypotheses:
\begin{equation}
\label{eq:H0-dico}
\begin{cases}
H_0 : &(\beta^\star, \vartheta^\star) \in (\R^*)^s \times \Theta_T^{s}(\delta^\star), \quad  \quad  \cq^{\star,+} \subseteq \cq^{0, +} \text{ and  } \cq^{\star,-} \subseteq \cq^{0, -},\\ 
H_1(\rho) :  &(\beta^\star, \vartheta^\star) \in (\R^*)^s\times \Theta_T^{s}(\delta^\star) \quad \text{  and } \quad \mathcal{D}_{T,r}(\beta^\star,\vartheta^\star,v^0, \vartheta^0) \geq\rho,
\end{cases}
\end{equation}
where $\rho$ and $\delta^\star$ are separation parameters depending \emph{a priori} on $T$, $s$ and $s^0$ that need to be evaluated.
Notice that the null hypothesis is also composite.
We recall the definitions \eqref{def:minimax_testing_risk} and \eqref{def:minimax_separation} of the minimax testing risk $R_\rho^\star$ and the minimax separation $\rho^\star$. In the following, we give upper bounds on the testing risk and on the minimax separation $\rho^\star(\alpha)$ for any $\alpha \in (0,1)$.

\subsection{Main result}
In this section, we build a test for  \eqref{eq:H0-dico}.
Under Assumptions \ref{hyp:reg-f} and \ref{hyp:g>0}, we define the element of $L^2(\lambda_T)$:
\begin{equation}
\label{eq:def_p0}
p_{0} = \sum\limits_{k=1}^{s^0} \alpha_{k}
\phi_{T}(\theta^{0}_{k})
+ \sum\limits_{k=1}^{s^0} \coeff_{k} \, 
\tilde D_{1,T}[\phi_{T}](\theta^{0}_{k}) ,
\end{equation}
where $\alpha, \coeff \in \R^{s^0}$ solve the system:
\begin{equation}
\label{matrix_system1}
\left \langle \phi_T(\theta^0_k),p_0 \right \rangle_{L^2(\lambda_T)}= \sgn (\beta^0_k) \text{  and }   \left \langle \partial_\theta \phi_T(\theta^0_k),p_0 \right \rangle_{L^2(\lambda_T)} = 0 , \quad \forall k \in \{1,\cdots,s^0\} .
\end{equation}

\begin{remark}
	The element $p_0$ of $L^2(\lambda_T)$ coincides with the vanishing derivative pre-certificate which appears in \cite[Section 4]{duval2015exact} and is the solution of \eqref{matrix_system1} with minimal norm $\norm{p_0}_{L^2(\lambda_T)}$. We state in Lemma~\ref{lem:interpolating_certificate} the existence of such function and prove its further properties used in the following result.
\end{remark}

Using the estimator $\hat{\beta}$ from \eqref{eq:generalized_lasso} for a given value of the tuning parameter $\kappa$, we  define the test statistic:
\begin{equation}
\label{def:test_statistic_dico}
\Tau_3 = \norm{\hat{\beta}}_{\ell_1} - \left \langle y,p_0 \right \rangle_{L^2(\lambda_T)}.
\end{equation}
and the corresponding test $\Psi_{\Tau_3}(\threshold) = \ind_{\{ |\Tau_3| > \threshold\}}$. 
Thus we use the certificate function as a filter of the signal and note that $\E\left \langle y,p_0 \right \rangle_{L^2(\lambda_T)} = \norm{\beta^\star}_{\ell_1}$ under the null hypothesis.

\begin{theorem}
	\label{theorem:inclusion}
	Let $T \in \N$, $s^0  \in \N^*$ and choose $K\in\N$ such that
        $s_0 \leq K$. Let also $\eta \in (0,1)$ and  $r \in \left
          (0,1/\sqrt{2 \, g_\infty \, L_{2}} \right )$. Let:
        \[
          \delta^\star \geq \sigma_T\, \Sep \quad\text{and}\quad
          \delta^0 \geq \sigma_T\, \Sigma(\eta,r,s^0).
        \]
        Assume  we observe  the random  element $y$  of $L^2(\lambda_T)$
        under  the   regression  model  (\ref{eq:model})   with  unknown
        parameters   $s    \in   \N^*$    such   that   $s    \leq   K$,
        $\beta^\star            \in             (\R^*)^s$            and
        $\vartheta^\star=                     \left                    (
          \theta_1^\star,\cdots,\theta_s^\star\right        )        \in
        \Theta_T^s(\delta^\star)$.  Let $v^0  \in  \{-1,1\}^{s^0}$ be  a
        sign                vector                and                let
        $\vartheta^0=(\theta^0_1,\cdots,\theta_{s^0}^0)\in
        \Theta_T^{s^0}(\delta^0)$.      Suppose     that      Assumption
        \ref{hyp-estimation}       holds        for       the       sets
        $\cq^\star =  \{\theta^\star_1,\cdots,\theta_{s}^\star\} \subset
        \Theta_T$          of          cardinal         $s$          and
        $\cq^0 = \{ \theta^0_1,\cdots,\theta_{s^0}^0\} \subset \Theta_T$
        of cardinal  $s^0$.  Suppose also  that the noise  process $w_T$
        satisfies   Assumption  \ref{hyp:bruit}   for   a  noise   level
        $\scale>0$ and a decay rate for the noise variance $\Delta_T>0$.
	
	Then, the test statistic $\Tau_3$ is uniquely defined and there exist finite positive constants, 
	$a$ and $\mathcal{C}_i$ with $i=1,\cdots,5$,
	(depending on $r$ and on the function $\Funk$) such that for 
	any $\tau > 1$ and any tuning parameter $\kappa$:
	\begin{equation}
	\label{eq:choice_kappa_dico}
	\kappa \geq \mathcal{C}_1 \scale \sqrt{\Delta_T  \log (\tau)},
	\end{equation} 
	the test $\Psi_{\Tau_3}$ satisfies:
	\begin{equation}
	\label{eq:errorI+II_inclusion}
	R_\rho \left (\Psi_{\Tau_3}(\threshold) \right )  \leq 2 \, \mathcal{C}_2 \left (  \frac{|\Theta_T|}{ \sigma_T \,\tau \, \sqrt{\log (\tau)} }\vee
	\frac{1}{\tau}\right ) +   \frac{2}{\tau^{a \, s_0}},
	\end{equation}
	for any threshold $\threshold > 0$ and any separation $\rho > 0$ satisfying:
	\begin{equation}
	\threshold \geq  2\,\mathcal{C}_3\, s^0 \, \kappa  \quad \text{ and } \quad \rho \geq \mathcal{C}_4 \,s \, \kappa + \mathcal{C}_5 \, \threshold.
	\end{equation}  		
\end{theorem}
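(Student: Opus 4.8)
The plan is to use the pre-certificate $p_0$ as a linear functional that reads off the discrepancy $\mathcal{D}_{T,r}$. First I would note that $p_0$, defined by \eqref{eq:def_p0}--\eqref{matrix_system1}, is precisely the interpolating certificate attached to the set $\cq^0$ and the sign vector $v^0$, so Lemma~\ref{lem:interpolating_certificate} (with $s$ there replaced by $s^0$) yields its existence and uniqueness together with constants $C_N,C_F,C_B$ for which \ref{it:as1-<1}--\ref{it:norm<c} hold; in particular $\norm{p_0}_{L^2(\lambda_T)}\leq\sqrt{s^0}\,C_B$. Inserting $y=\beta^\star\Phi_T(\vartheta^\star)+w_T$ and adding and subtracting $\norm{\beta^\star}_{\ell_1}$ and $\langle\beta^\star\Phi_T(\vartheta^\star),p_0\rangle_{L^2(\lambda_T)}$, I would split
\[
\Tau_3=\underbrace{\big(\norm{\hat\beta}_{\ell_1}-\norm{\beta^\star}_{\ell_1}\big)}_{A}+\underbrace{\big(\norm{\beta^\star}_{\ell_1}-\langle\beta^\star\Phi_T(\vartheta^\star),p_0\rangle_{L^2(\lambda_T)}\big)}_{B}-\langle w_T,p_0\rangle_{L^2(\lambda_T)}.
\]

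\textbf{The deterministic core (main obstacle).} The crux, and the step I expect to be the most delicate, is the inequality
\[
c\,\mathcal{D}_{T,r}(\beta^\star,\vartheta^\star,v^0,\vartheta^0)\leq B,\qquad c:=\min(C_N,C_F),
\]
together with the identity $B=0$ under $H_0$. Writing $B=\sum_{\ell=1}^{s}|\beta^\star_\ell|\big(1-\operatorname{sign}(\beta^\star_\ell)\langle\phi_T(\theta^\star_\ell),p_0\rangle_{L^2(\lambda_T)}\big)$, each summand is nonnegative, and I would bound it below on the three regions defining $\mathcal{D}_{T,r}$. For $\ell\in S_k^\epsilon(r)$ the signs match ($\operatorname{sign}(\beta^\star_\ell)=\epsilon=v^0_k$) and \ref{it:as1-<1} gives $\operatorname{sign}(\beta^\star_\ell)\langle\phi_T(\theta^\star_\ell),p_0\rangle_{L^2(\lambda_T)}\leq 1-C_N\,\dT(\theta^\star_\ell,\theta^0_k)^2$, hence a contribution at least $C_N|\beta^\star_\ell|\,\dT(\theta^\star_\ell,\theta^0_k)^2$; for $\ell\in S(r)^c$ in the far region \ref{it:as1-<1-c} gives a contribution at least $C_F|\beta^\star_\ell|$; and for $\ell\in S(r)^c$ sitting in a ball $\mathcal{B}_T(\theta^0_k,r)$ of opposite sign the certificate retains the sign of $v^0_k=-\operatorname{sign}(\beta^\star_\ell)$ there, so $\operatorname{sign}(\beta^\star_\ell)\langle\phi_T(\theta^\star_\ell),p_0\rangle_{L^2(\lambda_T)}\leq 0\leq 1-C_F$ and the contribution is again at least $C_F|\beta^\star_\ell|$. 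Summing matches $c\,\mathcal{D}_{T,r}$, while under $H_0$ each $\theta^\star_\ell$ equals some $\theta^0_k$ with matching sign, forcing every summand to vanish. The point demanding care is the sign-retention claim in the last case, which must be extracted from the local shape of $p_0$ near each $\theta^0_k$ provided by Lemma~\ref{lem:interpolating_certificate}.

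\textbf{Probabilistic control.} Next I would work on a single high-probability event. The hypotheses of Theorem~\ref{maintheorem} being in force, for $\kappa\geq\mathcal{C}_1\scale\sqrt{\Delta_T\log\tau}$ its bound \eqref{eq:main_theorem_diff_l1} gives $|A|\leq\mathcal{C}_3\,\kappa\,s$ off an event of probability $\mathcal{C}_2\big(\tfrac{|\Theta_T|}{\sigma_T\tau\sqrt{\log\tau}}\vee\tfrac1\tau\big)$. Since $\langle w_T,p_0\rangle_{L^2(\lambda_T)}$ is centered Gaussian with variance at most $\scale^2\Delta_T\norm{p_0}_{L^2(\lambda_T)}^2\leq\scale^2\Delta_T\,s^0C_B^2$ by Assumption~\ref{hyp:bruit}, the tail bound \eqref{eq:gaussian-tail} yields, for $u=\mathcal{C}_3 s^0\kappa$ and the above $\kappa$,
\[
\mathbb{P}\big(|\langle w_T,p_0\rangle_{L^2(\lambda_T)}|>u\big)\leq 2\,\expp{-u^2/(2\scale^2\Delta_T s^0C_B^2)}\leq 2\,\tau^{-a\,s^0},\qquad a:=\frac{\mathcal{C}_3^2\mathcal{C}_1^2}{2C_B^2}.
\]

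\textbf{Conclusion.} Finally I would assemble the pieces. Under $H_0$ one has $B=0$ and $s\leq s^0$, so $|\Tau_3|\leq|A|+|\langle w_T,p_0\rangle_{L^2(\lambda_T)}|$; as $\threshold\geq2\mathcal{C}_3 s^0\kappa$ forces $\threshold-\mathcal{C}_3\kappa s^0\geq u$, the type~I error is at most $\mathcal{C}_2(\cdots)+2\tau^{-a s^0}$. Under $H_1(\rho)$ one has $B\geq c\,\mathcal{D}_{T,r}\geq c\rho$, so on the good event $\Tau_3\geq c\rho-\mathcal{C}_3\kappa s-|\langle w_T,p_0\rangle_{L^2(\lambda_T)}|$; choosing $\mathcal{C}_4=\mathcal{C}_3/c$ and $\mathcal{C}_5=3/(2c)$, the constraint $\rho\geq\mathcal{C}_4 s\kappa+\mathcal{C}_5\threshold$ combined with $\mathcal{C}_3 s^0\kappa\leq\threshold/2$ gives $c\rho-\mathcal{C}_3\kappa s-\threshold\geq u$, bounding the type~II error by $\mathcal{C}_2(\cdots)+2\tau^{-a s^0}$ as well. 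Adding the two and relabelling constants delivers \eqref{eq:errorI+II_inclusion}.
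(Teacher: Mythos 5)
Your proposal is correct and follows essentially the same route as the paper: the same decomposition of $\Tau_3$ into the $\ell_1$-estimation error, the deterministic term $B=\norm{\beta^\star}_{\ell_1}-\left\langle\beta^\star\Phi_T(\vartheta^\star),p_0\right\rangle_{L^2(\lambda_T)}$ bounded below by $(C_N\wedge C_F)\,\mathcal{D}_{T,r}$ via Lemma~\ref{lem:interpolating_certificate}, and the Gaussian term $\left\langle w_T,p_0\right\rangle_{L^2(\lambda_T)}$ controlled through $\norm{p_0}_{L^2(\lambda_T)}\leq\sqrt{s^0}\,C_B$ together with the tail bound, with Theorem~\ref{maintheorem} supplying the high-probability control of $\left|\norm{\hat\beta}_{\ell_1}-\norm{\beta^\star}_{\ell_1}\right|$. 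The only point where you are more explicit than the paper is the case of an index in $S(r)^c$ whose location lies inside some ball $\mathcal{B}_{T}(\theta^0_k,r)$ with mismatched sign, where you correctly flag that one needs the certificate to keep the sign of $v^0_k$ on that ball — a fact not literally contained in properties \ref{it:as1-<1}--\ref{it:as1-<1-c} as stated but inherited from the construction in the cited reference; the paper's own proof applies the same lower bound there without comment.
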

The proof is given in Section~\ref{sec:proof_thm_4_2}.

\subsection{Separation rates}

We give in this section an upper bound on the minimax separation $\rho^\star$ to test the goodness-of-fit of the dictionary, that is to distinguish the assumptions $H_0$ and $H_1(\rho)$ presented in Section~\ref{sec:goodness-of-fit-features}.
\begin{corollary}
	\label{theorem:inclusion_R}
	Under the framework and the assumptions of Theorem \ref{theorem:inclusion}, there exist finite positive constants $c$ and $C$ (depending on $r$ and the function $\Funk$) such that provided that $|\Theta_T|/\sigma_T \geq 1$, we have for any $\alpha \in (0,1)$:
	\begin{equation}
	\label{eq:inclusion_R}
	\rho^\star(\alpha) \leq C \,  \scale \, (s\vee s^0) \, \sqrt{ \Delta_T \log \left (\frac{c \, |\Theta_T|}{\alpha \, \sigma_T} \right )}.
	\end{equation}
\end{corollary}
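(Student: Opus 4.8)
The plan is to derive \eqref{eq:inclusion_R} directly from Theorem~\ref{theorem:inclusion}, in close analogy with the proof of Corollary~\ref{th:higly_sparse_R}. First I would fix the tuning parameter by taking equality in \eqref{eq:choice_kappa_dico}, namely $\kappa = \mathcal{C}_1\scale\sqrt{\Delta_T\log(\tau)}$ for a parameter $\tau>1$ still to be chosen, and set the threshold at its smallest admissible value $\threshold = 2\,\mathcal{C}_3\, s^0\,\kappa$. With these choices the separation constraint of Theorem~\ref{theorem:inclusion} reads $\rho \geq \mathcal{C}_4\, s\,\kappa + 2\mathcal{C}_3\mathcal{C}_5\, s^0\,\kappa$, which is implied by $\rho \geq (\mathcal{C}_4 + 2\mathcal{C}_3\mathcal{C}_5)\,(s\vee s^0)\,\kappa$ (note that $s,s^0\geq 1$ in this section, so no $\vee 1$ is needed). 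Theorem~\ref{theorem:inclusion} then yields
\[
R_\rho^\star \leq R_\rho\big(\Psi_{\Tau_3}(\threshold)\big) \leq 2\,\mathcal{C}_2\left(\frac{|\Theta_T|}{\sigma_T\,\tau\,\sqrt{\log(\tau)}}\vee\frac{1}{\tau}\right) + \frac{2}{\tau^{a s_0}}.
\]

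Next I would choose $\tau$ so that this bound is at most $\alpha$, making each of the two contributions at most $\alpha/2$. Using the standing hypothesis $|\Theta_T|/\sigma_T\geq 1$ and $\sqrt{\log(\tau)}\geq 1$ for $\tau\geq \mathrm{e}$, the first term is bounded by $2\mathcal{C}_2\,|\Theta_T|/(\sigma_T\tau)$, so taking $\tau = c\,|\Theta_T|/(\alpha\sigma_T)$ with $c\geq 4\mathcal{C}_2$ already forces it below $\alpha/2$. The genuinely new point compared with Corollary~\ref{th:higly_sparse_R} is the extra term $2\tau^{-a s_0}$; since $s_0\geq 1$ here, it is bounded by $2\tau^{-a}$, and enlarging $c$ (depending also on $a$) drives it below $\alpha/2$ as well. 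Summing the two contributions gives $R_\rho^\star\leq\alpha$, and substituting $\kappa$ back shows that $\rho \geq (\mathcal{C}_4 + 2\mathcal{C}_3\mathcal{C}_5)\,\mathcal{C}_1\,\scale\,(s\vee s^0)\,\sqrt{\Delta_T\log(c|\Theta_T|/(\alpha\sigma_T))}$ suffices. By the definition \eqref{def:minimax_separation} of $\rho^\star(\alpha)$ as the infimum of all separations with $R_\rho^\star\leq\alpha$, this establishes \eqref{eq:inclusion_R} with $C=(\mathcal{C}_4+2\mathcal{C}_3\mathcal{C}_5)\mathcal{C}_1$, up to a harmless constant coming from the choice of $c$.

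The step I expect to require the most care is the control of the term $2\tau^{-a s_0}$: unlike the situation in Corollary~\ref{th:higly_sparse_R}, the constant $a$ produced in Theorem~\ref{theorem:inclusion} (recall $a=(\mathcal{C}_1\mathcal{C}_3/C_B)^2/2$) need not exceed $1$, so forcing $\tau^{-a s_0}\leq\alpha/4$ may demand $\tau\gtrsim(1/\alpha)^{1/(a s_0)}$ rather than $\tau\gtrsim 1/\alpha$. The observation that keeps the rate clean is that this enlargement affects only $\log(\tau)$, which then grows like $\tfrac{1}{a s_0}\log(1/\alpha)\leq\tfrac{1}{a}\log(1/\alpha)$; since $s_0\geq 1$ and $|\Theta_T|/\sigma_T\geq 1$, this is at most a constant multiple (depending on $a$) of $\log(c|\Theta_T|/(\alpha\sigma_T))$, and that constant is absorbed into $C$ through a factor $1/\sqrt{a}$ outside the square root. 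Consequently the additional term alters none of the dependence on $s$, $s^0$, $\scale$, $\Delta_T$ or $\sigma_T$ in the final separation rate, and the linear (rather than square-root) dependence on $s\vee s^0$ is inherited directly from the linearity of the threshold and separation constraints in Theorem~\ref{theorem:inclusion}.
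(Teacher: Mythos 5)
Your proposal is correct and follows essentially the same route as the paper's proof: equality in \eqref{eq:choice_kappa_dico}, the minimal admissible threshold $\threshold = 2\mathcal{C}_3 s^0\kappa$, and a choice of $\tau$ polynomial in $|\Theta_T|/(\alpha\sigma_T)$ that makes each of the two error contributions at most $\alpha/2$. Your treatment of the $2\tau^{-a s_0}$ term (enlarging $\tau$ by a power $1/a$ when $a<1$ and absorbing the resulting factor in $\log\tau$ into $C$) is exactly the paper's device of taking $\tau = c'/(\sigma_T\alpha/(2|\Theta_T|))^{c''}$ with $c''=1\vee(1/a)$.
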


\begin{proof}
	The result is a direct consequence of Theorem \ref{theorem:inclusion}. 
	We fix the tuning parameter $\kappa = \mathcal{C}_1 \scale \sqrt{\Delta_T \log(\tau)}$ by taking the equality  in \eqref{eq:choice_kappa_dico}. 
	Then, for $\rho \geq \mathcal{C}_4 \, s \, \kappa +\mathcal{C}_5 \, \threshold$ and $\threshold = 2 \, \mathcal{C}_3 \, s^0 \kappa $ we have by Theorem \ref{theorem:inclusion} for $\tau > 1$ and since $s_0 \geq 1$:
	\[
	R_\rho^\star \leq R_\rho \left (\Psi_{\Tau_3}(\threshold) \right ) \leq 2\mathcal{C}_2 \left (  \frac{|\Theta_T|}{ \sigma_T \,\tau \, \sqrt{\log (\tau)} }\vee
	\frac{1}{\tau}\right ) +   \frac{2}{\tau^a},
	\]
	where the finite positive constants 
	$a$, $\mathcal{C}_i$ with $i \in \{1,\cdots,5\}$, from  Theorem \ref{theorem:inclusion} depend on $r$ and the function $\Funk$.
	
	Hence,     by      taking     $\tau     =      c'     /(\sigma_T
        \alpha/(2|\Theta_T|))^{c''}  $ with  $c''  = 1  \vee (1/a)$  and
        $c' =  (2\mathcal{C}_2) \vee \rm  e \vee 2^{1/\textit{a}}  $, we
        get                                                          for
        $\rho \geq  2\mathcal{C}_1((2\, \mathcal{C}_3  \, \mathcal{C}_5)
        \vee \mathcal{C}_4)\scale(s\vee  s^0) \sqrt{ \Delta_T \log  ( c'
          /(\sigma_T      \alpha/(2|\Theta_T|))^{c''}       )}$      and
        $\alpha           \in           (0,           1)$           that
        $R_\rho^\star   \leq  \frac{\alpha}{2}   +  \frac{\alpha}{2}   =
        \alpha$.    We   then    deduce   \eqref{eq:inclusion_R}    with
        $c= 2 c^{\prime (1/c'')}$.
\end{proof}

\section{Gaussian scaled-spikes deconvolution}
\label{ex:gaussian-spike}

In this section, we consider the discrete time process observed on a
regular grid  of $\R$ given in Section \ref{sec:discrete_noise}.  We recall that
Assumption \ref{hyp:bruit} holds with:
\[
\lambda_T =
\Delta_T\sum_{j=1}^T \delta_{t_j}
\quad\text{with}\quad
t_j=-a_T + j \Delta_T
\quad\text{and}\quad
\Delta_T=\frac{2a_T}{T},
\]
and
$w_T$ given by~\eqref{eq;def-wT-reg-grid}, where   $T \in \N^*$.
We consider the scaled Gaussian features  associated to the function:
\[
\kernel(t,\sigma) \mapsto  \frac{ \exp(-t^2/2\sigma^2) }{\pi^{1/4} \sigma ^{1/2} }
\quad\text{defined on}\quad
\Theta \times \mathfrak{S} = \R \times \R_+^*.
\]
We shall see below that the natural choice for the function $\Funk$
appearing in~\eqref{eq:def_K-app} is given by:
\[
\Funk = \kernel^0 \ast \kernel^0 = \pi^{1/4} \kernel^0(\cdot / \sqrt{2})
\quad\text{with}\quad  \kernel^0(\cdot)=\kernel(\cdot, 1). 
\]

In  the   following,  we  check  that   Assumption  \ref{hyp-estimation}
holds. Then, using Theorem \ref{maintheorem}  on a particular example, we
provide     a    prediction     bound    for     the    estimator     of
$(\beta^\star,  \vartheta^\star)$ solution  of the  optimization problem
\eqref{eq:generalized_lasso}.

\subsection{Choice of the approximating kernel}

We  denote  the   unscaled  feature $\varphi^0$  on   $\theta  \in  \Theta$
by:
\[
\varphi^0(\theta)  = \kernel(\theta -  \cdot, 1)= \kernel^0(\theta -\cdot).
\]
We  define the
mapping      $f_T     :      \Theta      \rightarrow     \Theta$      by
$f_T(\theta)= \theta  / \sigma_T$  for any $\theta  \in \Theta$  and the
(pushforward) measure  $\lambda_T^0 = \lambda_T \circ  f_T^{-1}$ so that
for any $g \in L^1(\lambda_T^0)$:
\[
\int g(\theta/\sigma_T) \lambda_T(\rd \theta) = \int g(\theta)  \,  \lambda^0_T(\rd \theta).
\]
The Hilbert space $L^2(\lambda_T^0)$ is  endowed with its natural scalar
product $\left  \langle \cdot, \cdot  \right \rangle_{L^2(\lambda_T^0)}$
and norm $\norm{\cdot}_{L^2(\lambda_T^0)}$.  We define on $\Theta^2$ the
kernel:
\begin{equation*}
\cK_T^0(\theta, \theta')= \langle \phi_{T}^0(\theta), \phi_{T}^0(\theta')
\rangle_{L^2(\lambda_T^0)}
\quad \text{  with  } \quad
\phi_{T}^0(\theta) = \varphi^0(\theta) / \norm{\varphi^0(\theta)}_{L^2(\lambda_T^0)}.
\end{equation*}
The kernel $\ck_T$ can be seen as a scaled kernel derived from $\ck_T^0$ as for $\theta, \theta' \in \Theta$:
\[
\cK_T(\theta, \theta') = \cK_T^0(\theta/\sigma_T, \theta'/\sigma_T).
\]
When the measure $\lambda_T^0$ converges in some sense, as $T$ goes to
infinity, towards the Lebesgue measure $\Leb$ on $\R$, it is natural to
consider the approximation $\ck_\infty^0$ of $\ck_T^0$ on $\Theta^2$ by:
\begin{equation*}
\ck_\infty^0(\theta,\theta') =\left \langle\phi_\infty^0(\theta) , \phi_\infty^0(\theta')
\right \rangle_{L^2(\Leb)}
\quad \text{with} \quad
\phi_\infty^0(\theta) = \varphi^0(\theta) / \norm{\varphi^0(\theta)}_{L^2(\Leb)}.
\end{equation*}
Thanks to the definition of $\Funk$, we also
have on $\Theta^2$ that:
\begin{equation*}
\Funk (\theta - \theta') = \ck_\infty^0(\theta,\theta').
\end{equation*}
The approximating kernel $\ck_T^{\text{prox}}$ is then  given
by \eqref{eq:def_K-app} on $\Theta^2$, that is, 
$\ck_T^{\text{prox}}(\cdot, \cdot)=
\ck_\infty^0(\cdot/\sigma_T,\cdot/\sigma_T) $. 

\subsection{Checking Assumption~\ref{hyp-estimation}}

\subsubsection{Regularity           of            the           dictionary}

We   refer  to   \cite[Section   8]{butucea22}  to   check  that
Assumption  \ref{hyp-estimation}~$\ref{hyp:theorem_properties_dico}$ 
holds for the feature $\varphi_T$ defined by~\eqref{eq:def-features} and
any scale parameter $ \sigma_T\in \mathfrak{S}=\R_+^*$.

\subsubsection{Boundedness and local concavity on the diagonal}
Elementary  calculations show  that  $g_\infty =  -  \Funk''(0) =  1/2$.
By definition of $\Funk$, we directly deduce that Assumption~\ref{hyp:properties_k} holds. 
We also get that for $r\in (0, \sqrt{2})$:
\[
\varepsilon (r) = 1 - \expp{-r^2/4} > 0
\quad\text{and} \quad
\nu(r) = \left ( 1 - \frac{r^2}{2} \right) 
\expp{-r^2/4}.
\]
We fix $r \in (0,1/2)$. We readily check that
Assumption~\ref{hyp-estimation}~$\ref{hyp:theorem_properties_k}$  is 
verified.

\subsubsection{Proximity to the approximating kernel}
In order for the  kernel $\ck_T^{\text{prox}}$ to be a good
approximation of $\ck_T$ in the sense of
Assumption~\ref{hyp-estimation}~$\ref{hyp:proximity-setting}$, we shall
consider  the set $\Theta_T$ over which the optimization is performed:
\begin{equation*}
\Theta_T = [-(1-\xi) a_T, (1-\xi)  a_T] \subset [-a_T,a_T], 
\end{equation*}
with a given shrinkage parameter $\xi \in (0,1)$. 
Intuitively,  one  does  not  expect  the  estimation  of  the  location
parameter  to perform  well  near  the lower  and  upper  bounds of  the
observation  grid (given  by  the support  of $\lambda_T$).   Following
\cite[Section~8]{butucea22}, we set:
\begin{equation}
\label{eq:shrink}
\gamma_T=2 \Delta_T \, \sigma_T^{-1}+ \sqrt{\pi}\,\expp{-\xi^2
	a_T^2/2\sigma_T^2}.
\end{equation}

Recall $\DT $ and $\CT$ defined by~\eqref{eq:def-rho} and
\eqref{def:V_1}. 
Using Lemma \cite[Lemma 8.1]{butucea22}, there exist finite positive
universal constants $c_0$, $c_1$ and $c_2$, such that $\gamma_T<c_0$
implies: 
\begin{equation}
\DT \leq c_1 \gamma_T
\quad \text{and} \quad
|1 - \CT|\leq  c_2 \gamma_T.
\end{equation}
Assume that $(a_T, T\geq 2)$ and $(\sigma_T, T \geq 2)$ are sequences of
positive numbers, such that:
\begin{equation}
\label{eq:lim-D-b_T}
\lim_{T\rightarrow \infty } a_T   = + \infty, \quad \lim_{T\rightarrow \infty } \sigma_T   = 0
\quad\text{and}\quad
\lim_{T\rightarrow \infty } \Delta_T \, \sigma_T^{-1} = 0. 
\end{equation}
Therefore, we have $\lim_{T \rightarrow +\infty} \DT = 0$ and $\lim_{T
	\rightarrow +\infty} \CT = 1$.
\medskip

Let    $\eta     \in    (0,1)$    be    fixed.      We    deduce    that
under~\eqref{eq:lim-D-b_T},                                   Assumption
\ref{hyp-estimation}~$\ref{hyp:proximity-setting}$ is  satified provided
that $T$  is larger  than some  constant depending  on $\eta$,  $r$, the
sparsity    $s$   and    the    sequences   $(a_T,    T\geq   2)$    and
$(\sigma_T, T \geq 2)$.

\subsubsection{Separation of the non-linear parameters}

We                              remark                              that
$\lim_{r''\rightarrow \infty } \sup_{|r'|\geq r''} |\Funk^{(i)}(r')|=0$ for
all    $i\in   \{0,\ldots,    3\}$.   Thus,    we   deduce    from   the
definition~\eqref{eq:def-delta-rs}  of  $\delta$ that  $\delta(u,s)$  is
finite   for  all   $s\in  \N^*$   and  $u>0$.    Let  us   stress  that
$\sup_{s\in  \N^*}  \delta(u,s)  \leq  M/u$ for  some  universal  finite
constant $M$, see  \cite[Remark~8.2]{butucea22}. Therefore, the quantity
$\Sep$ is bounded by a constant depending only on $\eta$ and $r$.
\medskip

So Assumption \ref{hyp-estimation}~$\ref{hyp:separation}$ is verified as
soon as  $|\theta - \theta' |> \sigma_T \, \Sep$ for all for all $\theta
\neq \theta' \in \cq^\star$. (Notice this happens for the scaling
parameter $\sigma_T$ small  enough
depending on $\cq^\star$.)

\subsection{Prediction error bound in a  particular case}
Recall the shrinkage parameter $\xi\in (0, 1)$ in~\eqref{eq:shrink}. Let us assume that:
\[
a_T = \log(T)
\quad\text{and}\quad
\sigma_T = 1/ \sqrt{\xi \log(T)}.
\]
In particular, condition~\eqref{eq:lim-D-b_T} holds.  In this case, there
exists a finite positive constant $c$ depending on $r$, $\eta$ and $\xi$
such   that   for  $T   \geq   c   \log(T)^{3/2}  \,   s$,   Assumption
\ref{hyp-estimation}  holds   (notice  that  the   separation  condition
\eqref{eq:separation-Q-star} of  the location parameters  in $\cq^\star$
is  also verified  for $T$  large enough,  depending on  $\cq^\star$, as
$\lim_{T\rightarrow  + \infty } \sigma_T=0$).  By Theorem \ref{maintheorem}
with   $\tau   =   T$   and   $\kappa$  given   by   the   equality   in
\eqref{eq:bound-kappa}, we get that:
\[
\frac{1}{\sqrt{T}}\norm{\hat{\beta}\Phi_{T}(\hat{\vartheta}) -
	\beta^{\star}\Phi_{T}(\vartheta^{\star}) }_{\ell_2} \leq
\mathcal{C}_0 \,\mathcal{C}_1 \, \scale \, \sqrt{\frac{s\log (T)}{T}}, 
\]
with                probability               larger                than
$1   -  \mathcal{C}_2   \left  (   \frac{2  \sqrt{\xi}   \log(T)}{T}\vee
\frac{1}{T}\right   )$,    where   the    constants   $\mathcal{C}_0$,
$\mathcal{C}_1$ and $\mathcal{C}_2$ do not depend on $T$.

\section{Low-pass filter}
\label{sec:low-pass-filter}
In
this  section,  we consider  the  continuous-time  process described  in
Section~\ref{sec:continuous_noise} on the torus $\Theta = \R / \Z$ with 
$\lambda_T$   the Haar measure on $\Theta$, which is identified with the Lebesgue measure $\Leb$, and the noise:
\[
w_T=\sum_{k\in \N} \sqrt{\xi_k} \, G_k\,  \psi_k,
\]
where  $(G_k,  k\in  \N)$   are  independent  centered  Gaussian  random
variables with variance $\scale^2$, $\psi=(\psi_k, k \in \N)$ is an o.n.b. of
$L^2(\Leb)$ on
$\Theta$ and  $\xi=(\xi_k, k\in  \N)$ is a summable  sequence of
non-negative real numbers. The sequences $\psi$ and $\xi $ may depend on
$T$.   Recall from Section~\ref{sec:continuous_noise}
that the
noise satisfies  Assumption \ref{hyp:bruit}  for a positive  noise level
$\scale$     and      a     decay     on     the      noise     variance
$\Delta_T= \sup_{k \in \N} \xi_k $. 
\medskip

We consider the normalized Dirichlet kernel, see~\eqref{eq:low-pass-h},
on $\Theta$: 
\begin{equation}
\label{eq:low-pass-h-2}
\kernel(t,\sigma)  = \frac{\sin(T \pi  t)}{\sqrt{T} \, \sin(\pi t)}
\quad\text{for}\quad
t\in \Theta=\R/\Z \! \! \quad\text{and} \! \!\quad
\sigma = \frac{1}{T}, \!\! \quad T \in 2 \N^* +1.
\end{equation}
The parameter $T$ is related to the so-called cut-off frequency $f_c \in
\N^*$ by  $T=2f_c+1$.
We shall see below that the natural choice for the function $\Funk$
appearing in~\eqref{eq:def_K-app} is given by:
\begin{equation}
   \label{eq:def-funk-LPF}
\Funk(t)  =  \frac{\sin(\pi t)}{\pi t}
\quad\text{for}\quad t\in \R.
\end{equation}
We get from the definition
\eqref{eq:def-g}  that $g_\infty = - \Funk''(0)=\pi^2 / 3$.

\begin{remark}
  { Note that, if we consider the Shannon scaling
    function from multi-resolution approximation in
    Section~\ref{sec:features-spaces} with $\sigma_T=1/T$, then its
    kernel $\cK_T$ (see~\eqref{eq:def-KT}) 
    is exactly equal to $\ck_T^{\text{prox}}$ (see~\eqref{eq:def_K-app})
    with $\Funk$ from~\eqref{eq:def-funk-LPF}. 
Therefore there is no  approximation in this case. This example can be
treated similarly to the low-pass filter.}
  \end{remark}

In  the   following,  we  check  that   Assumption  \ref{hyp-estimation}
hold. Then, using Theorem \ref{maintheorem}, we
provide     a    prediction     bound    for     the    estimator     of
$(\beta^\star,  \vartheta^\star)$ solution  of the  optimization problem
\eqref{eq:generalized_lasso}.

\subsection{The approximating kernel}
We define
the    features    $\varphi_T$   using   \eqref{eq:def-features}    with
$\sigma_T = 1/T$.
Elementary calculations give that for $\theta, \theta'\in \Theta$:
\[
\ck_T (\theta, \theta') = \frac{\sin(T \pi  (\theta-\theta'))}{T \,
	\sin(\pi (\theta-\theta'))}
\cdot
\]

Recall that by convention $|\theta -\theta'|$ is the Euclidean distance
between $\theta$ and $ \theta'$ in $\Theta$, and in particular it
belongs to $ [0, 1/2]$.
We   define  the  approximating  kernel   $\ck_T^{\text{prox}}$ on
$\Theta$ by:
\[
\ck_T^{\text{prox}}(\theta, \theta')=\Funk(T|\theta-\theta'|)
\quad\text{with}\quad |\theta-\theta'|\in [0, 1/2].
\]
Since $\Funk$ is even, we get also that
$\Funk(T|\theta-\theta'|)=\Funk(T(\theta-\theta'))$ where, for
$\theta,  \theta'\in \Theta$,  their representers 
in $\R$ are chosen so that $\theta - \theta'$ belongs to $[-1/2, 1/2]$.

\subsection{Checking Assumption~\ref{hyp-estimation}}

\subsubsection{Regularity           of            the           dictionary}

It is elementary  to check  that 	$g_{T}$ is a constant function
on $\Theta$ equal to $ 
(T^2 - 1) \, g_\infty $ and that 
Assumption~\ref{hyp-estimation}~$\ref{hyp:theorem_properties_dico}$ on
the regularity of the dictionary 
holds. 
\subsubsection{Boundedness and local concavity on the diagonal}
There exists $R > 0$ such that for any $r \in  (0,R)$:
\[
\varepsilon (r) = 1 - \frac{\sin(\pi r)}{\pi r}> 0
\quad\text{and} \quad
\nu(r) = - \left (\frac{6}{\pi^3 r^3} - \frac{3}{\pi \,r}\right ) \sin( \pi r) + \frac{6\cos(\pi r)}{\pi^2 r^2} > 0.
\]
We fix $r \in (0,(1/ \sqrt{2 g_\infty L_2}) \wedge (R/2))$. This and the
fact that $F$ is  $\cc^\infty $ with bounded derivatives implies
that Assumption \ref{hyp-estimation}~$\ref{hyp:theorem_properties_k}$ on
the  boundedness and  the local  concavity of  the approximating  kernel
holds.
\subsubsection{Proximity to the approximating kernel}
We set  $\Theta_T = \Theta$. The proof of the next lemma on the  uniform
approximation of $\ck_T$ by $\ck_T^{\text{prox}}$ on the torus is
postponed to Section~\ref{sec:proof-app-Dir}.
\begin{lem}
	\label{lem:low-pass-prox}
	There exists a universal positive
	finite constant $c_3$ such that for any  $T \in  2\N^*+ 1$:
	\begin{equation}
	\label{eq:low-pass-rates}
	\DT \leq \frac{c_3}{T} \quad \text{and} \quad |1- \CT| \leq
	\frac{1}{2(T^2-1)}\cdot
	\end{equation}
\end{lem}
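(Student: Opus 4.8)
The plan is to establish the two inequalities separately, the bound on $|1-\CT|$ being elementary and the bound on $\DT$ being the substantive one. Since $\sigma_T=1/T$, both $g_T=g_\infty(T^2-1)$ and $g_{\ck_T^{\text{prox}}}=g_\infty/\sigma_T^2=g_\infty T^2$ are constant on $\Theta$, so the two ratios in \eqref{eq:def-rho} are constant and $\CT=\sqrt{g_{\ck_T^{\text{prox}}}/g_T}=T/\sqrt{T^2-1}\geq 1$. A one-line computation then gives $\CT-1=\big(\sqrt{T^2-1}\,(T+\sqrt{T^2-1})\big)^{-1}$, and since $\sqrt{T^2-1}\leq T$ this is at most $1/(2(T^2-1))$, which is the second inequality.

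For $\DT$ I would first exploit the constancy of $g_T$ and $g_{\ck_T^{\text{prox}}}$ to simplify the covariant derivatives: by \eqref{eq:tDi}--\eqref{def:derivatives_kernel}, constancy of $g_\cK$ makes $\tilde D_{i;\cK}$ act as $g_\cK^{-i/2}\partial^i$. Writing $\ck_T(\theta,\theta')=\phi(\theta-\theta')$ with $\phi(u)=\sin(\pi T u)/(T\sin(\pi u))$ (a trigonometric polynomial, hence $\cc^\infty$), this yields for $k=i+j$
\[
\ck_T^{[i,j]}(\theta,\theta')=g_T^{-k/2}(-1)^j\phi^{(k)}(\theta-\theta'),\qquad \ck_T^{\text{prox}[i,j]}(\theta,\theta')=g_\infty^{-k/2}(-1)^j\Funk^{(k)}(T(\theta-\theta')),
\]
the second expression agreeing with \eqref{eq:cov-derivative-F}. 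Thus both $\DT^{(1)}$ (with $k\leq 4$, over all $u$) and $\DT^{(2)}$ (with $k=6$, only at $u=0$) reduce to comparing $g_T^{-k/2}\phi^{(k)}(u)$ with $g_\infty^{-k/2}\Funk^{(k)}(Tu)$.

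The key observation is the exact identity $\phi(u)-P(u)=T^{-1}\sin(\pi T u)\,\psi(u)$, where $P(u):=\Funk(Tu)$ and $\psi(u):=1/\sin(\pi u)-1/(\pi u)$; the function $\psi$ has a removable singularity at $0$ and no other on $[-1/2,1/2]$, so it extends to a $\cc^\infty$ function there with all derivatives uniformly bounded. Setting $a=g_T$ and $b=g_\infty T^2=g_{\ck_T^{\text{prox}}}$, and using that $g_\infty^{-k/2}\Funk^{(k)}(Tu)=b^{-k/2}P^{(k)}(u)$, I would split
\[
a^{-k/2}\phi^{(k)}(u)-g_\infty^{-k/2}\Funk^{(k)}(Tu)=a^{-k/2}\big(\phi^{(k)}(u)-P^{(k)}(u)\big)+(a^{-k/2}-b^{-k/2})\,P^{(k)}(u),
\]
and control each term. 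Differentiating the identity $k$ times with the Leibniz rule, each derivative hitting $\sin(\pi Tu)$ produces a factor $\pi T$, so the boundedness of the derivatives of $\psi$ gives $|\phi^{(k)}(u)-P^{(k)}(u)|\leq C_kT^{k-1}$ uniformly on $[-1/2,1/2]$; since $a^{-k/2}\leq CT^{-k}$ (using $T^2-1\geq\tfrac{8}{9}T^2$ for $T\geq3$), the first term is $O(1/T)$. For the second term a routine estimate gives $|a^{-k/2}-b^{-k/2}|=a^{-k/2}\big(1-(1-T^{-2})^{k/2}\big)=O(T^{-k-2})$, while $|P^{(k)}(u)|\leq T^k\norm{\Funk^{(k)}}_\infty$, so this term is $O(1/T^2)$. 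Hence $\DT^{(1)}\leq c_1/T$.

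For $\DT^{(2)}$ the same splitting applies with $k=6$ at $u=0$, where there is a further gain: the top-order Leibniz term (all six derivatives on $\sin(\pi Tu)$) vanishes because $\sin^{(6)}(0)=0$, improving the bound to $|\phi^{(6)}(0)-P^{(6)}(0)|\leq CT^4$, so that the first term becomes $O(1/T^2)$; the second term is $O(1/T^2)$ as before, giving $\DT^{(2)}\leq c_2/T^2$. Taking the maximum yields $\DT\leq c_3/T$, as claimed. The only real difficulty is technical bookkeeping: verifying the smoothness and uniform boundedness of $\psi$ and its derivatives on the torus, and carefully tracking the powers of $T$ through the Leibniz expansion so that the near-cancellation between the two normalizations $g_T$ and $g_\infty T^2$ is correctly captured.
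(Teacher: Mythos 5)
Your proof is correct and follows essentially the same route as the paper's: the explicit computation $\CT=T/\sqrt{T^2-1}$ for the second bound, and for $\DT$ the same key identity $\phi(u)-\Funk(Tu)=T^{-1}\sin(\pi Tu)\,\psi(u)$ with $\psi$ smooth on $[-1/2,1/2]$, a Leibniz expansion giving $O(T^{k-1})$ for the $k$-th derivative, and a separate $O(T^{-2})$ control of the mismatch between the normalizations $g_T=g_\infty(T^2-1)$ and $g_\infty T^2$. The only differences are cosmetic (you split the difference into two explicit terms where the paper groups them under one $\partial_t^{\ell}$, and your extra $O(T^{-2})$ refinement for $\DT^{(2)}$ is not needed).
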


Let    $\eta     \in    (0,1)$    be    fixed.      We    deduce
from~\eqref{eq:low-pass-rates}
that              Assumption
\ref{hyp-estimation}~$\ref{hyp:proximity-setting}$ is  satified provided
that $T$  is larger  than some  constant depending  on $\eta$,  $r$, and
the
sparsity    $s$.

\subsubsection{Separation of the non-linear parameters}
Notice                                                              that
$\lim_{r''\rightarrow \infty }  \sup_{|r'|\geq r''} |\Funk^{(i)}(r')|=0$
for   all  $i\in   \{0,  \cdots,   3\}$.  Thus,   we  deduce   from  the
definition~\eqref{eq:def-delta-rs}  of  $\delta$ that  $\delta(u,s)$  is
finite for all $s\in \N^*$ and $u>0$.  \medskip

So Assumption \ref{hyp-estimation}~$\ref{hyp:separation}$ is verified as
soon as  $|\theta - \theta' |> \sigma_T \, \Sep$ for all  $\theta
\neq \theta' \in \cq^\star$. (Notice this happens for $T$ large enough
depending on $\cq^\star$ as $\sigma_T=1/T$.)

\subsection{Prediction error bound}
There exists a constant $c$ depending on $\eta$ and $r$ such that for
any $T\in 2\N^*+1$ such that $T \geq c \,s$, and provided
that~\eqref{eq:separation-Q-star} is satisfied,  Assumption
\ref{hyp-estimation} holds. Using Theorem \ref{maintheorem} with
$\kappa$ given by an equality in~\eqref{eq:bound-kappa} with $\tau > 1$,
we obtain  the prediction bound:
\begin{equation*}
\norm{\hat{\beta}\Phi_{T}(\hat{\vartheta}) -
	\beta^{\star}\Phi_{T}(\vartheta^{\star}) }_{L^2(\Leb)} \leq  \mathcal{C}_0 \,\mathcal{C}_1 \, \scale \, \sqrt{s\, \Delta_T\, \log (\tau)},
\end{equation*}
with probability larger than $1  -
\mathcal{C}_2 \left (  \frac{T}{ \tau \sqrt{\log (\tau)} }\vee
\frac{1}{\tau}\right )$, where the constants $\mathcal{C}_0$,
$\mathcal{C}_1$ and $\mathcal{C}_2$ do not depend on $T$.

\begin{remark}
	Exact     support    recovery     results     were    obtained     in
	\cite{duval2015exact}.   The authors  considered  a small  noise
	regime,  that  is:
        \begin{equation}
          \label{eq:petit-bruit}
          \norm{w_T}_{L^2(\Leb)} \leq  C \kappa,
        \end{equation}
        for some  finite constant  $C$. They  assumed that  the location
        parameters  satisfy for  any distinct $k  ,\ell \in  \{1,\cdots,s\}$,    the     separation    condition
        $|\theta^\star_k-\theta_\ell^\star|   \geq   C    /f_c   $   for
        $T=2f_c+1$, for some positive constant $C$ and with $f_c \geq s$
        ($s$ being the  number of active features in  the mixture). They
        showed that  there exist  finite constants  $C'$ and  $C''$ such
        that for all $k \in \{1,\cdots,s\}$:
	\[
	|\tilde \theta_k - \theta_k^\star| \leq 
	C'\norm{w_T}_{L^2(\Leb)} \quad \text{and} \quad
	|\tilde \beta_k-\beta^\star_k| \leq  C'' \norm{w_T}_{L^2(\Leb)} ,
	\]
	for        some        estimators       $(\tilde        \beta, \tilde
	\vartheta=(\tilde  \theta_1,\cdots,\tilde  \theta_s))$  obtained  by  solving  the
	BLasso  problem.
	
	However the small noise regime assumption is restrictive as it does not
	encompass the example of Section~\ref{sec:continuous_noise} where  for
	all $k \in \N$, $\xi_k = T^{-1}\ind_{ \{1 \leq k \leq T \}}$ and thus 
	$\Delta_T=1/T$ and $\mathbb{E}[\norm{w_T}_{L^2(\Leb)}] $ is of
        order $1$.  {So taking  $\kappa$ given
          by~\eqref{eq:bound-kappa} with an equality and $\tau=T$, we
          deduce that~\eqref{eq:petit-bruit} does not hold for $T$
          large.}
        Recall that in \eqref{eq:main_theorem_diff_l1} we obtain that
        our estimators satisfy: 
	\[
	\left |\|\,\hat{\beta}\|_{\ell_1} - \|\,\beta^\star \|_{\ell_1}
	\right | \leq C \frac {\sparse \, \sqrt{\log(T)}}{ \sqrt{T}}
	\]
	for some constant $C>0$ with high probability. Thus our prediction and estimation rates are smaller by a factor $\sqrt{\log(T)}/\sqrt{T}$ due to the probabilistic bounds on linear functionals of the noise process that we used in the proof, and this holds under an analogous separation condition on any $\theta_k^\star$ and $\theta_\ell^\star$, for $k \neq \ell$ in $\{1,...,\sparse\}$.
\end{remark}

\section{Technical proofs}
\label{sec:proofs}

\subsection{Proof of Lemma~\ref{lem:identifiability}}
\label{sec:proof_Lemma_2_4}

 First, for $s\geq 1$ and $\vartheta^\star = (\theta_1^\star, \cdots,\theta_s^\star)$ such that Assumption  \ref{hyp-estimation} stands for the set $\cq^\star$,  we show that the application $\beta \mapsto \beta \Phi_T(\vartheta^\star)$ defined from $\R^s$ to $L^2(\lambda_T)$ is injective.
	
	We have that $\norm{\beta\Phi_T(\vartheta^\star)}_{L^2(\lambda_T)} = \beta\Gamma \beta^\top $, where $\Gamma \in \R^{s \times s}$ is the symmetric matrix defined by $\Gamma_{k,\ell} = \cK_T(\theta_{k}^\star,\theta_{\ell}^\star)$.
	Let $\lambda_{\min}$ be the smallest eigenvalue of $\Gamma$. Using Gershgorin's theorem and the definition of $\DT$ given by \eqref{def:V_1}, we have that:
	\begin{multline*}
	\lambda_{\min} \geq 1  - \max_{1\leq  \ell \leq
		s} \sum\limits_{k=1, k\neq
		\ell}^{s} 
	|\cK_T(\theta_{\ell}^\star,\theta_{k}^\star)| \\\geq 1 - \max_{1\leq  \ell \leq
		s} \sum\limits_{k=1, k\neq
		\ell}^{s}
	\left |\Funk\left (\frac{|\theta_{\ell}^\star- \theta_{k}^\star|}{\sigma_T}\right )\right |  - (s-1)\DT.
	\end{multline*}
	The separation condition from Point $\ref{hyp:separation}$ of Assumption \ref{hyp-estimation} implies that for all $k,\ell \in \{1,\cdots,s\}$ such that $k \neq \ell$ we have $|\theta_k^\star - \theta_\ell^\star| \geq \sigma_T \Sep \geq 8 \, \sigma_T  \, \delta(\eta H_{\infty}^{(2)}(r),s)$.
	Recall the definition of $\delta(u,s)$ given by \eqref{eq:def-delta-rs}. We deduce that:
	\[
	\max_{1\leq  \ell \leq
		s} \sum\limits_{k=1, k\neq
		\ell}^{s}
	\left |\Funk\left (\frac{|\theta_{\ell}^\star- \theta_{k}^\star|}{\sigma_T}\right )  \right |  \leq \eta H_{\infty}^{(2)}(r).
	\]
	By Point $\ref{hyp:proximity-setting}$ of Assumption \ref{hyp-estimation}, we have $(s-1)\DT \leq (1-\eta) H_{\infty}^{(2)}(r) $ and $H_{\infty}^{(2)}(r) \leq 1/6$. Thus, we get:
	\begin{equation}
	\label{eq:small-eig}
	\lambda_{\min} \geq 5/6.
	\end{equation}
	Hence, the symmetric matrix $\Gamma$ is positive-definite.  This proves that the application $\beta \mapsto \beta \Phi_T(\vartheta^\star)$ is injective  from $\R^s$ to $L^2(\lambda_T)$. By symmetry, we obtain for $s^0 \geq 1$  that the application $\beta \mapsto \beta \Phi_T(\vartheta^0)$ is injective  from $\R^{s^0}$ to $L^2(\lambda_T)$.
	\medskip
	
	If $s=0$, we have $\beta^\star \Phi_T(\vartheta^\star) = 0$. For $s^0 \geq 1$, we have $\beta^0 \in (\R^*)^{s^0}$ and since $\beta \mapsto \beta \Phi_T(\vartheta^0)$ is injective, we deduce that $\beta^0 \Phi_T(\vartheta^0) \neq 0$. Thus, $s=0$ and $\beta^\star \Phi_T(\vartheta^\star) = \beta^0 \Phi_T(\vartheta^0)$ implies that $s^0=0$. By symmetry, $s^0= 0$ and $\beta^\star \Phi_T(\vartheta^\star) = \beta^0 \Phi_T(\vartheta^0)$ implies also that $s=0$.

	Assume from now on that $s,s^0 \in \N^*$ and that $\beta^\star \Phi_T(\vartheta^\star) = \beta^0 \Phi_T(\vartheta^0)$. Let us consider the application $v:\cq^\star \mapsto \{-1,1\}$ defined by: $v(\theta_k^\star) = \sgn (\beta_k^\star)$ for any $k \in \{1,\cdots,s\}$. According to Lemma \ref{lem:interpolating_certificate}, there exists $p^\star \in L^2(\lambda_T)$ such that:
	\[
	\norm{\beta^\star}_{\ell_1} = \sum_{k=1}^{s} \beta^\star_k\left \langle \phi_T(\theta^\star_k), p^\star\right \rangle_{L^2(\lambda_T)} = \left \langle \beta^\star\Phi_T(\vartheta^\star), p^\star\right \rangle_{L^2(\lambda_T)}.
	\]
	Using the fact that $\beta^\star \Phi_T(\vartheta^\star) = \beta^0 \Phi_T(\vartheta^0)$ and Properties $\ref{it:as1-<1}$ and $\ref{it:as1-<1-c}$ of $p^\star$ in Lemma \ref{lem:interpolating_certificate}, we get:
	\begin{equation}
	\label{eq:norm*<=norm0}
	\norm{\beta^\star}_{\ell_1}  = \sum_{k=1}^{s^0} \beta_k^0\left \langle \phi_T(\theta^0_k), p^\star\right \rangle_{L^2(\lambda_T)} \leq \norm{\beta^0}_{\ell_1}.
	\end{equation}
	The role of $(\beta^\star,\vartheta^\star)$ and $(\beta^0,\vartheta^0)$ being symmetric, we also get $\norm{\beta^0}_{\ell_1} \leq \norm{\beta^\star}_{\ell_1}$. Hence, we have $\norm{\beta^0}_{\ell_1} = \norm{\beta^\star}_{\ell_1}$ and  $\sgn (\beta^0_k) = \left \langle \phi_T(\theta^0_k), p^\star\right \rangle_{L^2(\lambda_T)}$ for $k \in \{1,\cdots,s^0\}$.
	Using Properties $\ref{it:as1-<1}$ and $\ref{it:as1-<1-c}$ of $p^\star$ in Lemma \ref{lem:interpolating_certificate}, we remark that for any $\theta \notin \cq^\star$
	\[
	\left|\left \langle \phi_T(\theta), p^\star\right \rangle_{L^2(\lambda_T)}\right| < 1.
	\]
	Thus, we deduce from \eqref{eq:norm*<=norm0} that $\cq^0 \subseteq \cq^\star$ and by symmetry $\cq^0 = \cq^\star$. Hence, we obtain $\vartheta^\star= \vartheta^0$ (up to a permutation on the components of $\vartheta^\star$) and $s=s^0$. Then use the injectivity of the function $\beta \mapsto \beta\Phi_T(\vartheta^\star)$ to get that $\beta^\star = \beta^0$ (up to the same permutation). This finishes the proof of the Lemma.
 
\subsection{Proof of Theorem~\ref{th:non_sparse}}	
\label{sec:proof_thm_3_1}

 We give a bound of the type I error probability.	Using that under $H_0$ we have $y =  \beta^0\Phi_T(\vartheta^0) + w_T$, we get:
	\begin{equation*}
	\mathbb{E}_{(\beta^0,\, \vartheta^0)}[\Psi_{\Tau_1}(\threshold) ] =  \mathbb{P} \left ( \left  | \norm{w_T}_{L^2(\lambda_T)}^2 - \mathbb{E} \left  [ \norm{w_T}_{L^2(\lambda_T)}^2 \right ] \right | >\threshold  \right ).
	\end{equation*}
	Using Chebyshev's inequality,
	we obtain:
	\begin{equation}
	\label{eq:typeI-nonsparse}
	\mathbb{E}_{(\beta^0,\, \vartheta^0)}[\Psi_{\Tau_1}(\threshold) ]  \leq \frac{\varmodel}{\threshold^2}\cdot
	\end{equation}
	
	\medskip
	
	We now give a bound of the type II error probability. We set:
	\[
	R =\norm{\beta^0\Phi_T(\vartheta^0) - \beta^\star\Phi_T(\vartheta^\star)}_{L^2(\lambda_T)},
	\]	
	where $(\beta^\star, \vartheta^\star) \in (\R^*)^s \times \Theta_T^{s}(\delta^\star)$. Using the decomposition of $y$ from the model \eqref{eq:model} and the triangle inequality, we have: 
	
	\begin{multline*}
	|\Tau_1| \geq   R^2 -  \left |\norm{w_T}_{L^2(\lambda_T)}^2 - \mathbb{E}[\norm{w_T}_{L^2(\lambda_T)}^2] \right |\\
	 - 2 \left | \left \langle \beta^0\Phi_T(\vartheta^0) - \beta^\star\Phi_T(\vartheta^\star),w_T \right \rangle_{L^2(\lambda_T)} \right |.
	\end{multline*}
	Notice that by Assumption \ref{hyp:bruit}, the random variable $$\left \langle \beta^0\Phi_T(\vartheta^0) - \beta^\star\Phi_T(\vartheta^\star),w_T \right \rangle_{L^2(\lambda_T)},$$ is Gaussian with zero mean and variance bounded by $\scale^2 \,\Delta_T \, R^2$.
	Hence, using that under $H_1(\rho)$ we have $R \geq \rho$, we obtain:
	\begin{equation}
	\label{eq:typeII__decomp}
	\begin{aligned}
	\mathbb{E}_{(\beta^\star, \, \vartheta^\star)}[1-\Psi_{\Tau_1}(\threshold) ] \leq &\mathbb{P} \left ((\rho^2 - \threshold)/2  \leq  \left |\norm{w_T}_{L^2(\lambda_T)}^2 - \mathbb{E}[\norm{w_T}_{L^2(\lambda_T)}^2] \right | \right ) \\
	&+ \mathbb{P} \left ((R^2 - \threshold)/2  \leq  2\scale \sqrt{\Delta_T} \, R \,  | G | \right ),
	\end{aligned}
	\end{equation}
	where $G$ is a standard Gaussian random variable.
	On the one hand, for $\threshold < \rho^2$, using Chebyshev's inequality we get:
	\begin{equation}
	\label{eq:nonsparse_II_term_a}
	\mathbb{P} \left ((\rho^2 - \threshold)/2  \leq  \left |\norm{w_T}_{L^2(\lambda_T)}^2 - \mathbb{E}[\norm{w_T}_{L^2(\lambda_T)}^2] \right |\right ) \leq \frac{4\, \varmodel}{(\rho^2 - \threshold)^2} \cdot
	\end{equation}
	On the other hand, we have:
	\begin{equation}
	\mathbb{P} \left((R^2 - \threshold)/2   \leq 2\scale \sqrt{\Delta_T} \, R \,  | G | \right ) \! \leq \! \mathbb{P} \left ( \frac{ \rho^2 - \threshold}{4\scale \sqrt{\Delta_T}\rho}\leq  | G |\right )  
	\! \leq  \expp{ - ( \rho^2 - \threshold)^2/(32 \scale^2 \Delta_T \rho^2)}.\label{eq:nonsparse_II_term_b}
	\end{equation}
	where we used that $\rho \leq R$ and the tail bound  (see \cite[Formula~7.1.13]{abramowitz1964handbook}):
	\begin{equation}
	\label{eq:gaussian-tail}
	\frac{1}{\sqrt{2\pi}}\int_u^{+\infty}\expp{-t^2/2} \rd t\leq
	\, \frac{1}{2}\, \expp{-u^2/2}, \quad \text{  for } u>0.
	\end{equation}
	By combining \eqref{eq:typeII__decomp} with \eqref{eq:nonsparse_II_term_a} and \eqref{eq:nonsparse_II_term_b}, we get the following bound on the type II error probability:
	\begin{equation}
	\label{eq:typeII-nonsparse}
	\mathbb{E}_{(\beta^\star, \, \vartheta^\star)}[1-\Psi_{\Tau_1}(\threshold) ] \leq  \frac{4 \, \varmodel}{(\rho^2 - \threshold)^2} +  \expp{ - ( \rho^2 - \threshold)^2/(32\scale^2 \Delta_T \rho^2)}.
	\end{equation}
	Then, by putting together \eqref{eq:typeI-nonsparse} and \eqref{eq:typeII-nonsparse}, we obtain \eqref{eq:non_sparse}.

\subsection{Proof of Theorem \ref{th:higly_sparse}}
\label{sec:proof_thm_3_3}
	\textbf{Case $s > 0$.} Let  $(\beta^\star, \vartheta^\star) \in (\R^*)^s \times \Theta_T^{s}(\delta^\star)$. We  consider the estimators $(\hat \beta, \hat \vartheta)$ defined in \eqref{eq:generalized_lasso}.
	Notice that the hypotheses of Theorem \ref{maintheorem} are in force. We use the constants $\mathcal{C}_0$, $\mathcal{C}_1$, $\mathcal{C}_2$ defined therein. Under $H_0$, we have $s=s^0$. Thus, for  $\sqrt{\threshold} \geq \mathcal{C}_0 \, \sqrt{\sparse} \, \kappa$, we get the following bound on the type I error probability:
	\begin{equation}
	\label{eq:errorI_s>0}
	\begin{aligned}
	\mathbb{E}_{(\beta^0, \, \vartheta^0) }[\Psi_{\Tau_2}(\threshold) ] & \leq \mathbb{P}\left (  \norm{\hat{\beta}\Phi_T(\hat{\vartheta}) - \beta^\star\Phi_T(\vartheta^\star)}_{L^2(\lambda_T)} >\mathcal{C}_0 \, \sqrt{\sparse} \, \kappa \right ) \\&\leq \mathcal{C}_2 \left (  \frac{|\Theta_T|}{ \sigma_T  \, \tau \sqrt{\log (\tau)} }\vee
	\frac{1}{\tau}\right ),
	\end{aligned}
	\end{equation}
	where we used that $\beta^0 \Phi_T(\vartheta^0) = \beta^\star \Phi_T(\vartheta^\star)$ and that  $\sqrt{\threshold} \geq \mathcal{C}_0 \, \sqrt{\sparse} \, \kappa$ for the first inequality and Theorem \ref{maintheorem} for the second.
	
	\medskip
	
	We now bound the type II error probability. Under $H_1(\rho)$, since  $$\norm{\beta^\star \Phi_T(\vartheta^\star) - \beta^0 \Phi_T(\vartheta^0)}_{L^2(\lambda_T)} \geq \rho,$$  
	we obtain  that:
	\begin{equation}
	\label{eq:errorII_s>0}
	\begin{aligned}
		\mathbb{E}_{(\beta^\star, \, \vartheta^\star)}[1-\Psi_{\Tau_2}(\threshold) ] &\leq \mathbb{P}\left(\rho - \sqrt{\threshold} \leq \norm{\hat \beta \Phi_T(\hat \vartheta) - \beta^\star \Phi_T(\vartheta^\star)}_{L^2(\lambda_T)} \right) \\&\leq  \mathcal{C}_2 \left (  \frac{|\Theta_T|}{ \sigma_T  \, \tau \sqrt{\log (\tau)} }\vee
	\frac{1}{\tau}\right ),
	\end{aligned}
	\end{equation}
	where we used the triangle inequality for the first inequality and  Theorem \ref{maintheorem} as well as $\rho - \sqrt{\threshold} \geq \mathcal{C}_0 \, \sqrt{s} \, \kappa$ for the second. 
	
	\medskip
	
	\textbf{Case $s= 0$.} Since $s=0$, we have $y=w_T$ according to \eqref{eq:model}. Let us first bound the type I error probability $
	\mathbb{E}_{(\beta^0, \, \vartheta^0) }[\Psi_{\Tau_2}(\threshold) ] $. Assume that the hypothesis $H_0$ holds so that $s=s^0=0$. By definition we have:
	\begin{equation*}
	\mathbb{E}_{(\beta^0, \, \vartheta^0) }[\Psi_{\Tau_2}(\threshold) ]  =  \mathbb{P}\left ( \norm{\hat \beta \Phi_T(\hat \vartheta)}_{L^2(\lambda_T)}^2 > \threshold \right ).
	\end{equation*}
	We get from the definition of the estimators $\hat \beta$ and $\hat \vartheta$ from \eqref{eq:generalized_lasso} that:
	\begin{equation*}
	\frac{1}{2}\norm{w_T - \hat \beta \Phi_T(\hat \vartheta)}_{L^2(\lambda_T)}^2 + \kappa \norm{\hat \beta}_{\ell_1} \leq \frac{1}{2}\norm{w_T}_{L^2(\lambda_T)}^2.
	\end{equation*}
	By rearranging some terms in the equation above, we get:
	\begin{equation}
	\label{eq:decomp_T_s=0}
	\begin{aligned}
	\frac{1}{2}\norm{\hat \beta \Phi_T(\hat \vartheta)}_{L^2(\lambda_T)}^2  &\leq \left \langle \hat \beta \Phi_T(\hat \vartheta),w_T \right \rangle_{L^2(\lambda_T)} - \kappa \norm{\hat \beta}_{\ell_1} \\
	&\leq \norm{\hat \beta}_{\ell_1} \left(  \sup_{\Theta_T}| \left \langle \phi_T(\theta),w_T \right \rangle_{L^2(\lambda_T)}| - \kappa\right ).
	\end{aligned}
	\end{equation}
	Let us define the event:
	\begin{equation}
	\mathcal{A} = \{ \sup_{\theta \in \Theta_T}| \left \langle \phi_T(\theta),w_T \right \rangle_{L^2(\lambda_T)}| < \kappa\}.
	\end{equation}
	We deduce from \eqref{eq:decomp_T_s=0} that on the event $\mathcal{A}$ we have $\norm{\hat \beta \Phi_T(\hat \vartheta)}_{L^2(\lambda_T)} = 0$. Therefore we get:
	\begin{equation}
	\label{eq:errorI_s=0}
	\mathbb{E}_{(\beta^0, \, \vartheta^0) }[\Psi_{\Tau_2}(\threshold) ] \leq \mathbb{P}\left ( \norm{\hat \beta \Phi_T(\hat \vartheta)}_{L^2(\lambda_T)} > 0 \right) \leq \mathbb{P}(\mathcal{A}^c).
	\end{equation}
	We shall bound later $\mathbb{P}(\mathcal{A}^c)$, see \eqref{eq:bound_event_A_complementary}.
	
	\medskip
	
	We now consider  the type II error probability. We asume $H_1$, that is $$\norm{ \beta^0 \Phi_T(\vartheta^0)}_{L^2(\lambda_T)} \geq \rho.$$ We obtain:
	\begin{equation}
	\label{eq:errorII_s=0}
	\begin{aligned}
	\mathbb{E}_{(\beta^\star, \, \vartheta^\star)}[1-\Psi_{\Tau_2}(\threshold) ] &= \mathbb{P}\left  ( \norm{\hat \beta \Phi_T(\hat \vartheta) - \beta^0 \Phi_T(\vartheta^0)}_{L^2(\lambda_T)} \leq \sqrt{\threshold} \right ) \\&\leq \mathbb{P}\left  ( \rho - \sqrt{\threshold} \leq \norm{\hat \beta \Phi_T(\hat \vartheta))}_{L^2(\lambda_T)} \right ) \leq \mathbb{P}(\mathcal{A}^c).
	\end{aligned}
	\end{equation}
	where we used the definition of $\Tau_2$ and the triangle inequality for the first inequality, the second inequality of \eqref{eq:errorI_s=0}  as well as $\rho - \sqrt{\threshold} > 0$ for the second.
	
	We shall apply \cite[Lemma A.1] {butucea22} to bound $\mathbb{P}(\mathcal{A}^c)$. It amounts to controling the supremum of the Gaussian process $\theta \mapsto \left \langle \phi_T(\theta),w_T \right \rangle_{L^2(\lambda_T)}$. Recall that Assumptions~~\ref{hyp:reg-f}
	and~\ref{hyp:g>0} 
	hold. The function $\phi_T$ is of class $\cc^1$ from
	the interval $\Theta_T$ to $L^2(\lambda_T)$, with $\Theta_T$ a sub-interval of $\Theta$.  We have also, with $\phi_T^{[1]} = \tilde{D}_{1;\ck_T}[\phi_T]$, that:
	\begin{equation*}
	\norm{\phi_T(\theta)}_{L^2(\lambda_T)} = 1
	\quad \text{and} \quad
	\norm{\phi_T^{[1]}(\theta) }_{L^2(\lambda_T)}^2 = \cK_{T}^{[1,1]}(\theta,\theta) =  1.  
	\end{equation*}
	Since Assumption \ref{hyp:bruit} on the noise $w_T$ holds, the hypotheses of  \cite[Lemma A.1]{butucea22} hold and we deduce from \cite[Lemma A.1]{butucea22} (with $C_1=C_2 =1$ therein) that:
	\begin{equation*}
	\begin{aligned}
	\mathbb{P}(\mathcal{A}^c) &= \P \left ( \sup_{\theta\in \Theta_T}  |\left \langle \phi_T(\theta),w_T \right \rangle_{L^2(\lambda_T)}| \geq \kappa
	\right ) \\
	&\leq 3 \cdot \left ( \frac{2 \scale \sqrt{g_\infty}|\Theta_T|\sqrt{ \Delta_T}}{\sigma_T\kappa
	}\vee 1 \right )\,  \expp{-\kappa^2/(4 
		\scale^2 \Delta_T)},
	\end{aligned}
	\end{equation*}
	where  the diameter  $|\Theta_T|_{\mathfrak{d}_T}$ of the set $\Theta_T$ with respect to the metric $\mathfrak{d}_T$ is bounded by $2 \sqrt{g_\infty}|\Theta_T|/\sigma_T$ using \eqref{eq:equi-dT-dI} and the fact that $\CT \leq 2$.
	By taking $\kappa \geq 2 \scale \sqrt{\Delta_T \log ( \tau)}$, we get:
	\begin{equation}
	\label{eq:bound_event_A_complementary}
	\mathbb{P}(\mathcal{A}^c) = \P \left ( \sup_{\theta\in \Theta_T}  |\left \langle \phi_T(\theta),w_T \right \rangle_{L^2(\lambda_T)}| \geq \kappa
	\right ) \leq 3 \cdot \left ( \frac{\sqrt{g_\infty}|\Theta_T|}{\sigma_T\tau \sqrt{ \log (\tau)}
	}\vee \frac{1}{\tau} \right ).
	\end{equation}
	Notice that  the constant $\mathcal{C}_2$ from Theorem \ref{maintheorem} is equal to $2\sqrt{g_\infty} \, \mathcal{C}_2'$ where $\mathcal{C}_2'$ is given by \cite[$\mathcal{C}_2$  from Eq. (84) therein]{butucea22} and is greater than $3$.  The constant $\mathcal{C}_2$ depends only on $r$ and the function $\Funk$.
	Finally, by putting together \eqref{eq:errorI_s>0}, \eqref{eq:errorII_s>0}, \eqref{eq:errorI_s=0}  and \eqref{eq:errorII_s=0}, we obtain for $\kappa \geq \mathcal{C}_1\scale \sqrt{\Delta_T  \log (\tau)}$ (where the constant $\mathcal{C}_1$ is defined in  \cite[Proof of Theorem 2.1 (p.32)]{butucea22} and is superior to $4$) the bound on the maximal testing risk from Theorem \ref{th:higly_sparse}.
	This finishes the proof.

\subsection{Proof of Theorem~\ref{theorem:inclusion}}
\label{sec:proof_thm_4_2}

This proof is based on the certificate function. 
Following \cite{butucea22}, we give  the existence and
properties of the interpolating
certificate function.

\begin{lem}[Interpolating certificate]
	\label{lem:interpolating_certificate}
	Let $T \in \N$, $s \in \N^*$, $\eta \in (0,1)$, $r \in \left (0,1/\sqrt{2 \, g_\infty \, L_{2}} \right )$ and $\cq = \{\theta_1,\cdots,\theta_s\} \subset \Theta_T$. Suppose  that   Assumption \ref{hyp-estimation} holds.
	
	Then, there exist finite positive constants $C_{N}  , C_{F} $,
	$C_{B}$  with $C_F < 1$, depending on $r$
	and the function $\Funk$, such that  for any application $v : \cq \mapsto \{-1,1\}$, there exist  unique $\alpha, \coeff \in \R^s$ such that $p \in L^2(\lambda_T)$ uniquely defined by:
	\begin{equation}
	\begin{cases}
	&p = \sum\limits_{k=1}^s \alpha_{k}
	\phi_{T}(\theta_{k})
	+ \sum\limits_{k=1}^{s} \coeff_{k} \, 
	\tilde D_{1,T}[\phi_{T}](\theta_{k}),\\
	&\langle \phi_{T}(\theta),p  \rangle_{L^2(\lambda_T)} =
	v(\theta) \quad \text{ and } \quad \left \langle \partial_\theta
	\phi_T(\theta),p \right \rangle_{L^2(\lambda_T)} = 0, \quad
	\text{for all} \quad \theta\in \cq,
	\end{cases}	
	\end{equation} 
	satisfies:
	\begin{propenum} 
		\item\label{it:as1-<1}
		For all $\theta \in \cq$ and $\theta'\in
		\mathcal{B}_{T}(\theta,r)$, we have:
		$$ | \langle \phi_{T}(\theta'),p  \rangle_{L^2(\lambda_T)}| \leq 1 -
		C_{N}\, \dT(\theta,\theta')^{2}.$$
		\item\label{it:as1-<1-c}  For all $\theta$ in $\Theta_T$, $\theta  \notin \bigcup\limits_{\theta'
			\in \cq} \mathcal{B}_{T}(\theta',r)$ (far region), we
		have: $$|\langle\phi_{T}(\theta),p \rangle_{L^2(\lambda_T)}| \leq 1 - C_{F}.$$
		\item \label{it:norm<c} We have $\norm{p}_{L^2(\lambda_T)} \leq \sqrt{s} \, C_{B}$.
	\end{propenum}
\end{lem}
\begin{proof}
	Using similar  arguments as those  developed in the proof  of Theorem
	\ref{maintheorem},    we   get    that   all    the   hypotheses    of
	\cite[Proposition,  7.4]{butucea22} are  satisfied. The  existence and
	uniqueness    of   $p$    is   then    guaranteed   by    \cite[Lemma,
	10.1]{butucea22}.  The   properties  satisfied   by  $p$   are  direct
	consequences of \cite[Proposition, 7.4]{butucea22}.
\end{proof}

	Recall the test problem given by \eqref{eq:H0-dico}. 	
	Assumption \ref{hyp-estimation} holds for the set $\cq^0$. Thanks to Lemma \ref{lem:interpolating_certificate}, the element $p_0$ of $L^2(\lambda_T)$ is uniquely defined by $v^0$, \eqref{eq:def_p0} and \eqref{matrix_system1}. Hence, the test statistic $\Tau_3$ from \eqref{def:test_statistic_dico}  is well-defined.
	
	\medskip
	
	We first bound the type I error probability. Let us fix  $(\beta^\star, \vartheta^\star) \in (\R^*)^s\times \Theta_T^{s}(\delta^\star)$ such that $H_0$ holds. Using that $y = \beta^\star\Phi_{T}(\vartheta^\star) + w_T$ and the triangle inequality, we obtain:
	\begin{align}
	\label{eq:lower-bound-T3}
	|\Tau_3| &= \left |\norm{\hat{\beta}}_{\ell_1} - \norm{\beta^\star}_{\ell_1} +  \norm{\beta^\star}_{\ell_1}-  \left \langle \beta^\star \Phi_{T}(\vartheta^\star),p_0 \right \rangle_{L^2(\lambda_T)}  -  \left \langle w_T,p_0 \right \rangle_{L^2(\lambda_T)}\right |\\ \nonumber
	&\leq \left |\norm{\hat{\beta}}_{\ell_1} - \norm{\beta^\star}_{\ell_1} \right |  +  |B| +  \left | \left \langle w_T,p_0 \right \rangle_{L^2(\lambda_T)}\right |, \nonumber
	\end{align}
	where:
	\begin{equation}
	\label{eq:def-B}
	B= \norm{\beta^\star}_{\ell_1} -  \left \langle \beta^\star
	\Phi_{T}(\vartheta^\star),p_0 \right \rangle_{L^2(\lambda_T)}.
	\end{equation}
	Since   $\cq^{\star,+} \subseteq \cq^{0, +}, \, \cq^{\star,-} \subseteq \cq^{0, -}$, we have   for all $k \in \{1,\cdots,s\}$:
	\[
	| \beta^\star_k| -  \left \langle \beta_k^\star
	\phi_{T}(\theta^\star_k),p_0 \right \rangle_{L^2(\lambda_T)}  
	= 0,
	\]
	we deduce that $B=0$ under $H_0$. 
	Hence, we have that:
	\begin{equation}
	\label{eq:decomp_errorI}
	\mathbb{E}_{(\beta^\star, \, \vartheta^\star)}[\Psi_{\Tau_3}(\threshold) ] \leq   \mathbb{P} \left ( \left |\norm{\hat{\beta}}_{\ell_1} - \norm{\beta^\star}_{\ell_1}\right |    > \threshold/2 \right ) + \mathbb{P}\left ( \left | \left \langle w_T,p_0 \right \rangle_{L^2(\lambda_T)}\right | > \threshold/2 \right ) . 
	\end{equation}
	Recall that under $H_0$, we have $s \leq s^0$. Therefore, since $\mathcal{C}_3 \, \kappa \, s^0 \leq \threshold /2$, we have $\mathcal{C}_3 \, \kappa \, s \leq \threshold /2$. We get from Theorem \ref{maintheorem} that:
	\begin{equation}
	\label{eq:majo-theorem}
	\mathbb{P} \left (\left |\norm{\hat{\beta}}_{\ell_1} - \norm{\beta^\star}_{\ell_1}\right | >  \threshold/2 \right ) \leq \mathcal{C}_2 \left (  \frac{|\Theta_T|}{\sigma_T \tau \sqrt{\log (\tau)} }\vee
	\frac{1}{\tau}\right ) .
	\end{equation}
	Then, thanks to Assumptions~\ref{hyp:bruit} and Lemma
	\ref{lem:interpolating_certificate}, the quantity  $\left
	\langle w_T,p_0 \right \rangle_{L^2(\lambda_T)}$ is a centered
	Gaussian random variable of variance bounded by $\scale^2
	C_B^2\Delta_T  s_0 $ where $C_B$ is the finite positive constant
	from Lemma \ref{lem:interpolating_certificate}. Hence we have,
	provided that $\threshold \geq 2\mathcal{C}_3 \, \kappa \, s^0 $
	with $\kappa \geq \mathcal{C}_1 \scale \sqrt{\Delta_T  \log
		(\tau)} $, that is, $t^2\geq (2 \mathcal{C}_1\mathcal{C}_3
	\scale s_0)^2\Delta_T  \log
	(\tau)$:
	\begin{equation*}
	\begin{aligned}
	\P\left ( \left \langle w_T,p_0 \right \rangle_{L^2(\lambda_T)} > \threshold / 2\right ) &\leq 
	\int_{\threshold/2}^{+\infty}\frac{\expp{-x^2/(2
			\scale^2 \Delta_T C_B^2 s_0 )}}{\sqrt{2\pi
			\scale^2\Delta_T C_B^2 s_0}}  \, \rd x
	\\
	&\leq   \frac{1}{2} \expp{-\frac{\threshold^2}{8 (\scale^2\Delta_T C_B^2 s_0)}} \leq  \frac{1}{2\tau^{a s_0}},
	\end{aligned}
	\end{equation*}
	with                   $a                  =                   (
	\mathcal{C}_1\mathcal{C}_3/C_B)^2/2$ and where  we used the tail
	bound \eqref{eq:gaussian-tail}.  It gives by symmetry that:
	\begin{equation}
	\label{eq:majo-gaussian}
	\P\left (| \left \langle w_T,p_0 \right \rangle_{L^2(\lambda_T)}| > \threshold / 2\right ) \leq  \frac{1}{\tau^{a \, s_0}}.
	\end{equation}
	Plugging \eqref{eq:majo-theorem} and \eqref{eq:majo-gaussian} in \eqref{eq:decomp_errorI}, we get:
	\begin{equation}
	\label{eq:errorI_inclusion}
	\sup_{(\beta^\star, \, \vartheta^\star) \in H_0} \mathbb{E}_{(\beta^\star, \, \vartheta^\star) } \left [ \Psi_{\Tau_3}(\threshold) \right] \leq \mathcal{C}_2 \left (  \frac{|\Theta_T|}{ \sigma_T \,\tau \, \sqrt{\log (\tau)} }\vee
	\frac{1}{\tau}\right ) +   \frac{1}{\tau^{a \, s_0}}\cdot
	\end{equation}
	
	\medskip
	
	We now bound the type II error probability. Assume that $H_1$ holds, that is $\mathcal{D}_{T,r}(\beta^\star,\vartheta^\star,v^0, \vartheta^0) \geq\rho$.  We have, using the first equality of \eqref{eq:lower-bound-T3} and the triangle inequality, that:
	\begin{equation*}
	\begin{aligned}
	|\Tau_3| &\geq    \left | B \right |  -  \left | \left \langle w_T,p_0 \right \rangle_{L^2(\lambda_T)}\right | - \left |\norm{\hat{\beta}}_{\ell_1} - \norm{\beta^\star}_{\ell_1}\right |,
	\end{aligned}		
	\end{equation*}
	with $B$ defined in \eqref{eq:def-B}.
	Using  the definitions \eqref{def:sets_S0} of  $S(r)$ and
	$S_{k}^\epsilon(r)$ with $\epsilon\in \{+,-\}$ and $k \in \mathcal{I}^\epsilon$, we get:
	\begin{multline*}
		B =   \sum_{\substack{\epsilon\in \{+,-\}\\ k \in
			\mathcal{I}^\epsilon, \,  \ell \in S_k^\epsilon(r)}}  |\beta^\star_{\ell}| \left ( 1 -
	\sgn (\beta^\star_\ell) \, \left \langle \phi_{T}(\theta^\star_{\ell}) ,p_{0} \right
	\rangle_{L^2(\lambda_T)} \right ) \\
	+ \sum\limits_{k \in S(r)^{c}} \! \!
	|\beta^\star_{k}| \left ( 1 -    \sgn (\beta^\star_k) \, \left \langle
	\phi_{T}(\theta^\star_{k}) ,p_{0} \right \rangle_{L^2(\lambda_T)}  \right).
	\end{multline*}
	Thanks to Lemma~\ref{lem:interpolating_certificate}~$\ref{it:as1-<1}$-$\ref{it:as1-<1-c}$ of ,
	we obtain:
	\begin{equation*}
	\begin{aligned}
	B  &\geq \sum_{\substack{\epsilon\in \{+,-\}\\ k \in
			\mathcal{I}^\epsilon, \,  \ell \in S_k^\epsilon(r)}}  C_{N} | \beta_{\ell}^\star| \dT
	(\theta_{\ell}^\star, \theta_{k}^{0})^{2} + \sum\limits_{k \in
		S(r)^{c}} C_{F}|\beta_{k}^\star|\\
	& \geq (C_N \wedge C_F) \mathcal{D}_{T,r}(\beta^\star,\vartheta^\star,v^0, \vartheta^0)  \geq (C_N  \wedge  C_F) \rho,
	\end{aligned}
	\end{equation*}
	where the constants $C_N$ and $C_F$ are defined in Lemma \ref{lem:interpolating_certificate} and depend on $r$ and on the function $\Funk$.
	Therefore, we have with $a_\threshold=(C_N \wedge C_F) \rho -\threshold$:
	\begin{multline*}
	\mathbb{E}_{(\beta^\star,\vartheta^\star)} \left [ 1 -
	\Psi_{\Tau_3}(\threshold)\right]
	\leq 	\mathbb{P}\left ( \left | \left \langle w_T,p_0 \right
	\rangle_{L^2(\lambda_T)}\right | + \left
	|\norm{\beta^\star}_{\ell_1} - \norm{\hat{\beta}}_{\ell_1}\right |
	\geq a_\threshold \right )\\
	 \leq \mathbb{P}\left (   \left | \left \langle w_T,p_0 \right
	\rangle_{L^2(\lambda_T)}\right |  \geq
	a_\threshold/2 \right) \\+	\mathbb{P}\left ( \left
	|\norm{\beta^\star}_{\ell_1} - \norm{\hat{\beta}}_{\ell_1}
	\right | \geq  a_\threshold/2\right ). 
	\end{multline*}
	Provided that $\rho \geq \mathcal{C}_4 \, s \, \kappa + \mathcal{C}_5 \, \threshold$ with $\mathcal{C}_4= 2 \, \mathcal{C}_3 /(C_N \wedge  C_F)$  and $\mathcal{C}_5=2/(C_N \wedge  C_F)$ we have $a_\threshold/2 \geq (\mathcal{C}_3 \kappa s) \vee (\threshold /2) $. By using \eqref{eq:majo-theorem} and \eqref{eq:majo-gaussian}, we obtain:
	\begin{equation}
	\label{eq:errorII_inclusion}
	\sup_{(\beta^\star, \, \vartheta^\star) \in H_1(\rho)} \mathbb{E}_{(\beta^\star, \, \vartheta^\star) } \left [ 1 - \Psi_{\Tau_3}(\threshold) \right] \leq  \mathcal{C}_2 \left (  \frac{|\Theta_T|}{ \sigma_T \,\tau \, \sqrt{\log (\tau)} }\vee
	\frac{1}{\tau}\right ) +   \frac{1}{\tau^{a s_0}}.
	\end{equation}
	Finally, by adding both sides of  \eqref{eq:errorI_inclusion} and \eqref{eq:errorII_inclusion}, we get \eqref{eq:errorI+II_inclusion}.
	This concludes the proof.


\subsection{Proof of Lemma~\ref{lem:low-pass-prox}}
\label{sec:proof-app-Dir}

It    is   easy    to   check    that   the    functions   $g_T$    and
$g_{\ck_T^\text{prox}}$ are constant functions with:
\begin{equation}
g_{T} = g_\infty \, (T^2 - 1) \quad \text{and} \quad
g_{\ck_T^\text{prox}} = g_\infty \, T^2. 
\end{equation}
Thus,     we    easily     deduce    the     second    inequality     of
\eqref{eq:low-pass-rates}  from  the  definition  \eqref{eq:def-rho}  of
$\CT$.

\medskip

We now consider the bound on $\DT$. 
For  $i,j \in \{0,\cdots,3\}$ and $\ell=i+j$, we have with $\alpha_T=1- 1/T^2$:
\begin{equation}
\label{eq:bound-KT-K-inf}
\sup_{\Theta^2} |\cK_T^{[i,j]} - \cK_T^{{\text{prox}}[i,j]}|
=
g_\infty^{-\ell/2} (T^2 \alpha_T)^{-\ell/2} A_{\ell,T} ,
\end{equation}
where
\begin{equation*}
A_{\ell,T} = \sup_{t\in
		[-\frac{1}{2},\frac{1}{2}]} \left |  \partial_t^{\ell}  \left [
	D_T(t) + \left (1- \alpha_T^{\ell/2}\right) \frac{\sin(T \pi t)}{T \pi t}\right ] \right |,
\end{equation*}
and, for $t\in [-1/2, 1/2]$ and the convention $J(0)=0$:
\[
D_T(t) = \frac{\sin(T \pi t)}{T} J(t)
\quad\text{and}\quad
J(t) = \frac{1}{\sin(\pi t)} - \frac{1}{\pi t}\cdot
\]
It  is   easy  to  check  that the function  $J$  can be expanded as a power series at
0 with positive  convergence radius, and thus is of class $\cc^\infty $
on $ [-1/2, 1/2]$. Thus the following
constant is finite:
\[
M = \sup_{0 \leq \ell \leq 6}\,\,  \sup_{ [-1/2, 1/2]} \,
|J^{(\ell)}|<+\infty .
\]
Using the Leibniz rule, we have that for $\ell \in \{1,\cdots,6\}$ and
$t\in [-1/2, 1/2]$:
\begin{equation*}
|\partial_t^{\ell} D_T(t)| = \frac{1}{T} \left |\sum_{j=0}^\ell \binom{\ell}{j}
(T\pi)^j \, \sin^{(j)}(T \pi t) \, J^{(\ell-j)}(t) \right | \leq M
\frac{(T\pi +1)^\ell}{T}\cdot
\end{equation*}
We deduce from~\eqref{eq:bound-KT-K-inf} that for  $i,j \in
\{0,\cdots,3\}$ and $\ell=i+j$:
\begin{equation*}
\begin{aligned}
\sup_{\Theta^2} |\cK_T^{[i,j]} - \cK_T^{{\text{prox}}[i,j]}|
&\leq  g_\infty^{-\ell/2} (T^2 \alpha_T)^{-\ell/2} \left(M
\frac{(T\pi +1)^\ell}{T} + (1- \alpha_T^{\ell/2})\right) \\
&\leq  M3^\ell
\, T^{-1}, 
\end{aligned}
\end{equation*}
where we used that $T\geq 3$ and $g_\infty \alpha_T\geq 1$, and that $1
-\alpha_T^{\ell/2}=0$ for $\ell=0$. 
Recall the definition \eqref{def:V_1} of $\DT$ to get $\DT\leq   M3^\ell
\, T^{-1}$. 
This finishes the proof.



\bibliographystyle{imsart-number} 
\bibliography{ref}      

\begin{thebibliography}{21}

\bibitem{abramowitz1964handbook}
\begin{bbook}[author]
\beditor{\bsnm{Abramowitz},~\bfnm{Milton}\binits{M.}} \AND
  \beditor{\bsnm{Stegun},~\bfnm{Irene~A.}\binits{I.~A.}}, eds.
(\byear{1992}).
\btitle{Handbook of mathematical functions with formulas, graphs, and
  mathematical tables}.
\bpublisher{Dover Publications, Inc., New York}
\bnote{Reprint of the 1972 edition}.
\bmrnumber{1225604}
\end{bbook}
\endbibitem

\bibitem{Arias-Castro11}
\begin{barticle}[author]
\bauthor{\bsnm{Arias-Castro},~\bfnm{Ery}\binits{E.}},
  \bauthor{\bsnm{Cand\`es},~\bfnm{Emmanuel~J.}\binits{E.~J.}} \AND
  \bauthor{\bsnm{Plan},~\bfnm{Yaniv}\binits{Y.}}
(\byear{2011}).
\btitle{Global testing under sparse alternatives: {ANOVA}, multiple comparisons
  and the higher criticism}.
\bjournal{Ann. Statist.}
\bvolume{39}
\bpages{2533--2556}.
\bdoi{10.1214/11-AOS910}
\bmrnumber{2906877}
\end{barticle}
\endbibitem

\bibitem{Baraud02}
\begin{barticle}[author]
\bauthor{\bsnm{Baraud},~\bfnm{Yannick}\binits{Y.}}
(\byear{2002}).
\btitle{Non-asymptotic minimax rates of testing in signal detection}.
\bjournal{Bernoulli}
\bvolume{8}
\bpages{577--606}.
\bmrnumber{1935648}
\end{barticle}
\endbibitem

\bibitem{boyer2017adapting}
\begin{barticle}[author]
\bauthor{\bsnm{Boyer},~\bfnm{Claire}\binits{C.}},
  \bauthor{\bsnm{De~Castro},~\bfnm{Yohann}\binits{Y.}} \AND
  \bauthor{\bsnm{Salmon},~\bfnm{Joseph}\binits{J.}}
(\byear{2017}).
\btitle{Adapting to unknown noise level in sparse deconvolution}.
\bjournal{Inf. Inference}
\bvolume{6}
\bpages{310--348}.
\bdoi{10.1093/imaiai/iaw024}
\bmrnumber{3764527}
\end{barticle}
\endbibitem

\bibitem{butucea2021}
\begin{binproceedings}[author]
\bauthor{\bsnm{Butucea},~\bfnm{Cristina}\binits{C.}},
  \bauthor{\bsnm{Delmas},~\bfnm{Jean-François}\binits{J.-F.}},
  \bauthor{\bsnm{Dutfoy},~\bfnm{Anne}\binits{A.}} \AND
  \bauthor{\bsnm{Hardy},~\bfnm{Cl\'{e}ment}\binits{C.}}
(\byear{2021}).
\btitle{Modeling infra-red spectra: an algorithm for an automatic and
  simultaneous analysis}.
In \bbooktitle{In Proceedings of the 31st European Safety and Reliability
  Conference}
\bpages{3359--3366}.
\end{binproceedings}
\endbibitem

\bibitem{butucea22}
\begin{barticle}[author]
\bauthor{\bsnm{Butucea},~\bfnm{C.}\binits{C.}},
  \bauthor{\bsnm{Delmas},~\bfnm{J-F.}\binits{J.-F.}},
  \bauthor{\bsnm{Dutfoy},~\bfnm{A.}\binits{A.}} \AND
  \bauthor{\bsnm{Hardy},~\bfnm{C.}\binits{C.}}
(\byear{2022}).
\btitle{Off-the-grid learning of sparse mixtures from a continuous dictionary}.
\bjournal{arXiv preprint arXiv:2207.00171}.
\end{barticle}
\endbibitem

\bibitem{candes2013super}
\begin{barticle}[author]
\bauthor{\bsnm{Cand\`es},~\bfnm{Emmanuel~J.}\binits{E.~J.}} \AND
  \bauthor{\bsnm{Fernandez-Granda},~\bfnm{Carlos}\binits{C.}}
(\byear{2013}).
\btitle{Super-resolution from noisy data}.
\bjournal{J. Fourier Anal. Appl.}
\bvolume{19}
\bpages{1229--1254}.
\bdoi{10.1007/s00041-013-9292-3}
\bmrnumber{3132912}
\end{barticle}
\endbibitem

\bibitem{candes2014towards}
\begin{barticle}[author]
\bauthor{\bsnm{Cand\`es},~\bfnm{Emmanuel~J.}\binits{E.~J.}} \AND
  \bauthor{\bsnm{Fernandez-Granda},~\bfnm{Carlos}\binits{C.}}
(\byear{2014}).
\btitle{Towards a mathematical theory of super-resolution}.
\bjournal{Comm. Pure Appl. Math.}
\bvolume{67}
\bpages{906--956}.
\bdoi{10.1002/cpa.21455}
\bmrnumber{3193963}
\end{barticle}
\endbibitem

\bibitem{candes2011probabilistic}
\begin{barticle}[author]
\bauthor{\bsnm{Cand\`es},~\bfnm{Emmanuel~J.}\binits{E.~J.}} \AND
  \bauthor{\bsnm{Plan},~\bfnm{Yaniv}\binits{Y.}}
(\byear{2011}).
\btitle{A probabilistic and {RIP}less theory of compressed sensing}.
\bjournal{IEEE Trans. Inform. Theory}
\bvolume{57}
\bpages{7235--7254}.
\bdoi{10.1109/TIT.2011.2161794}
\bmrnumber{2883653}
\end{barticle}
\endbibitem

\bibitem{de2012exact}
\begin{barticle}[author]
\bauthor{\bparticle{de} \bsnm{Castro},~\bfnm{Yohann}\binits{Y.}} \AND
  \bauthor{\bsnm{Gamboa},~\bfnm{Fabrice}\binits{F.}}
(\byear{2012}).
\btitle{Exact reconstruction using {B}eurling minimal extrapolation}.
\bjournal{J. Math. Anal. Appl.}
\bvolume{395}
\bpages{336--354}.
\bdoi{10.1016/j.jmaa.2012.05.011}
\bmrnumber{2943626}
\end{barticle}
\endbibitem

\bibitem{duval2015exact}
\begin{barticle}[author]
\bauthor{\bsnm{Duval},~\bfnm{Vincent}\binits{V.}} \AND
  \bauthor{\bsnm{Peyr\'{e}},~\bfnm{Gabriel}\binits{G.}}
(\byear{2015}).
\btitle{Exact support recovery for sparse spikes deconvolution}.
\bjournal{Found. Comput. Math.}
\bvolume{15}
\bpages{1315--1355}.
\bdoi{10.1007/s10208-014-9228-6}
\bmrnumber{3394712}
\end{barticle}
\endbibitem

\bibitem{Ermakov90}
\begin{barticle}[author]
\bauthor{\bsnm{Ermakov},~\bfnm{M.~S.}\binits{M.~S.}}
(\byear{1990}).
\btitle{Minimax detection of a signal in {G}aussian white noise}.
\bjournal{Teor. Veroyatnost. i Primenen.}
\bvolume{35}
\bpages{704--715}.
\bdoi{10.1137/1135098}
\bmrnumber{1090496}
\end{barticle}
\endbibitem

\bibitem{GineNickl}
\begin{bbook}[author]
\bauthor{\bsnm{Gin\'{e}},~\bfnm{Evarist}\binits{E.}} \AND
  \bauthor{\bsnm{Nickl},~\bfnm{Richard}\binits{R.}}
(\byear{2016}).
\btitle{Mathematical foundations of infinite-dimensional statistical models}.
\bseries{Cambridge Series in Statistical and Probabilistic Mathematics, [40]}.
\bpublisher{Cambridge University Press, New York}.
\bdoi{10.1017/CBO9781107337862}
\bmrnumber{3588285}
\end{bbook}
\endbibitem

\bibitem{ingsterbook2003}
\begin{bbook}[author]
\bauthor{\bsnm{Ingster},~\bfnm{Yu.~I.}\binits{Y.~I.}} \AND
  \bauthor{\bsnm{Suslina},~\bfnm{I.~A.}\binits{I.~A.}}
(\byear{2003}).
\btitle{Nonparametric goodness-of-fit testing under {G}aussian models}.
\bseries{Lecture Notes in Statistics}
\bvolume{169}.
\bpublisher{Springer-Verlag, New York}.
\bdoi{10.1007/978-0-387-21580-8}
\bmrnumber{1991446}
\end{bbook}
\endbibitem

\bibitem{Ingster2010}
\begin{barticle}[author]
\bauthor{\bsnm{Ingster},~\bfnm{Yuri~I.}\binits{Y.~I.}},
  \bauthor{\bsnm{Tsybakov},~\bfnm{Alexandre~B.}\binits{A.~B.}} \AND
  \bauthor{\bsnm{Verzelen},~\bfnm{Nicolas}\binits{N.}}
(\byear{2010}).
\btitle{Detection boundary in sparse regression}.
\bjournal{Electron. J. Stat.}
\bvolume{4}
\bpages{1476--1526}.
\bdoi{10.1214/10-EJS589}
\bmrnumber{2747131}
\end{barticle}
\endbibitem

\bibitem{Laurent12}
\begin{barticle}[author]
\bauthor{\bsnm{Laurent},~\bfnm{B\'{e}atrice}\binits{B.}},
  \bauthor{\bsnm{Loubes},~\bfnm{Jean-Michel}\binits{J.-M.}} \AND
  \bauthor{\bsnm{Marteau},~\bfnm{Cl\'{e}ment}\binits{C.}}
(\byear{2012}).
\btitle{Non asymptotic minimax rates of testing in signal detection with
  heterogeneous variances}.
\bjournal{Electron. J. Stat.}
\bvolume{6}
\bpages{91--122}.
\bdoi{10.1214/12-EJS667}
\bmrnumber{2879673}
\end{barticle}
\endbibitem

\bibitem{mallat2008wavelet}
\begin{bbook}[author]
\bauthor{\bsnm{Mallat},~\bfnm{St\'{e}phane}\binits{S.}}
(\byear{2009}).
\btitle{A wavelet tour of signal processing : the sparse way},
\bedition{Third} ed.
\bpublisher{Elsevier/Academic Press, Amsterdam}
\bnote{With contributions from Gabriel Peyr\'{e}}.
\bmrnumber{2479996}
\end{bbook}
\endbibitem

\bibitem{poon2018geometry}
\begin{barticle}[author]
\bauthor{\bsnm{Poon},~\bfnm{Clarice}\binits{C.}},
  \bauthor{\bsnm{Keriven},~\bfnm{Nicolas}\binits{N.}} \AND
  \bauthor{\bsnm{Peyr{\'e}},~\bfnm{Gabriel}\binits{G.}}
(\byear{2021}).
\btitle{The geometry of off-the-grid compressed sensing}.
\bjournal{Foundations of Computational Mathematics}.
\end{barticle}
\endbibitem

\bibitem{tang2014near}
\begin{barticle}[author]
\bauthor{\bsnm{Tang},~\bfnm{Gongguo}\binits{G.}},
  \bauthor{\bsnm{Bhaskar},~\bfnm{Badri~Narayan}\binits{B.~N.}} \AND
  \bauthor{\bsnm{Recht},~\bfnm{Benjamin}\binits{B.}}
(\byear{2015}).
\btitle{Near minimax line spectral estimation}.
\bjournal{IEEE Trans. Inform. Theory}
\bvolume{61}
\bpages{499--512}.
\bdoi{10.1109/TIT.2014.2368122}
\bmrnumber{3299978}
\end{barticle}
\endbibitem

\bibitem{tang_compressed_sensing}
\begin{barticle}[author]
\bauthor{\bsnm{Tang},~\bfnm{Gongguo}\binits{G.}},
  \bauthor{\bsnm{Bhaskar},~\bfnm{Badri~Narayan}\binits{B.~N.}},
  \bauthor{\bsnm{Shah},~\bfnm{Parikshit}\binits{P.}} \AND
  \bauthor{\bsnm{Recht},~\bfnm{Benjamin}\binits{B.}}
(\byear{2013}).
\btitle{Compressed sensing off the grid}.
\bjournal{IEEE Trans. Inform. Theory}
\bvolume{59}
\bpages{7465--7490}.
\bdoi{10.1109/TIT.2013.2277451}
\bmrnumber{3124655}
\end{barticle}
\endbibitem

\bibitem{tropp2004greed}
\begin{barticle}[author]
\bauthor{\bsnm{Tropp},~\bfnm{Joel~A.}\binits{J.~A.}}
(\byear{2004}).
\btitle{Greed is good: algorithmic results for sparse approximation}.
\bjournal{IEEE Trans. Inform. Theory}
\bvolume{50}
\bpages{2231--2242}.
\bdoi{10.1109/TIT.2004.834793}
\bmrnumber{2097044}
\end{barticle}
\endbibitem

\end{thebibliography}


\end{document}